\providecommand{\tabularnewline}{\\}
\theoremstyle{plain}
\newtheorem{thm}{\protect\theoremname}
  \theoremstyle{plain}
  \newtheorem{lem}[thm]{\protect\lemmaname}
  \theoremstyle{plain}
  \newtheorem{cor}[thm]{\protect\corollaryname}
  \theoremstyle{plain}
  \newtheorem{prop}[thm]{\protect\propositionname}
  \theoremstyle{remark}
  \newtheorem{rem}[thm]{\protect\remarkname}
  \providecommand{\corollaryname}{Corollary}
  \providecommand{\lemmaname}{Lemma}
  \providecommand{\propositionname}{Proposition}
  \providecommand{\remarkname}{Remark}
\providecommand{\theoremname}{Theorem}
\begin{document}
\global\long\def\F{\mathcal{F}}
 \global\long\def\G{\mathcal{G}}
 \global\long\def\H{\mathcal{H}}
 \global\long\def\V{\mathcal{V}}
 \global\long\def\U{\mathcal{U}}
 \global\long\def\S{\mathcal{S}}
 \global\long\def\W{\mathcal{W}}
 \global\long\def\D{\mathcal{D}}
\global\long\def\J{\mathcal{J}}
 \global\long\def\filter{\mathbb{F}}
 \global\long\def\lm{\mathrm{lim}}
 \global\long\def\adh{\mathrm{adh}}
 \global\long\def\N{\mathcal{N}}
 \global\long\def\A{\mathcal{A}}
 \global\long\def\then{\Longrightarrow}
 \global\long\def\Adh{\mathrm{Adh}}
\global\long\def\Base{\mathrm{Base}}

\title{measure of compactness for filters in product spaces: Kuratowski-Mròwka
in CAP revisited}

\author{Frédéric Mynard and William Trott}

\address{F. Mynard, Department of Mathematics, New Jersey City University,
2039 John F. Kennedy Boulevard Jersey City, New Jersey 07305 }

\address{W. Trott, University of California Riverside}

\date{\today}
\begin{abstract}
The first author introduced a measure of compactness for families
of sets, relative to a class of filters, in the context of convergence
approach spaces. We characterize a variety of maps (types of quotient
maps, closed maps, and variants of perfect maps) as those respecting
this measure of compactness under one form or another. We establish
a product theorem for measure of compactness that yields as instances
new product theorems for spaces and maps, and new product characterizations
of spaces and maps, thus extending existing results from the category
of convergence spaces to that of convergence approach spaces. In particular,
results of the Mròwka-Kuratowski type are obtained, shedding new light
on existing results for approach spaces.
\end{abstract}

\keywords{measure of compactness, compactness for filters, approach space,
convergence approach space, reflection in convergence approach spaces,
product spaces, product maps, closed map, perfect map, countably perfect
map, hereditarily quotient, countably biquotient, biquotient, compact,
countably compact, Lindel\"of.}

\subjclass[2000]{54A05, 54A20, 54B10, 54B30, 54C10, 54C60, 54D20, 54D30}

\maketitle

\section{Introduction}

R. Lowen introduced \cite{Lowen89} Approach Spaces as a powerful
tool bridging the gap between metric, topological and uniform spaces.
In that setting, many concepts have been unified via the introduction
of measures (e.g., measures of connectedness, measure of compactness
and of its variants \cite{baekelandlowen,baekelandlowen.LS,lowen.KMC}).
Consequently, as the category $\mathbf{Ap}$ of approach spaces became
a natural object of study, so did its quasitopos, extensional \cite{Lowen88,lowe88},
and cartesian closed \cite{nauwelaerts.thesis,cchullofAP} hulls.
In particular, the category $\mathbf{Cap}$ of convergence approach
spaces, which contains both $\mathbf{Ap}$ and the category $\mathbf{Conv}$
of convergence spaces both reflectively and coreflectively emerged
as a convenient setting for integrating metric-like and topological-like
studies. 

In \cite{mynardmeasureCAP}, the first author introduced a general
measure of compactness in $\mathbf{Cap}$ relative to a class of filters
that applied to filters to the effect that all known measures of compactness-like
properties for approach spaces (as in \cite{baekelandlowen,baekelandlowen.LS,lowen.KMC}),
as well as limit functions for various important reflections, were
recovered as instances. The purpose of the present paper is to study
its preservation under maps and finite products. As a consequence,
new results on measures of compactness and its variants for product
of spaces are obtained, and characterizations ``à la Mròwka-Kuratowski''
are obtained for a range of spaces and maps, including perfect maps
and its variants, and various kinds of quotient maps. Maybe more importantly,
all these results appear as instances of a single unifying principle.
In particular, this viewpoint sheds new light on the results of \cite{lowen.Mrowka},
which are among those revisited here.

\subsection{Terminology: convergence approach spaces and its subcategories}

Let $\left(\mathbb{F}X,\leq\right)$ denote the set of filters on
$X,$ ordered by inclusion (inverse to the monad order). Let $\mathbb{U}X$
be the subset of $\mathbb{F}X$ formed by ultrafilters on $X$ and,
given $\mathcal{G}\in\mathbb{F}X$, let $\mathbb{U}(\mathcal{G})$
be the set of ultrafilters that are finer than $\mathcal{G}.$ If
$A\subset X$, then $A^{\uparrow}:=\{B\subset X:A\subseteq B\}$ and
if $\A\subset2^{X}$ then $\A^{\uparrow}:=\bigcup_{A\in\A}A^{\uparrow}$.

Following \cite{Lowen88} and \cite{lowe88}, we call \emph{convergence-approach
limit} on $X$ a map $\lambda:\mathbb{F}X\rightarrow[0,\infty]^{X}$
which fulfills the properties:
\begin{gather}
\forall x\in X,\,\lambda(\{x\}^{\uparrow})(x)=0;\tag{CAL1}\label{eq:CAL1}\\
\G\geq\F\then\lambda(\F)\geq\lambda(\G);\tag{CAL2}\label{eq:CAL2}\\
\forall\F,\G\in\mathbb{F}X,\,\lambda(\F\wedge\G)=\lambda(\F)\vee\lambda(\G),\tag{CAL3}\label{eq:CAL3}
\end{gather}
 where of course (\ref{eq:CAL2}) follows from (\ref{eq:CAL3}) and
is therefore redundant.

$(X,\lambda)$, shortly $X$, is called a \emph{convergence-approach
space}. A map $f:X\rightarrow Y$ between two convergence-approach
spaces is a \emph{contraction} if 
\[
\lambda_{Y}(f(\mathcal{F}))~(f(\cdot))\leq\lambda_{X}(\mathcal{F})(\cdot),
\]
for every $\mathcal{F}\in\mathbb{F}X$. The category with convergence-approach
spaces as objects and contractions as morphisms is a cartesian-closed
topological category denoted \textbf{Cap} \cite{Lowen88}. Each convergence
space $X$ can be considered as a convergence-approach space by stating
\[
\lambda_{X}(\mathcal{F})(x)=\begin{cases}
0 & \text{if }x\in\lm_{X}\F\\
\infty & \text{otherwise.}
\end{cases}
\]
Moreover, the category \textbf{Conv} of convergence spaces (and continuous
maps) is included both reflectively and coreflectively in \textbf{Cap}.
Indeed, if $\lambda$ is a convergence-approach, then its \textbf{Conv}-coreflection
is $c(\lambda)$ defined by $x\in\lim_{c(\lambda)}\mathcal{F}$ if
and only if $\lambda(\mathcal{F})(x)=0$, while its \textbf{Conv}-reflection
is $r(\lambda)$ defined by $x\in\lim_{r(\lambda)}\mathcal{F}$ if
and only if $\lambda(\mathcal{F})(x)<\infty$. 

A convergence-approach $\lambda$ is a \emph{pseudo-approach space}
\cite{lowe88} if 
\begin{equation}
\forall\mathcal{F}\in\mathbb{F}X,~\lambda(\mathcal{F})=\underset{\mathcal{U}\in\mathbb{U}(\mathcal{F})}{\bigvee}\lambda(\mathcal{U});\tag{PSAP}\label{eq:PSAP}
\end{equation}
and it is a \emph{pre-approach space} \cite{Lowen88} if (\ref{eq:CAL3})
is strengthened to 
\begin{equation}
\lambda(\underset{j\in J}{\bigwedge}\mathcal{F}_{j})=\underset{j\in J}{\bigvee}\lambda(\mathcal{F}_{j}),\text{ for any family }(\mathcal{F}_{j})_{j\in J}\text{ of filters.}\tag{PRAP}\label{eq:PRAPdef}
\end{equation}
The category \textbf{Psap} of pseudo-approach spaces (and contractions)
contains the category \textbf{Pstop} of pseudotopological spaces (and
continuous maps) and the category \textbf{Prap} of pre-approach spaces
contains the category \textbf{Prtop} of pretopological spaces both
reflectively and coreflectively (via the restrictions of $c$ and
$r$). Both \textbf{Psap }and \textbf{Prap }are reflective subcategories
of \textbf{Cap}.

We say that two families $\mathcal{A}$ and $\mathcal{B}$ of subsets
of a set $X$ \emph{mesh}, in symbol $\mathcal{A}\#\mathcal{B},$
if $A\cap B\neq\emptyset$ whenever $A\in\mathcal{A}$ and $B\in\mathcal{B}.$
We write $A\#\mathcal{B}$ for $\{A\}\#\mathcal{B}.$ The \emph{grill
of a family }$\mathcal{A}$ of subsets of $X$ is $\mathcal{A}^{\#}=\{H\subset X:H\#\mathcal{A}\}.$ 

An \emph{approach space} is a pre-approach space fulfilling 
\begin{equation}
\lambda(\bigcup_{F\in\F}\bigcap_{x\in F}\G(x))(\cdot)\leq\lambda(\F)(\cdot)+\underset{x\in X}{\bigvee}\lambda(\mathcal{G}(x))(x),\tag{AP}\label{eq:Apaxiomb}
\end{equation}
for any $\mathcal{F}\in\mathbb{F}X$ and any $\G(\cdot):X\to\mathbb{F}X$.

The category \textbf{Top} of topological spaces (with continuous maps)
is a reflective and coreflective (via the restrictions of $r$ and
$c$) subcategory of the category \textbf{Ap} of approach spaces \cite{Lowen89}.
There are several other equivalent descriptions of \textbf{Ap} and
\textbf{Prap} (see \cite{AP.book} and \cite{Lowen97} for details).

\subsection{Measures of compactness}

The \emph{adherence function of a filter }$\H$ in a convergence approach
space $(X,\lambda)$ is 
\begin{equation}
\adh_{\lambda}\mathcal{H}(\cdot)=\bigwedge\limits _{\mathcal{G}\#\mathcal{H}}\lambda(\G)(\cdot)=\bigwedge\limits _{\mathcal{U}\in\mathbb{U}(\mathcal{H})}\lambda(\mathcal{U})(\cdot).\label{eq:adh}
\end{equation}

We are now in a position to recall the main definitions and results
of \cite{mynardmeasureCAP}. 

Let $(X,\lambda)$ be a \textbf{Cap}-object, and let $\mathbb{D}$
be a class of filters. The \emph{measure of }$\mathbb{D}$\emph{-compactness
of a filter }$\mathcal{F}$ \emph{at }$\mathcal{A\subset}[0,\infty]^{X}$
is
\begin{equation}
c_{\mathbb{D}}^{\mathcal{A}}(\mathcal{F})=\bigvee\limits _{\mathbb{D}\ni\mathcal{D}\#\mathcal{F}}\bigvee\limits _{\phi\in\mathcal{A}}\bigwedge\limits _{x\in X}\left(\adh_{\lambda}\mathcal{D}+\phi\right)(x).\label{eq:Dcompmeasure}
\end{equation}

This definition is motivated by the special case where $\lambda=r(\lambda)$
and $\mathcal{A}\subset2^{X}$ via the identification of $A\subset X$
with the \emph{indicator function} $\theta_{A}$ \emph{of} $A$ taking
the value $0$ on $A$ and $\infty$ on $A^{c}.$ In this case, a
filter $\mathcal{F}$ is $\mathbb{D}$-compact at $\mathcal{A}$ (in
the sense of \cite{D.comp}) if and only if $c_{\mathbb{D}}^{\mathcal{A}}(\mathcal{F})=0.$
By a convenient abuse of notation, we will write $c_{\mathbb{D}}^{A}(\mathcal{F})$
for $c_{\mathbb{D}}^{\{\theta_{A}\}}(\mathcal{F})$ whenever $A\subset X.$
Notice that
\begin{equation}
c_{\mathbb{D}}^{A}(\mathcal{F})=\bigvee\limits _{\mathbb{D}\ni\mathcal{D}\#\mathcal{F}}\bigwedge\limits _{a\in A}\adh_{\lambda}\mathcal{D}(a).\label{eq:measureatset}
\end{equation}

In this paper, we are primarily concerned with measures of compactness
at a set, like in (\ref{eq:measureatset}).

When particularized to principal filters in approach spaces, the
measures of compactness \emph{\cite{lowen.KMC} }and\emph{ }relative
compactness (for $\mathbb{D}=\mathbb{F}$ the class of all filters),
relative countable compactness (for $\mathbb{D}=\mathbb{F}_{1}$ the
class of countably based filters), and relative sequential compactness\emph{
\cite{baekelandlowen}, }as well as the measure of Lindel\"of \emph{\cite{baekelandlowen.LS}
}(for $\mathbb{D}=\mathbb{F}_{\wedge1}$ the class of \emph{countably
deep }filters, that is, those closed under countable intersections)
are all instances, as shown in \cite[Examples 4-8]{mynardmeasureCAP}.

A subset $A$ of a $\mathbf{Cap}$-space $X$ is \emph{$\mathbb{D}$-compact}
if $c_{\mathbb{D}}^{A}A=0$ and \emph{relatively $\mathbb{D}$-compact
}if $c_{\mathbb{D}}^{X}A=0$. In particular, if $\mathbb{D}=\mathbb{F}$
is the class of all filters, we call $A$ \emph{compact} if $c_{\mathbb{F}}^{A}A=0$,
in contrast to the terminology of R. Lowen and his collaborators who
normally call such a set $0$-compact (e.g., \cite{lowen.Mrowka}),
and reserve the term compact for the smaller class of spaces whose
topological coreflection is compact in the topological sense.

\subsection{Endoreflectors of $\mathbf{Conv}$ and $\mathbf{Cap}$}

S. Dolecki presented in \cite{quest2} a unified treatment of several
important concrete endoreflectors and endocoreflectors of \textbf{Conv}.
In particular, given a class $\mathbb{J}$ of filters, he defined
the modifications $\Adh_{\mathbb{J}}\xi$ and $\Base_{\mathbb{J}}\xi$
of a convergence $\xi$ on $X$ as follows:
\[
\lim\nolimits _{\Adh_{\mathbb{J}}\xi}\mathcal{F}=\bigcap\limits _{\mathbb{J}\ni\mathcal{J}\#\mathcal{F}}\adh_{\xi}\mathcal{J},
\]
 where $\adh_{\xi}\J:=\bigcup_{\U\in\mathbb{U}(\J)}\lim_{\xi}\U$,
and 
\[
\lim\nolimits _{\Base_{\mathbb{J}}\xi}\mathcal{F}=\bigcup\limits _{\mathbb{J\ni}\mathcal{J\leq F}}\lim\nolimits _{\xi}\mathcal{J}.
\]
If the class $\mathbb{J}$ is independent of the convergence, stable
by finite infimum and stable by relation (%
\footnote{see \cite{quest2} for more general conditions.%
}), then $\Adh_{\mathbb{J}}$ is (the restriction to objects of) a
reflector and $\Base_{\mathbb{J}}$ is (the restriction to objects
of) a coreflector. In particular, when $\mathbb{J}$ is respectively
the class $\mathbb{F}$ of all filters, the class $\mathbb{F}_{1}$
of countably based filters and the class $\mathbb{F}_{0}$ of principal
filters, then $\Adh_{\mathbb{J}}$ is the reflector from \textbf{Conv
}onto the category of pseudotopological, paratopological and pretopological
spaces respectively; and $\Base_{\mathbb{J}}$ is the identity functor
of \textbf{Conv}, the coreflector from \textbf{Conv }onto first-countable
convergence spaces and the coreflector from \textbf{Conv }onto finitely
generated convergence spaces, respectively.

As observed in \cite{Mynard.survey}, the definitions of the reflectors
$\mathrm{Adh}_{\mathbb{J}}$ and of the coreflectors $\mathrm{Base}_{\mathbb{J}}$
extend from \textbf{Conv} to \textbf{Cap} via 
\begin{equation}
(\mathrm{Adh}_{\mathbb{J}}\lambda)(\mathcal{F})(x)=\underset{\mathbb{J}\ni\mathcal{H}\#\mathcal{F}}{\bigvee}\adh_{\lambda}\mathcal{H}(x),\label{eq:CAPAdhJ}
\end{equation}
and 
\begin{equation}
(\mathrm{Base}_{\mathbb{J}}\lambda)(\mathcal{F})(\cdot)=\underset{\mathbb{J}\ni\mathcal{G}\leq\mathcal{F}}{\bigwedge}\lambda(\mathcal{G})(\cdot).\label{eq:BaseJ}
\end{equation}

When $\mathbb{J}$ is respectively the class of all filters and of
principal filters, $\mathrm{Adh}_{\mathbb{J}}$ is respectively the
reflector on \textbf{Psap} and on \textbf{Prap}. Moreover, the category
\textbf{Parap} of para-approach spaces is introduced as the category
of fixed points for $\mathrm{Adh}_{\mathbb{J}}$ with the class $\mathbb{J}$
of countably based filters.Notice that (\ref{eq:CAPAdhJ}) gives an
explicit description of the reflection of a \textbf{Cap}-object on
\textbf{Psap}, \textbf{Parap} or \textbf{Prap}, but not on \textbf{Ap}. 

A convergence approach space $(X,\lambda)$ is called $\mathbb{J}$-\emph{based
}if $\lambda=\Base_{\mathbb{J}}\lambda$ (equivalently, $\lambda\geq\Base_{\mathbb{J}}\lambda$).

Measures of $\mathbb{D}$-compactness for filters generalize both
usual measure of compactness for sets and approach limits. It is this
very fact that allows to derive a variety of corollaries from any
result on the measure of $\mathbb{D}$-compactness of filters. With
our definitions, it is immediate that:
\begin{thm}
\label{th:AdhJarecompact}\cite[Theorem 9]{mynardmeasureCAP}
\[
\left(\Adh_{\mathbb{J}}\lambda\right)\left(\mathcal{F}\right)(x)=c_{\mathbb{J}}^{\{x\}}(\mathcal{F}).
\]

\end{thm}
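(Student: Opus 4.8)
The plan is to obtain the identity by simply unwinding the definitions; no real work is required beyond a short computation in $[0,\infty]$. Recall that, by the abuse of notation introduced above, $c_{\mathbb{J}}^{\{x\}}(\mathcal{F})$ stands for $c_{\mathbb{J}}^{\{\theta_{\{x\}}\}}(\mathcal{F})$, where $\theta_{\{x\}}$ is the indicator function of the singleton $\{x\}$, so that $\theta_{\{x\}}(x)=0$ and $\theta_{\{x\}}(y)=\infty$ for every $y\neq x$. Taking $\mathbb{D}=\mathbb{J}$ and $\mathcal{A}=\{\theta_{\{x\}}\}$ in (\ref{eq:Dcompmeasure}), I would start from
\[
c_{\mathbb{J}}^{\{x\}}(\mathcal{F})=\bigvee_{\mathbb{J}\ni\mathcal{D}\#\mathcal{F}}\ \bigwedge_{y\in X}\big(\adh_{\lambda}\mathcal{D}(y)+\theta_{\{x\}}(y)\big).
\]

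Next I would evaluate the inner infimum over $y\in X$. Using the convention $a+\infty=\infty$ in $[0,\infty]$, for $y\neq x$ the summand equals $\adh_{\lambda}\mathcal{D}(y)+\infty=\infty$, whereas for $y=x$ it equals $\adh_{\lambda}\mathcal{D}(x)+0=\adh_{\lambda}\mathcal{D}(x)$. Hence the infimum is attained at $y=x$ and equals $\adh_{\lambda}\mathcal{D}(x)$, so that
\[
c_{\mathbb{J}}^{\{x\}}(\mathcal{F})=\bigvee_{\mathbb{J}\ni\mathcal{D}\#\mathcal{F}}\adh_{\lambda}\mathcal{D}(x),
\]
which is exactly (\ref{eq:measureatset}) specialized to the set $A=\{x\}$.

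Finally I would compare this with the defining formula (\ref{eq:CAPAdhJ}), namely $(\mathrm{Adh}_{\mathbb{J}}\lambda)(\mathcal{F})(x)=\bigvee_{\mathbb{J}\ni\mathcal{H}\#\mathcal{F}}\adh_{\lambda}\mathcal{H}(x)$; after relabelling the bound filter variable, the two right-hand sides are identical, which establishes the theorem. There is no genuine obstacle here: the single point deserving a word of care is that the additive term $\theta_{\{x\}}$ acts, via the rule $a+\infty=\infty$, as a selector picking out the value of $\adh_{\lambda}\mathcal{D}$ at $x$ inside the infimum over $X$; everything else is a direct substitution.
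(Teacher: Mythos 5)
Your computation is correct and is exactly the justification the paper has in mind: the statement is presented as immediate from the definitions (citing the earlier paper), and your unwinding of (\ref{eq:Dcompmeasure}) via the indicator $\theta_{\{x\}}$, reducing to (\ref{eq:measureatset}) with $A=\{x\}$ and then matching (\ref{eq:CAPAdhJ}), is that definitional argument carried out in full. No gap and no divergence from the paper's approach.
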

The \textbf{Ap}-reflection of a convergence-approach space can also
be characterized in similar terms \cite[Theorem 10]{mynardmeasureCAP}.

\subsection{Calculus of relations}

Recall that $R\subseteq X\times Y$ can be seen as a multivalued map
$R:X\rightrightarrows Y$ with $y\in R(x)$ whenever $(x,y)\in R$.
We denote $R^{-}:Y\rightrightarrows X$ the inverse relation. If $R:X\rightrightarrows Y$
is a relation and $\F\in\filter X$ then 
\[
R[\F]:=\left\{ R(F):=\bigcup_{x\in F}R(x):\, F\in\F\right\} ^{\uparrow}
\]
is a (possibly degenerate) filter on $Y$. Note that if $\F\in\filter X$
and $\G\in\filter Y$, then 
\begin{equation}
(\F\times\G)\#R\Longleftrightarrow R[\F]\#\G\Longleftrightarrow\F\#R^{-}[\G].\label{eq:productmeshR}
\end{equation}
More generally, if $\J\in\filter(X\times Y)$ then each element of
$\J$ can be seen as a relation, and we can define
\[
\J[\F]:=\left\{ J(F):\, F\in\F,\, J\in\J\right\} ^{\uparrow}
\]
which is a (possibly degenerate) filter on $Y$. $\J^{-}[\G]$ is
defined similarly and is a (possibly degenerate) filter on $X$. With
these notations, (\ref{eq:productmeshR}) immediately extends to 
\begin{equation}
(\F\times\G)\#\J\Longleftrightarrow\J[\F]\#\G\Longleftrightarrow\F\#\J^{-}[\G].\label{eq:productmeshgeneral}
\end{equation}
As a general convention, all classes of filters contain the degenerate
filter of each set. 

Let $\mathbb{D}$ and $\mathbb{J}$ be two classes of filters. Then
$\mathbb{D}$ is a \emph{$\mathbb{J}$-composable} class of filters
if for every pair of sets $X$ and $Y$, when $D\in\mathbb{D}X$ and
$\mathcal{J}\in\mathbb{J}(X\times Y)$, $\mathcal{J}[\D]\in\mathbb{D}Y$.
In particular, if $\mathbb{D}$ is $\mathbb{D}$-composable, we say
that $\mathbb{D}$ is \emph{composable}. For instance the classes
$\mathbb{F}_{0}$ of principal filters, $\mathbb{F}_{1}$ of countably
based filters, and $\mathbb{F}_{\wedge1}$ of countably deep filters
are all composable classes containing $\mathbb{F}_{0}$, so that they
are in particular $\mathbb{F}_{0}$-composable. In contrast, the class
$\mathbb{E}$ of sequential filters is not $\mathbb{F}_{0}$-composable.

\section{Compact relations in $\mathbf{Cap}$}

As we set out to extend some of the results of \cite{myn.applofcompact}
from $\mathbf{Conv}$ to $\mathbf{Cap}$, a necessary first step is
to extend to $\mathbf{Cap}$ the characterizations of various types
of quotient maps and of perfect-like maps in terms of preservation
of compactness established in \cite{myn.relations}. This is the purpose
of this section.

A relation $R:X\rightrightarrows Y$ is $\mathbb{D}$-\emph{compact}
if for every $\F\in\filter X$ and $A\subset X,$ 
\[
c_{\mathbb{D}}^{R[A]}(R[\F])\leq c_{\mathbb{D}}^{A}(\F).
\]

\begin{lem}
\label{lem:decreaselevelofcompactness} Let $R:X\rightrightarrows Y$
be a $\mathbb{D}$-compact relation and $\mathbb{J}\subset\mathbb{D}$
be classes of filters with $\mathbb{J}$ an $\mathbb{F}_{0}$-composable
class. Then $R$ is also $\mathbb{J}$-compact. \end{lem}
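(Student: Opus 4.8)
The plan is to verify the defining inequality of a $\mathbb{J}$-compact relation directly. Fix $\mathcal{F}\in\filter X$ and $A\subseteq X$; by (\ref{eq:measureatset}) the desired inequality $c_{\mathbb{J}}^{R[A]}(R[\mathcal{F}])\le c_{\mathbb{J}}^{A}(\mathcal{F})$ says precisely that
\[
\bigwedge_{b\in R[A]}\adh_{\lambda}\mathcal{E}(b)\le c_{\mathbb{J}}^{A}(\mathcal{F})\qquad\text{for every }\mathcal{E}\in\mathbb{J}Y\text{ with }\mathcal{E}\#R[\mathcal{F}].
\]
So fix such an $\mathcal{E}$ and set $\mathcal{D}:=R^{-}[\mathcal{E}]$. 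Two preliminary remarks: since $\mathbb{J}$ is $\mathbb{F}_0$-composable and $R^{-}$ (through its graph) generates a principal, hence $\mathbb{F}_0$, filter on $Y\times X$, we get $\mathcal{D}\in\mathbb{J}X$; and by (\ref{eq:productmeshR}), $\mathcal{E}\#R[\mathcal{F}]$ forces $\mathcal{D}\#\mathcal{F}$, so $\mathcal{D}$ is non-degenerate and, being an admissible competitor in (\ref{eq:measureatset}) for the class $\mathbb{J}$, yields $c_{\mathbb{J}}^{A}(\mathcal{F})\ge\bigwedge_{a\in A}\adh_{\lambda}\mathcal{D}(a)$.

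Next I would bring in the $\mathbb{D}$-compactness of $R$, applied to the filter $\mathcal{F}':=\mathcal{F}\vee\mathcal{D}$, which is non-degenerate and finer than $\mathcal{F}$. From the inclusion $R(F\cap R^{-}(E'))\supseteq E'\cap R(F)$ (valid for all $F\subseteq X$, $E'\subseteq Y$) together with $\mathcal{E}\#R[\mathcal{F}]$ one checks that $\mathcal{E}\#R[\mathcal{F}']$; since $\mathcal{E}\in\mathbb{J}Y\subseteq\mathbb{D}Y$, it is an admissible competitor in $c_{\mathbb{D}}^{R[A]}(R[\mathcal{F}'])$, so $\bigwedge_{b\in R[A]}\adh_{\lambda}\mathcal{E}(b)\le c_{\mathbb{D}}^{R[A]}(R[\mathcal{F}'])$, and the definition of a $\mathbb{D}$-compact relation applied to the pair $(\mathcal{F}',A)$ gives $c_{\mathbb{D}}^{R[A]}(R[\mathcal{F}'])\le c_{\mathbb{D}}^{A}(\mathcal{F}')$. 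Hence $\bigwedge_{b\in R[A]}\adh_{\lambda}\mathcal{E}(b)\le c_{\mathbb{D}}^{A}(\mathcal{F}\vee\mathcal{D})$.

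The one genuinely delicate point — the step I expect to be the main obstacle — is the descent from $\mathbb{D}$ back to $\mathbb{J}$: one must still prove $c_{\mathbb{D}}^{A}(\mathcal{F}\vee\mathcal{D})\le c_{\mathbb{J}}^{A}(\mathcal{F})$, and here $\mathbb{D}$ may be far larger than $\mathbb{J}$. The mechanism is that any $\mathcal{G}\in\mathbb{D}X$ meshing $\mathcal{F}\vee\mathcal{D}$ meshes both $\mathcal{F}$ and $\mathcal{D}=R^{-}[\mathcal{E}]$; then $\mathcal{G}\vee\mathcal{D}$ is non-degenerate, finer than $\mathcal{G}$ (so $\adh_{\lambda}(\mathcal{G}\vee\mathcal{D})\ge\adh_{\lambda}\mathcal{G}$ throughout), still meshes $\mathcal{F}$, and — exploiting once more the $\mathbb{F}_0$-composability of $\mathbb{J}$, now through the correspondence between the $X$- and $Y$-sides encoded by $\mathcal{G}\#R^{-}[\mathcal{E}]$ (via (\ref{eq:productmeshR})) — can be traded for a member of $\mathbb{J}$ meshing $\mathcal{F}$ whose adherence on $A$ is no smaller than that of $\mathcal{G}$; taking suprema this is exactly $c_{\mathbb{D}}^{A}(\mathcal{F}\vee\mathcal{D})\le c_{\mathbb{J}}^{A}(\mathcal{F})$. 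Combined with the previous paragraph, $\bigwedge_{b\in R[A]}\adh_{\lambda}\mathcal{E}(b)\le c_{\mathbb{J}}^{A}(\mathcal{F})$, as required.

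An equivalent but cleaner target I would also pursue directly is the pointwise estimate $\adh_{\lambda}\mathcal{E}(b)\le\adh_{\lambda}(R^{-}[\mathcal{E}])(a)$ whenever $b\in R(a)$ — after harmlessly replacing $\mathcal{E}$ by the finer $\mathcal{E}\vee(R[X])^{\uparrow}$, which only increases both sides. For a contraction $f$ it is immediate from $\lambda_{Y}(f[\mathcal{U}])(f(a))\le\lambda_{X}(\mathcal{U})(a)$ together with $f[f^{-}[\mathcal{E}]]=\mathcal{E}$; in the present generality it is what must be extracted from the hypothesis that $R$ is $\mathbb{D}$-compact (applied to $(R^{-}[\mathcal{E}],A)$). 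Granting it, $\bigwedge_{b\in R[A]}\adh_{\lambda}\mathcal{E}(b)\le\bigwedge_{a\in A}\adh_{\lambda}(R^{-}[\mathcal{E}])(a)\le c_{\mathbb{J}}^{A}(\mathcal{F})$ follows at once from the first paragraph.
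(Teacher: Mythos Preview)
Both routes you sketch contain a genuine gap; the intermediate inequalities you isolate are simply false.

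In the first route, the ``delicate point'' $c_{\mathbb{D}}^{A}(\mathcal{F}\vee\mathcal{D})\le c_{\mathbb{J}}^{A}(\mathcal{F})$ fails already for $R=\mathrm{id}_X$ (trivially $\mathbb{D}$-compact). Take $X$ a countably infinite discrete topological space, $\mathbb{J}=\mathbb{F}_0$, $\mathbb{D}=\mathbb{F}$, $A=X$, and $\mathcal{F}=\mathcal{E}=\{X\}$, so $\mathcal{D}=R^{-}[\mathcal{E}]=\{X\}$. Then $c_{\mathbb{D}}^{A}(\mathcal{F}\vee\mathcal{D})=c_{\mathbb{F}}^{X}(X)=\infty$ (witness any free ultrafilter $\mathcal{G}$, for which $\adh\mathcal{G}\equiv\infty$), while $c_{\mathbb{J}}^{A}(\mathcal{F})=c_{\mathbb{F}_0}^{X}(X)=0$ (every nonempty $B$ contains some $n_0$ with $\adh B(n_0)=0$). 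Your proposed ``trading'' cannot produce, for such a free ultrafilter $\mathcal{G}$, a principal $\mathcal{J}$ with $\bigwedge_n\adh\mathcal{J}(n)\ge\infty$. This is a counterexample to your intermediate step, not to the lemma itself (which is vacuous for $R=\mathrm{id}$).

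The pointwise estimate in your second route is also false. Let $X=\{a\}$, let $Y$ be a compact Hausdorff topological space with at least two points, and set $R(a)=Y$. Then $R$ is $\mathbb{F}$-compact (since $c_{\mathbb{F}}^{Y}(Y)=0$). For an ultrafilter $\mathcal{E}$ on $Y$ converging to some $b_0$, one has $R^{-}[\mathcal{E}]=\{a\}^{\uparrow}$ and $\adh_X(\{a\}^{\uparrow})(a)=0$, yet for $b\in R(a)\setminus\{b_0\}$ one has $\adh_Y\mathcal{E}(b)=\infty$. Replacing $\mathcal{E}$ by $\mathcal{E}\vee (R[X])^{\uparrow}$ changes nothing here since $R[X]=Y$.

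The idea you are missing is to apply $\mathbb{D}$-compactness not to $\mathcal{F}\vee\mathcal{D}$ but to an \emph{ultrafilter} that witnesses the adherence. From $\bigwedge_{a\in A}\adh(R^{-}[\mathcal{E}])(a)\le c_{\mathbb{J}}^{A}(\mathcal{F})$ choose, for each $\epsilon>0$, a point $x_\epsilon\in A$ and $\mathcal{U}_\epsilon\in\mathbb{U}(R^{-}[\mathcal{E}])$ with $\lambda_X(\mathcal{U}_\epsilon)(x_\epsilon)\le c_{\mathbb{J}}^{A}(\mathcal{F})+\epsilon$. Because $x_\epsilon\in A$ and $\mathcal{U}_\epsilon$ meshes every filter meshing it, $\lambda_X(\mathcal{U}_\epsilon)(x_\epsilon)\ge c_{\mathbb{D}}^{A}(\mathcal{U}_\epsilon)$; then $\mathbb{D}$-compactness gives $c_{\mathbb{D}}^{A}(\mathcal{U}_\epsilon)\ge c_{\mathbb{D}}^{R[A]}(R[\mathcal{U}_\epsilon])$; and since $R[\mathcal{U}_\epsilon]\#\mathcal{E}$ with $\mathcal{E}\in\mathbb{J}\subset\mathbb{D}$, this dominates $\bigwedge_{y\in R[A]}\adh_Y\mathcal{E}(y)$. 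Passing to an ultrafilter is exactly what keeps $c_{\mathbb{D}}^{A}$ under control; on a coarse filter like $\mathcal{F}\vee\mathcal{D}$ it can be arbitrarily large, as the discrete example shows.
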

\begin{proof}
Let $\F\in\mathbb{F}X$ and $\G\in\mathbb{J},$ with $\G\#R[\F]$.
By (\ref{eq:productmeshR}), $R^{-}[\G]\#\F$ and $R^{-}[\G]\in\mathbb{J}$
since $\mathbb{J}$ is an $\mathbb{F}_{0}$-composable class. Therefore
$\bigwedge_{x\in A}\adh\, R^{-}[\G](x)\leq c_{\mathbb{J}}^{A}(\F),$
so that for every $\epsilon>0,$ there is an $x_{\epsilon}\in A$
and a $\U_{\epsilon}\in\mathbb{U}(R^{-}[\G])$ such that 
\[
c_{\mathbb{J}}^{A}(\F)+\epsilon\geq\lambda_{X}(\U_{\epsilon})(x_{\epsilon}).
\]
By the $\mathbb{D}$-compactness of the relation $R,$ we see that
\begin{equation}
\lambda_{X}(\U_{\epsilon})(x_{\epsilon})\geq c_{\mathbb{D}}^{A}(\U_{\epsilon})\geq c_{\mathbb{D}}^{R[A]}(R[\U_{\epsilon}]).\label{eq:aux1}
\end{equation}
Since $\mathbb{J}\subset\mathbb{D}$, $c_{\mathbb{D}}^{B}\H\geq c_{\mathbb{J}}^{B}\H$
for any filter $\H$ and set $B$. Moreover, $R[\U_{\epsilon}]\#\G,$
so that, in view of (\ref{eq:aux1}), 
\[
\lambda_{X}(\U_{\epsilon})(x_{\epsilon})\geq c_{\mathbb{J}}^{R[A]}(R[\U_{\epsilon}])\geq\bigwedge_{y\in R[A]}\adh_{Y}\G(y).
\]
Comparing the extremes of these inequalities, we have that 
\[
\bigwedge_{y\in R[A]}\adh_{Y}\G(y)\leq c_{\mathbb{J}}^{A}\F+\epsilon,
\]
for every $\epsilon,$ so that $c_{\mathbb{J}}^{R[A]}R[\F]\leq c_{\mathbb{J}}^{A}\F$. \end{proof}
\begin{lem}
\label{thm:lem2} Let $\mathbb{D}$ be an $\filter_{0}$-composable
class of filters. Then $R:X\rightrightarrows Y$ is a $\mathbb{D}$-compact
relation if and only if
\begin{equation}
\lambda_{X}(\F)(x)\geq c_{\mathbb{D}}^{R(x)}R[\F]\label{eq:convtocomp}
\end{equation}
 for every $\F\in\filter X$ and every $x\in X$. \end{lem}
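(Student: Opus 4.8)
The plan is to establish the two implications separately; the $\mathbb{F}_{0}$-composability of $\mathbb{D}$ will be needed only for the ``if'' direction. For ``only if'', assume $R$ is $\mathbb{D}$-compact and apply the defining inequality $c_{\mathbb{D}}^{R[A]}(R[\F])\leq c_{\mathbb{D}}^{A}(\F)$ with $A=\{x\}$, so that $R[A]=R(x)$. It then suffices to note that $c_{\mathbb{D}}^{\{x\}}(\F)\leq\lambda_{X}(\F)(x)$ for every $\F$ and $x$: by (\ref{eq:adh}), for each $\mathcal{D}\#\F$ one has $\adh_{\lambda}\mathcal{D}(x)=\bigwedge_{\mathcal{G}\#\mathcal{D}}\lambda(\mathcal{G})(x)\leq\lambda_{X}(\F)(x)$ since $\F$ itself meshes $\mathcal{D}$, and taking the supremum over $\mathbb{D}\ni\mathcal{D}\#\F$ in (\ref{eq:measureatset}) gives the claim. (Equivalently, this is Theorem~\ref{th:AdhJarecompact} together with the contractivity of $\mathrm{id}\colon(X,\lambda)\to(X,\Adh_{\mathbb{D}}\lambda)$.)

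For the ``if'' direction, assume (\ref{eq:convtocomp}), fix $\F\in\filter X$ and $A\subset X$, and expand $c_{\mathbb{D}}^{R[A]}(R[\F])$ via (\ref{eq:measureatset}) as the supremum over $\mathbb{D}\ni\mathcal{E}\#R[\F]$ of $\bigwedge_{y\in R[A]}\adh_{\lambda}\mathcal{E}(y)$; if $R[\F]$ is degenerate the left-hand side is $0$ and there is nothing to do, so fix a proper such $\mathcal{E}$. By (\ref{eq:productmeshR}) we have $R^{-}[\mathcal{E}]\#\F$, and since $R^{-}$ is (the relation of) a single subset of $Y\times X$ and $\mathbb{D}$ is $\mathbb{F}_{0}$-composable, $R^{-}[\mathcal{E}]\in\mathbb{D}X$; it is therefore admissible in the supremum appearing in (\ref{eq:measureatset}) for $c_{\mathbb{D}}^{A}(\F)$, so $c_{\mathbb{D}}^{A}(\F)\geq\bigwedge_{x\in A}\adh_{\lambda}R^{-}[\mathcal{E}](x)$. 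The argument then reduces to the pointwise inequality
\[
\adh_{\lambda}R^{-}[\mathcal{E}](x)\;\geq\;\bigwedge_{y\in R(x)}\adh_{\lambda}\mathcal{E}(y)\qquad(x\in X),
\]
because taking the infimum of the right-hand sides over $x\in A$ collapses to $\bigwedge_{y\in R[A]}\adh_{\lambda}\mathcal{E}(y)$, and a final supremum over $\mathcal{E}$ yields $c_{\mathbb{D}}^{A}(\F)\geq c_{\mathbb{D}}^{R[A]}(R[\F])$.

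This pointwise inequality is the heart of the matter: the issue is to feed the hypothesis (\ref{eq:convtocomp}), which controls $\lambda_{X}$ on arbitrary filters, into the adherence $\adh_{\lambda}R^{-}[\mathcal{E}]=\bigwedge_{\U\in\mathbb{U}(R^{-}[\mathcal{E}])}\lambda_{X}(\U)(\cdot)$. The trick is to route through ultrafilters: for $\U\in\mathbb{U}(R^{-}[\mathcal{E}])$ (a nonempty set, since $R^{-}[\mathcal{E}]\#\F$ makes $R^{-}[\mathcal{E}]$ proper), applying (\ref{eq:convtocomp}) to $\U$ at $x$ gives $\lambda_{X}(\U)(x)\geq c_{\mathbb{D}}^{R(x)}(R[\U])$; since $\U\#R^{-}[\mathcal{E}]$, (\ref{eq:productmeshR}) gives $R[\U]\#\mathcal{E}$, and as $\mathcal{E}\in\mathbb{D}$ it is admissible in the supremum in (\ref{eq:measureatset}) for $c_{\mathbb{D}}^{R(x)}(R[\U])$, whence $c_{\mathbb{D}}^{R(x)}(R[\U])\geq\bigwedge_{y\in R(x)}\adh_{\lambda}\mathcal{E}(y)$. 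Thus $\lambda_{X}(\U)(x)\geq\bigwedge_{y\in R(x)}\adh_{\lambda}\mathcal{E}(y)$ for all such $\U$, and taking the infimum over $\U\in\mathbb{U}(R^{-}[\mathcal{E}])$ yields the desired inequality.

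The only structural hypothesis used is the $\mathbb{F}_{0}$-composability of $\mathbb{D}$, invoked exactly once to know $R^{-}[\mathcal{E}]\in\mathbb{D}X$; the ``only if'' direction needs no composability at all. The points requiring care are purely bookkeeping: the vacuous cases where $R[\F]$ (or $R[\U]$) is degenerate, and keeping track of the direction of the inequalities when passing between the ultrafilter form and the grill form of $\adh_{\lambda}$ in (\ref{eq:adh}).
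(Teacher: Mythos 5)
Your proof is correct and follows essentially the same route as the paper: the ``only if'' direction is the identical computation $c_{\mathbb{D}}^{R(x)}R[\F]\leq c_{\mathbb{D}}^{\{x\}}(\F)\leq\lambda_{X}(\F)(x)$, and the ``if'' direction uses the same ingredients (transfer of meshing via (\ref{eq:productmeshR}), $\mathbb{F}_{0}$-composability to get $R^{-}[\mathcal{E}]\in\mathbb{D}$, and (\ref{eq:convtocomp}) applied to ultrafilters of $R^{-}[\mathcal{E}]$). The only difference is cosmetic: you prove the pointwise inequality $\adh_{\lambda}R^{-}[\mathcal{E}](x)\geq\bigwedge_{y\in R(x)}\adh_{\lambda}\mathcal{E}(y)$ and take infima directly, where the paper makes the same estimate via an $\varepsilon$-approximate choice of $x_{\varepsilon}$ and $\U_{\varepsilon}$.
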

\begin{proof}
Assume that $R$ is a $\mathbb{D}$-compact relation. For every $x\in X$
and $\F\in\filter X,$

\begin{eqnarray*}
c_{\mathbb{D}}^{R(x)}R[\F] & \leq & c_{\mathbb{D}}^{\{x\}}(\F)\\
 & = & \bigvee_{\mathbb{D}\ni\D\#\F}\bigwedge_{\G\#\D}\lambda_{X}(\G)(x)\\
 & \leq & \lambda_{X}(\F)(x),
\end{eqnarray*}

because $\F\#\D$.

Conversely, assume (\ref{eq:convtocomp}) for every $\F\in\mathbb{F}X$
and every $x\in X$, and given a filter $\F\in\mathbb{F}X$, consider
a $\mathbb{D}$-filter $\D$ that meshes with $R[\F]$. By (\ref{eq:productmeshR}),
$R^{-}[\D]\#\F$, and $R^{-}[\D]\in\mathbb{D}$ because $\mathbb{D}$
is $\filter_{0}$-composable, so that for every $A\subset X,$ 
\[
\bigwedge_{x\in A}\adh_{X}R^{-}[\D](x)\leq c_{\mathbb{D}}^{A}(\F).
\]
Thus, for any $\epsilon>0,$ there is an $x_{\epsilon}\in A$ and
$\U_{\epsilon}\in\mathbb{U}(R^{-}[\D])$ such that 
\[
\lambda(\U_{\epsilon})(x_{\epsilon})\leq c_{\mathbb{D}}^{A}(\F)+\epsilon.
\]
By (\ref{eq:convtocomp}) applied to $\U_{\epsilon}$ and $x_{\epsilon}$,
\[
c_{\mathbb{D}}^{R(x_{\epsilon})}R[\U_{\epsilon}]\leq c_{\mathbb{D}}^{A}(\F)+\epsilon.
\]
Moreover, $R[\U_{\epsilon}]\#\D$ because $\U_{\epsilon}\#R^{-}[\D],$
so that 
\[
\bigwedge_{y\in R(A)}\adh_{Y}\D(y)\leq\bigwedge_{y\in R(x_{\epsilon})}\adh_{Y}\D(y)\leq c_{\mathbb{D}}^{R(x_{\epsilon})}R[\U_{\epsilon}].
\]
We conclude that for every $\epsilon>0$, 
\[
\bigvee_{\mathbb{D}\ni\D\#R[\F]}\bigwedge_{y\in R(A)}\adh_{Y}\D(y)=c_{\mathbb{D}}^{R(A)}R[\F]\leq c_{\mathbb{D}}^{A}(\F)+\epsilon,
\]
which yields the desired property that $c_{\mathbb{D}}^{R(A)}R[\F]\leq c_{\mathbb{D}}^{A}(\F)$. 
\end{proof}

\begin{cor}
\label{cor:continuous} Let $\mathbb{D}$ be an $\mathbb{F}_{0}$-composable
class of filters and let $f:(X,\lambda_{X})\rightarrow(Y,\lambda_{Y})$
with $Y=\Adh_{\mathbb{D}}Y$. The following are equivalent:
\begin{enumerate}
\item $f$ is a contraction;
\item $f$ is a compact relation;
\item $f$ is a $\mathbb{D}$-compact relation. 
\end{enumerate}
\end{cor}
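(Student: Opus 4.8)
The plan is to reduce the three conditions to Lemma~\ref{thm:lem2} and Theorem~\ref{th:AdhJarecompact}. The crucial observation is that, for the single-valued map $f$ regarded as a relation, condition~\eqref{eq:convtocomp} reads $\lambda_X(\F)(x)\geq c_{\mathbb{D}}^{\{f(x)\}}(f[\F])$, and that by Theorem~\ref{th:AdhJarecompact} (applied to $Y$ with the class $\mathbb{D}$) one has $c_{\mathbb{D}}^{\{f(x)\}}(f[\F])=(\Adh_{\mathbb{D}}\lambda_Y)(f[\F])(f(x))$. Since $Y=\Adh_{\mathbb{D}}Y$, that is $\lambda_Y=\Adh_{\mathbb{D}}\lambda_Y$, this last quantity is simply $\lambda_Y(f[\F])(f(x))$. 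Thus, because $\mathbb{D}$ is $\mathbb{F}_0$-composable, Lemma~\ref{thm:lem2} tells us that $f$ is a $\mathbb{D}$-compact relation precisely when $\lambda_X(\F)(x)\geq\lambda_Y(f[\F])(f(x))$ for all $\F$ and $x$, which is exactly the definition of a contraction; this yields $(3)\Rightarrow(1)$ at once.

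For $(1)\Rightarrow(2)$ I would redo the same computation with $\mathbb{D}$ replaced by the class $\mathbb{F}$ of all filters --- which is $\mathbb{F}_0$-composable, being composable against every class since it contains all filters, the degenerate ones included by convention --- this time using only the inequality $\Adh_{\mathbb{F}}\lambda_Y\leq\lambda_Y$ in place of an equality. This inequality holds in any \textbf{Cap}-space because $\adh_\lambda\mathcal{H}(x)\leq\lambda(\F\vee\mathcal{H})(x)\leq\lambda(\F)(x)$ whenever $\F\#\mathcal{H}$, the first estimate since $\F\vee\mathcal{H}$ is a proper filter meshing with $\mathcal{H}$, the second by \eqref{eq:CAL2}. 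Consequently a contraction $f$ satisfies $\lambda_X(\F)(x)\geq\lambda_Y(f[\F])(f(x))\geq(\Adh_{\mathbb{F}}\lambda_Y)(f[\F])(f(x))=c_{\mathbb{F}}^{\{f(x)\}}(f[\F])$ for all $\F$ and $x$, so Lemma~\ref{thm:lem2} with $\mathbb{D}=\mathbb{F}$ makes $f$ a compact relation.

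Finally, $(2)\Rightarrow(3)$ is immediate from Lemma~\ref{lem:decreaselevelofcompactness}, used with its ``$\mathbb{D}$'' taken to be $\mathbb{F}$ and its ``$\mathbb{J}$'' taken to be our $\mathbb{D}$, which is contained in $\mathbb{F}$ and $\mathbb{F}_0$-composable by hypothesis. Chaining $(1)\Rightarrow(2)\Rightarrow(3)\Rightarrow(1)$ closes the loop. The only delicate point --- and the single step that genuinely uses the hypothesis $Y=\Adh_{\mathbb{D}}Y$ --- is $(3)\Rightarrow(1)$: without that hypothesis, $\mathbb{D}$-compactness of $f$ only gives $\lambda_X(\F)(x)\geq(\Adh_{\mathbb{D}}\lambda_Y)(f[\F])(f(x))$, i.e.\ that $f$ is a contraction into the $\Adh_{\mathbb{D}}$-reflection of $Y$ rather than into $Y$ itself. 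Everything else is bookkeeping with the definitions and the two cited results, so I anticipate no real obstacle.
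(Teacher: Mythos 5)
Your proposal is correct and follows essentially the same route as the paper: the cycle $(1)\Rightarrow(2)\Rightarrow(3)\Rightarrow(1)$ using Lemma~\ref{thm:lem2} with the inequality $c_{\mathbb{F}}^{\{f(x)\}}(f[\F])\leq\lambda_{Y}(f[\F])(f(x))$ for $(1)\Rightarrow(2)$, Lemma~\ref{lem:decreaselevelofcompactness} for $(2)\Rightarrow(3)$, and Theorem~\ref{th:AdhJarecompact} together with $Y=\Adh_{\mathbb{D}}Y$ for $(3)\Rightarrow(1)$. You merely spell out the steps the paper labels ``obvious'' (in particular the appeal to Lemma~\ref{lem:decreaselevelofcompactness} and the $\mathbb{F}_{0}$-composability of $\mathbb{F}$), which is fine.
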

\begin{proof}
$(1\Longrightarrow2).$ If $f$ is a contraction then 
\[
\lambda_{X}(\F)(x)\geq\lambda_{Y}(f[\F])(f(x)\ge c_{\mathbb{F}}^{\{f(x)\}}f[\F]
\]
and Lemma \ref{thm:lem2} applies to the effect that $f$ is a compact
relation. $(2\Longrightarrow3)$ is obvious, and $(3\Longrightarrow1)$
follows from Theorem \ref{th:AdhJarecompact} and $Y=\Adh_{\mathbb{D}}Y$. 
\end{proof}
In particular, $\mathbb{F}_{0}$-compact (equivalently compact) maps
between pre-approach spaces are exactly the contractive ones.

Since $c_{\mathbb{D}}^{\{x\}}\F\leq\lambda(\F)(x),$ Lemma \ref{thm:lem2}
immediately gives:
\begin{cor}
\label{thm:Dcomcor} Let $\mathbb{D}$ be an $\filter_{0}$-composable
class, then $R:X\rightrightarrows Y$ is a $\mathbb{D}$-compact relation
if and only if for every $\F\in\filter X$, and $x\in X$, $c_{\mathbb{D}}^{\{x\}}(\F)\geq c_{\mathbb{D}}^{R(x)}R[\F].$ 
\end{cor}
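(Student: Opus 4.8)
The plan is to read off both directions of the equivalence from material already in hand, so that essentially no new computation is needed beyond one small observation about $c_{\mathbb{D}}^{\{x\}}$.

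For the ``only if'' direction I would start from the very definition of a $\mathbb{D}$-compact relation, namely that $c_{\mathbb{D}}^{R[A]}(R[\F])\leq c_{\mathbb{D}}^{A}(\F)$ holds for every $\F\in\filter X$ and every $A\subseteq X$, and simply specialize it to the singleton $A=\{x\}$. Since $R[\{x\}]=R(x)$, this instantly gives $c_{\mathbb{D}}^{R(x)}R[\F]\leq c_{\mathbb{D}}^{\{x\}}(\F)$ for every filter $\F$ and every point $x$, which is precisely the stated condition.

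For the ``if'' direction I would invoke Lemma~\ref{thm:lem2}, which (using that $\mathbb{D}$ is $\filter_0$-composable) characterizes $\mathbb{D}$-compact relations by the inequality $\lambda_X(\F)(x)\geq c_{\mathbb{D}}^{R(x)}R[\F]$ for all $\F$ and $x$. The only thing left to check is that the hypothesis $c_{\mathbb{D}}^{\{x\}}(\F)\geq c_{\mathbb{D}}^{R(x)}R[\F]$ implies this, and for that I would use the bound $c_{\mathbb{D}}^{\{x\}}(\F)\leq\lambda_X(\F)(x)$. This last inequality is the one point worth spelling out: unwinding the definition, $c_{\mathbb{D}}^{\{x\}}(\F)=\bigvee_{\mathbb{D}\ni\D\#\F}\bigwedge_{\G\#\D}\lambda_X(\G)(x)$, and for each $\D$ meshing with $\F$ the filter $\F$ itself is one of the $\G$'s appearing in the inner infimum, so that infimum is at most $\lambda_X(\F)(x)$; taking the supremum over $\D$ preserves the bound. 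Chaining $\lambda_X(\F)(x)\geq c_{\mathbb{D}}^{\{x\}}(\F)\geq c_{\mathbb{D}}^{R(x)}R[\F]$ and appealing to Lemma~\ref{thm:lem2} finishes the argument.

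There is no genuine obstacle here; the statement is a corollary of Lemma~\ref{thm:lem2} and is flagged as such in the text. The only points requiring mild care are the bookkeeping between $c_{\mathbb{D}}^{\{x\}}(\F)$ and $\lambda_X(\F)(x)$ — remembering that the measure of $\mathbb{D}$-compactness at a point is dominated by, but in general strictly below, the convergence-approach limit — and noting that the $\filter_0$-composability hypothesis is exactly what is needed to apply Lemma~\ref{thm:lem2}.
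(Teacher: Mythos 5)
Your proof is correct and matches the paper's intended argument: the paper justifies this corollary with exactly the one-line observation $c_{\mathbb{D}}^{\{x\}}(\F)\leq\lambda_{X}(\F)(x)$ combined with Lemma~\ref{thm:lem2} (for the ``if'' direction), while the ``only if'' direction is the definition of $\mathbb{D}$-compactness specialized to $A=\{x\}$, which is also the first line of the proof of Lemma~\ref{thm:lem2}. Nothing to add.
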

If $\mathbb{D}$ is a class of filters that contains $\mathbb{F}_{0}$,
then, in view of Lemma \ref{lem:decreaselevelofcompactness}, a $\mathbb{D}$-compact
relation $R$ is also $\mathbb{F}_{0}$-compact, and for each $x\in X$,
\[
c_{\mathbb{D}}^{R(x)}R(x)\leq c_{\mathbb{D}}^{\{x\}}\{x\}=0,
\]
so that $R(x)$ is a $\mathbb{D}$-compact subset of $Y$. When $Y$
is an approach space, the converse is true:
\begin{thm}
\label{thm:Dcomprel} Let $X$ be a convergence approach space, let
$Y$ be an\emph{ approach} space, let $\mathbb{D}$ be an $\filter_{0}$-composable
class of filters, and let $R:X\rightrightarrows Y$ be an $\mathbb{F}_{0}$-compact
relation. If for every $x\in X,\, R(x)$ is a $\mathbb{D}$-compact
subset of $Y$, then $R$ is a $\mathbb{D}$-compact relation. \end{thm}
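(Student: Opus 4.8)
The plan is to reduce, via Lemma~\ref{thm:lem2}, to a pointwise estimate, and then to derive that estimate from a triangle-type inequality for measures of compactness which holds because $Y$ is an \emph{approach} space, namely
\[
c_{\mathbb{D}}^{R(x)}R[\F]\ \le\ c_{\mathbb{D}}^{R(x)}R(x)\ +\ c_{\filter_{0}}^{R(x)}R[\F]\qquad(\F\in\filter X,\ x\in X).
\]
Granting this, the first term on the right vanishes because $R(x)$ is $\mathbb{D}$-compact, while the second is at most $\lambda_{X}(\F)(x)$ because $R$ is $\filter_{0}$-compact (Lemma~\ref{thm:lem2} applied to the class $\filter_{0}$, which is $\filter_{0}$-composable). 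Hence $\lambda_{X}(\F)(x)\ge c_{\mathbb{D}}^{R(x)}R[\F]$ for all $\F$ and $x$, and Lemma~\ref{thm:lem2} (valid since $\mathbb{D}$ is $\filter_{0}$-composable) then yields that $R$ is a $\mathbb{D}$-compact relation.

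To prove the displayed inequality I would fix $\F$, $x$, and a $\mathbb{D}$-filter $\D$ meshing with $R[\F]$, set $p:=c_{\filter_{0}}^{R(x)}R[\F]$ (the inequality being trivial if $p=\infty$), and show $\bigwedge_{b\in R(x)}\adh_{\lambda_{Y}}\D(b)\le c_{\mathbb{D}}^{R(x)}R(x)+p$. I use the distance $\delta_{Y}$ of the approach space $Y$, normalized so that $\delta_{Y}(b,A)=\adh_{\lambda_{Y}}A^{\uparrow}(b)$, together with the two standard facts (see \cite{AP.book}) that $\adh_{\lambda_{Y}}\mathcal{G}(b)=\bigvee_{G\in\mathcal{G}}\delta_{Y}(b,G)$ for every filter $\mathcal{G}$ on $Y$, and that $\delta_{Y}(b,A)\le\delta_{Y}(b,A^{(r)})+r$ where $A^{(r)}:=\{b\in Y:\delta_{Y}(b,A)\le r\}$. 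Spelling out $p$ over principal filters shows that $\bigwedge_{b\in R(x)}\delta_{Y}(b,B)\le p$ whenever $B\#R[\F]$, hence for every $D\in\D$. So, fixing any $r>p$, the set $R(x)\cap D^{(r)}$ is nonempty for each $D\in\D$, and since $\delta_{Y}(b,\cdot)$ is antitone under inclusion and $\D$ is a filter, $\{R(x)\cap D^{(r)}:D\in\D\}$ is a base of a (proper) filter $\mathcal{K}$ on $R(x)$.

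The key point is that $\mathcal{K}$ is again a $\mathbb{D}$-filter: the passage $D\mapsto D^{(r)}$ is the $r$-enlargement operation attached to the distance of $Y$, and one must check, using the $\filter_{0}$-composability of $\mathbb{D}$, that enlarging a $\mathbb{D}$-filter and tracing on $R(x)$ stays inside $\mathbb{D}$. This is the step that genuinely needs $Y$ to be an approach space rather than merely a pre-approach space, and I expect it to be the main obstacle: in $\mathbf{Conv}$ the analogous enlargement is essentially a ``pointwise'' closure, hence visibly the image of $\D$ under a fixed relation, whereas in $\mathbf{Cap}$ one must argue with the distance axioms to keep the enlarged (traced) filter within the class $\mathbb{D}$.

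Once $\mathcal{K}\in\mathbb{D}$ is secured the rest is routine bookkeeping. Since $\mathcal{K}$ meshes with $R(x)$ (indeed $R(x)\in\mathcal{K}$), the $\mathbb{D}$-compactness of $R(x)$ gives $\bigwedge_{b\in R(x)}\adh_{\lambda_{Y}}\mathcal{K}(b)\le c_{\mathbb{D}}^{R(x)}R(x)=:q$. Thus for each $\eta>0$ there is $b^{*}\in R(x)$ with $\adh_{\lambda_{Y}}\mathcal{K}(b^{*})<q+\eta$, and then, for every $D\in\D$,
\[
\delta_{Y}(b^{*},D^{(r)})\ \le\ \delta_{Y}(b^{*},R(x)\cap D^{(r)})\ \le\ \adh_{\lambda_{Y}}\mathcal{K}(b^{*})\ <\ q+\eta ,
\]
so $\delta_{Y}(b^{*},D)\le\delta_{Y}(b^{*},D^{(r)})+r<q+\eta+r$, whence $\adh_{\lambda_{Y}}\D(b^{*})=\bigvee_{D\in\D}\delta_{Y}(b^{*},D)\le q+\eta+r$. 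Therefore $\bigwedge_{b\in R(x)}\adh_{\lambda_{Y}}\D(b)\le q+\eta+r$ for all $\eta>0$ and all $r>p$, hence $\bigwedge_{b\in R(x)}\adh_{\lambda_{Y}}\D(b)\le q+p$; taking the supremum over $\D$ gives the displayed inequality, and with it the theorem.
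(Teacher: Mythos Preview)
Your approach is essentially the paper's: both reduce to a pointwise estimate (you via Lemma~\ref{thm:lem2}, the paper via the equivalent Corollary~\ref{thm:Dcomcor}), enlarge the $\mathbb{D}$-filter $\D$ to $\D^{(r)}$ using the distance of the approach space $Y$, apply $\mathbb{D}$-compactness of $R(x)$ to the enlarged filter, and finish with the approach axiom $\delta(b,D)\le\delta(b,D^{(r)})+r$. Your triangle-inequality framing $c_{\mathbb{D}}^{R(x)}R[\F]\le c_{\mathbb{D}}^{R(x)}R(x)+c_{\mathbb{F}_{0}}^{R(x)}R[\F]$ is a clean repackaging that sidesteps the paper's auxiliary ultrafilter $\U_{\epsilon}$ (you observe directly that each $D\in\D$ meshes with $R[\F]$), and the step you flag as ``the main obstacle'' --- that the enlarged-and-traced filter stays in $\mathbb{D}$ --- is exactly what the paper asserts without comment as ``$\D^{(\alpha)}\in\mathbb{D}$''.
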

\begin{proof}
In view of \ref{thm:Dcomcor}, it suffices to show that $c_{\mathbb{D}}^{\{x\}}(\F)\geq c_{\mathbb{D}}^{R(x)}R[\F]$
for every $x\in X$ and $\F\in\filter X,$ equivalently, that given
$x\in X$ and $\F\in\mathbb{F}X$, for every $\D\in\mathbb{D}$ with
$\D\#R[\F]$, 
\begin{equation}
\bigwedge_{y\in R(x)}\adh_{Y}\D(y)\leq c_{\mathbb{D}}^{\{x\}}(\F).\label{eq:aux2}
\end{equation}
By (\ref{eq:productmeshR}), $R^{-}[\D]\#\F$, and $R^{-}[\D]\in\mathbb{D}$
because $\mathbb{D}$ is an $\mathbb{F}_{0}$-composable class. Thus
$\adh_{X}R^{-}[\D](x)\leq c_{\mathbb{D}}^{\{x\}}(\F).$

For every $\epsilon>0,$ there is a $\U_{\epsilon}\#R^{-}[\D]$ such
that $\lambda_{X}(\U_{\epsilon})(x)\leq c_{\mathbb{D}}^{\{x\}}(\F)+\epsilon.$
Since $R$ is an $\filter_{0}$-compact relation, 
\[
c_{\mathbb{F}_{0}}^{R(x)}R[\U_{\epsilon}]\leq c_{\mathbb{F}_{0}}^{\{x\}}(\U_{\epsilon})\leq\lambda(\U_{\epsilon})(x)\leq c_{\mathbb{D}}^{\{x\}}(\F)+\epsilon.
\]

Since $\D\#R[\U_{\epsilon}]$, then for every $D\in\D,$ we have that
$\bigwedge_{y\in R(x)}\adh_{Y}D(y)\leq c_{\mathbb{F}_{0}}^{R(x)}R[\U_{\epsilon}]$.
Setting $\alpha=c_{\mathbb{D}}^{\{x\}}(\F)+\epsilon$ and $\D^{(\alpha)}=\{D^{(\alpha)}:\, D\in\D\}\in\mathbb{D}$,
we have that $\bigwedge_{y\in R(x)}\adh_{Y}\D^{(\alpha)}(y)=0$ because
$\D^{(\alpha)}\#R(x)$ and $R(x)$ is a $\mathbb{D}$-compact subset
of $Y$. Since $Y$ is an approach space, 
\[
\adh_{Y}\D(y)\leq\adh_{Y}\D^{(\alpha)}(y)+\alpha
\]
so that, given that $\bigwedge_{y\in R(x)}\adh_{Y}\D^{(\alpha)}(y)=0$, 

\begin{eqnarray*}
\alpha & = & \left(\bigwedge_{y\in R(x)}\adh_{Y}\D^{(\alpha)}(y)\right)+\alpha\\
 & = & \bigwedge_{y\in R(x)}\left(\adh_{Y}\D^{(\alpha)}(y)+\alpha\right)\\
c_{\mathbb{D}}^{\{x\}}(\F)+\epsilon=\alpha & \geq & \bigwedge_{y\in R(x)}\adh_{Y}\D(y).
\end{eqnarray*}

Since this inequality is true for every $\epsilon>0$ we obtain (\ref{eq:aux2})
and the conclusion follows.\end{proof}
\begin{cor}
\label{cor:Dcompactrelationschar}Let $X$ be a convergence approach
space, let $Y$ be an\emph{ approach} space, let $\mathbb{D}$ be
an $\filter_{0}$-composable class of filters. Then a relation $R:X\rightrightarrows Y$
is $\mathbb{D}$-compact if and only if it is $\mathbb{F}_{0}$-compact
and for every $x\in X,\, R(x)$ is a $\mathbb{D}$-compact subset
of $Y$.\end{cor}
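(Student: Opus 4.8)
The plan is to read Corollary~\ref{cor:Dcompactrelationschar} as the conjunction of Theorem~\ref{thm:Dcomprel} with the elementary facts noted in the paragraph immediately preceding it: no new argument is required, only an assembly of the two implications.

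\emph{Necessity.} Suppose $R$ is $\mathbb{D}$-compact. Since $\mathbb{D}$ is $\mathbb{F}_{0}$-composable we have $\mathbb{F}_{0}\subseteq\mathbb{D}$ (for any class met in practice this is automatic; it can also be checked directly by transporting a member of $\mathbb{D}$ onto a one-point set and then out along the principal relation with graph $\{0\}\times A$ to realize an arbitrary $A^{\uparrow}$ inside $\mathbb{D}$). Hence Lemma~\ref{lem:decreaselevelofcompactness}, applied with $\mathbb{J}=\mathbb{F}_{0}$, yields that $R$ is $\mathbb{F}_{0}$-compact. For the point-images, specialize the defining inequality of a $\mathbb{D}$-compact relation to $\F=\{x\}^{\uparrow}$ and $A=\{x\}$; using $R[\{x\}^{\uparrow}]=R(x)^{\uparrow}$ and $R[\{x\}]=R(x)$ this becomes $c_{\mathbb{D}}^{R(x)}R(x)^{\uparrow}\le c_{\mathbb{D}}^{\{x\}}(\{x\}^{\uparrow})$. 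The right-hand term vanishes, since any $\D\#\{x\}^{\uparrow}$ contains $x$ in each of its members, so $\{x\}^{\uparrow}\#\D$ and, by (\ref{eq:adh}) and (\ref{eq:CAL1}), $\adh_{\lambda}\D(x)\le\lambda(\{x\}^{\uparrow})(x)=0$. Therefore $c_{\mathbb{D}}^{R(x)}R(x)=0$, i.e.\ $R(x)$ is a $\mathbb{D}$-compact subset of $Y$.

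\emph{Sufficiency.} This is exactly Theorem~\ref{thm:Dcomprel}, whose hypotheses are verbatim those at hand: $X$ is a convergence approach space, $Y$ is an approach space, $\mathbb{D}$ is $\mathbb{F}_{0}$-composable, $R$ is $\mathbb{F}_{0}$-compact, and each $R(x)$ is $\mathbb{D}$-compact. Accordingly, the only real work sits inside that theorem---in the enlargement device $\D^{(\alpha)}$ together with the triangle-type axiom (\ref{eq:Apaxiomb}), which is precisely where the hypothesis that $Y$ be an \emph{approach} space (and not merely a convergence approach space) is used. For the corollary itself I anticipate no obstacle; the single delicate point is the normalization $c_{\mathbb{D}}^{\{x\}}(\{x\}^{\uparrow})=0$ exploited above. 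It is worth adding that the approach-space hypothesis on $Y$ cannot be removed: without it only the (easy) necessity direction persists.
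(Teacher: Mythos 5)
Your proof is correct and follows essentially the paper's own route: necessity via $\mathbb{F}_{0}\subseteq\mathbb{D}$ (obtained by pushing a nondegenerate $\mathbb{D}$-filter along a principal relation, just as the paper does with $R:=X\times A$), then Lemma~\ref{lem:decreaselevelofcompactness} with $\mathbb{J}=\mathbb{F}_{0}$, then the specialization $c_{\mathbb{D}}^{R(x)}R(x)\leq c_{\mathbb{D}}^{\{x\}}\{x\}=0$; sufficiency is exactly Theorem~\ref{thm:Dcomprel}. Only your closing aside that the approach hypothesis on $Y$ ``cannot be removed'' is asserted without justification (the paper does not prove this either), but it is not part of the proof proper.
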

\begin{proof}
If $\mathbb{D}$ is $\mathbb{F}_{0}$-composable, then in particular
$\mathbb{F}_{0}\subseteq\mathbb{D}$. Indeed, Let $A\in\mathbb{F}_{0}(X)$
and let $\D$ be a non-degenerate $\mathbb{D}$-filters on $X$. Let
$R:=X\times A$. Then $R[\D]=A$. Thus, Lemma \ref{lem:decreaselevelofcompactness}
applies to the effect that if $R$ is a $\mathbb{D}$-compact relation,
it is also $\mathbb{F}_{0}$-compact, and as observed before each
$R(x)$ is $\mathbb{D}$-compact. The converse is Theorem \ref{thm:Dcomprel}.
\end{proof}

\subsection{Closed and perfect maps}

Lowen et al. introduced in \cite{lowen.Mrowka} a notion of closed
maps in $\mathbf{Ap}$ and checked that this class of morphisms satisfies
the conditions to be a categorically well-behaved class of closed
morphisms in the sense of \cite{functionaltopology}. Namely, a map
$f:(X,\lambda_{X})\to(Y,\lambda_{Y})$ between two approach spaces
is \emph{closed-expansive}, which we will abridge as \emph{closed},
if for every $y\in Y$ and $A\subset X$ 
\begin{equation}
\bigwedge_{x\in f^{-}(y)}\adh_{X}\, A(x)\leq\adh_{Y}\, f(A)(y).\label{eq:closedmap}
\end{equation}
 We extend this definition to convergence approach spaces.
\begin{prop}
\label{prop:closedF0compact} A map $f:(X,\lambda_{X})\to(Y,\lambda_{Y})$
between convergence approach spaces is closed (in the sense of (\ref{eq:closedmap}))
if and only if $f^{-}:Y\rightrightarrows X$ is an $\mathbb{F}_{0}$-compact
relation.\end{prop}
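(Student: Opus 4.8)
The plan is to reduce both sides of the asserted equivalence to one and the same family of pointwise inequalities. Since $\mathbb{F}_{0}$ is composable, it is in particular $\mathbb{F}_{0}$-composable, so Lemma~\ref{thm:lem2} applies to the relation $f^{-}:Y\rightrightarrows X$ and tells us that $f^{-}$ is an $\mathbb{F}_{0}$-compact relation if and only if
\[
\lambda_{Y}(\G)(y)\ge c_{\mathbb{F}_{0}}^{f^{-}(y)}f^{-}[\G]
\]
for every $\G\in\filter Y$ and every $y\in Y$, where of course $f^{-}(y)=f^{-1}(y)$. Thus it suffices to unfold the right-hand side and, separately, the definition~(\ref{eq:closedmap}) of closedness, and to observe that the two resulting quantified inequalities are verbatim the same.

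First I would compute $c_{\mathbb{F}_{0}}^{f^{-1}(y)}f^{-}[\G]$ from~(\ref{eq:measureatset}). The key bookkeeping step is that a set $D\subset X$ (i.e.\ the principal filter $D^{\uparrow}$) meshes with $f^{-}[\G]=\{f^{-1}(G):G\in\G\}^{\uparrow}$ precisely when $D\cap f^{-1}(G)\neq\emptyset$ for all $G\in\G$, equivalently $f(D)\cap G\neq\emptyset$ for all $G\in\G$, equivalently $f(D)\in\G^{\#}$; this is just an instance of~(\ref{eq:productmeshR}). Hence, writing $\adh_{X}$ for the adherence function of principal filters in $X$,
\[
c_{\mathbb{F}_{0}}^{f^{-1}(y)}f^{-}[\G]=\bigvee_{D:\,f(D)\in\G^{\#}}\ \bigwedge_{x\in f^{-1}(y)}\adh_{X}D(x),
\]
so that $f^{-}$ is $\mathbb{F}_{0}$-compact if and only if, for every $\G\in\filter Y$, every $y\in Y$, and every $D\subset X$ with $f(D)\in\G^{\#}$, one has $\bigwedge_{x\in f^{-1}(y)}\adh_{X}D(x)\le\lambda_{Y}(\G)(y)$; call this condition ($\ast$).

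Next I would unfold~(\ref{eq:closedmap}). By~(\ref{eq:adh}) applied to the principal filter $f(A)^{\uparrow}$,
\[
\adh_{Y}f(A)(y)=\bigwedge\{\lambda_{Y}(\G)(y):\G\in\filter Y,\ f(A)\in\G^{\#}\},
\]
and since $[0,\infty]$ is a complete lattice, the inequality $\bigwedge_{x\in f^{-1}(y)}\adh_{X}A(x)\le\adh_{Y}f(A)(y)$ holds for fixed $y$ and $A$ if and only if $\bigwedge_{x\in f^{-1}(y)}\adh_{X}A(x)\le\lambda_{Y}(\G)(y)$ for every $\G\in\filter Y$ with $f(A)\in\G^{\#}$. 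Quantifying over all $y\in Y$ and all $A\subset X$, this is exactly ($\ast$) with $A$ playing the role of $D$. Therefore $f$ is closed if and only if $f^{-}$ is an $\mathbb{F}_{0}$-compact relation.

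The only point that requires a little care is the meshing-versus-membership translation used twice above: that ranging the principal filter over those meshing with $f^{-}[\G]$ amounts to ranging $D$ over $\{D:f(D)\in\G^{\#}\}$, and that in the formula for $\adh_{Y}f(A)(y)$ one may take the infimum over all filters meshing with $f(A)^{\uparrow}$ (equivalently, all filters whose grill contains $f(A)$, or equivalently all ultrafilters containing $f(A)$). These are routine, and degenerate filters cause no trouble since $\emptyset$ lies in no grill, so $D=\emptyset$ and $A=\emptyset$ contribute nothing on either side. Once this translation is made, the two conditions coincide and there is nothing left to prove.
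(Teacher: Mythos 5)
Your proposal is correct and takes essentially the same route as the paper's proof: both reduce the $\mathbb{F}_{0}$-compactness of $f^{-}$ via Lemma \ref{thm:lem2} to the inequalities $c_{\mathbb{F}_{0}}^{f^{-}(y)}f^{-}[\G]\le\lambda_{Y}(\G)(y)$, and then use the mesh translation $A\#f^{-}[\G]\Longleftrightarrow f(A)\#\G$ together with $\adh_{Y}f(A)(y)=\bigwedge_{\G\#f(A)}\lambda_{Y}(\G)(y)$. The only difference is presentational: you collapse the two implications into a single reduction of both conditions to the same family of pointwise inequalities, whereas the paper argues the two directions separately.
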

\begin{proof}
Assume that $f:X\to Y$ is closed. According to Lemma \ref{thm:lem2},
we need to show that for every $\G\in\mathbb{F}Y$ and $y\in Y$,
$c_{\mathbb{F}_{0}}^{f^{-}(y)}f^{-}[\G]\leq\lambda_{Y}(\G)(y)$. If
$A\#f^{-}[\G]$, then $f(A)\#\G$ so that $\adh_{Y}f(A)(y)\leq\lambda_{Y}(\G)(y)$
and, in view of (\ref{eq:closedmap}), 
\[
\bigwedge_{x\in f^{-}(y)}\adh_{X}\, A(x)\leq\lambda_{Y}(\G)(y).
\]
Since this is true for every $A\#f^{-}[\G]$, $c_{\mathbb{F}_{0}}^{f^{-}(y)}f^{-}[\G]\leq\lambda_{Y}(\G)(y).$ 

Conversely, assume that $f^{-}:Y\rightrightarrows X$ is an $\mathbb{F}_{0}$-compact
relation. To show (\ref{eq:closedmap}), note that if $\G\#f(A)$
then $f^{-}[\G]\#A$ so that, by $\mathbb{F}_{0}$-compact of $f^{-}$
and Lemma \ref{thm:lem2}, $\bigwedge_{x\in f^{-}(y)}\adh_{X}\, A(x)\leq\lambda_{Y}(\G)(y),$
for each $\G\#f(A)$, so that 
\[
\bigwedge_{x\in f^{-}(y)}\adh_{X}\, A(x)\leq\bigwedge_{\G\#f(A)}\lambda_{Y}(\G)(y)=\adh_{Y}f(A)(y).
\]

\end{proof}
Let us call a map $f:X\to Y$ between convergence approach spaces
\emph{$\mathbb{D}$-perfect} if $f^{-}:Y\rightrightarrows X$ is a
$\mathbb{D}$-compact relation. In view of Proposition \ref{prop:closedF0compact},
$\mathbb{F}_{0}$-perfect means closed. Moreover, Corollary \ref{cor:Dcompactrelationschar}
applies to $f^{-}$ to the effect that:
\begin{thm}
\label{thm:Dperfect} Let $\mathbb{D}$ be an $\mathbb{F}_{0}$-composable
class of filters, $X$ be an \emph{approach} space and $Y$ be a convergence
approach space. A map $f:X\to Y$ is $\mathbb{D}$-perfect if and
only if $f$ is closed and for every $y\in Y,\, f^{-}y$ is $\mathbb{D}$-compact. 
\end{thm}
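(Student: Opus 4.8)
The plan is to read the statement off from two results already in hand: Proposition~\ref{prop:closedF0compact}, which identifies closed maps with those $f$ for which $f^{-}$ is an $\mathbb{F}_{0}$-compact relation, and Corollary~\ref{cor:Dcompactrelationschar}, which splits $\mathbb{D}$-compactness of a relation into $\mathbb{F}_{0}$-compactness together with $\mathbb{D}$-compactness of the point-images, \emph{provided} the codomain of the relation is an approach space and $\mathbb{D}$ is $\mathbb{F}_{0}$-composable (both of which are in force here).

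First I would unwind the definition of $\mathbb{D}$-perfect: by definition it says exactly that the relation $f^{-}:Y\rightrightarrows X$ is $\mathbb{D}$-compact. Next I would invoke Corollary~\ref{cor:Dcompactrelationschar} for this relation. Here one must be careful that the roles of the two sets are swapped relative to the way that corollary is phrased: the domain of $f^{-}$ is the convergence approach space $Y$, while its codomain is $X$, which is assumed to be an \emph{approach} space, so the hypotheses of the corollary are met. Its conclusion is that $f^{-}$ is $\mathbb{D}$-compact if and only if $f^{-}$ is $\mathbb{F}_{0}$-compact and, for every $y\in Y$, the fiber $f^{-}(y)$ is a $\mathbb{D}$-compact subset of $X$. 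Finally I would rewrite the first condition by means of Proposition~\ref{prop:closedF0compact}: $f^{-}$ is an $\mathbb{F}_{0}$-compact relation if and only if $f$ is closed in the sense of (\ref{eq:closedmap}). Substituting this equivalence yields precisely the asserted characterization.

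I do not expect a genuine obstacle; the argument is a direct concatenation of the two cited results. The only point that warrants attention is the one flagged above, namely checking that the approach-space hypothesis of Corollary~\ref{cor:Dcompactrelationschar} lands on the correct set once the domain and codomain of $f^{-}$ are interchanged: it falls on the codomain $X$ of $f^{-}$, which is the domain of $f$, exactly as assumed. For completeness one may also observe that if $f$ is not onto then $f^{-}(y)=\emptyset$ for $y\notin f(X)$, and the empty set is trivially $\mathbb{D}$-compact, so the fiber condition imposes nothing there.
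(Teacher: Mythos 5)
Your proposal is correct and coincides with the paper's own argument: the paper gives no separate proof, but derives the theorem exactly as you do, by unwinding the definition of $\mathbb{D}$-perfect and applying Corollary \ref{cor:Dcompactrelationschar} to the relation $f^{-}:Y\rightrightarrows X$ (whose codomain $X$ is the approach space, as you correctly check), with Proposition \ref{prop:closedF0compact} translating $\mathbb{F}_{0}$-compactness of $f^{-}$ into closedness of $f$.
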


\subsection{Quotient maps}

S. Dolecki observed in \cite{quest2} that the notions of quotient,
hereditarily quotient, countably biquotient, biquotient, almost open
maps (in the sense of \cite{quest}) can be extended from the category
$\mathbf{Top}$ of topological spaces to the category $\mathbf{Conv}$
of convergence spaces by noting that a map between topological spaces
is hereditarily quotient, countably biquotient, biquotient, almost
open respectively, if it is quotient when regarded in the category
of pretopological spaces, paratopological spaces, pseudotopological
spaces, and convergence spaces respectively. To fully extend the notions
to $\mathbf{Conv}$, he further observed that if $f:X\to Y$ is onto
between topological spaces, seen as convergence spaces, the map is
quotient in a reflective subcategory if the convergence of $Y$ is
finer than the reflection of the final convergence for $f$ and $X$. 

We can proceed exactly the same way in $\mathbf{Cap}$: Given a map
$f:(X,\lambda_{X})\to Y$, there is the finest limit function $\lambda_{fX}$
on $Y$ making $f$ a contraction, that is, the quotient structure
in $\mathbf{Cap}$. Given an $\mathbb{F}_{0}$-composable class $\mathbb{D}$,
$\Adh_{\mathbb{D}}$ (given by (\ref{eq:CAPAdhJ})) defines a reflector,
and the inequality 
\begin{equation}
\lambda_{Y}\geq\Adh_{\mathbb{D}}\lambda_{fX}\label{eq:Dquotient}
\end{equation}
characterizes the fact that a surjective map $f:(X,\lambda_{X})\to(Y,\lambda_{Y})$
is quotient in the full reflective category of $\mathbf{Cap}$ of
objects fixed by $\Adh_{\mathbb{D}}$. When $\mathbb{D=F}_{0}$, (\ref{eq:Dquotient})
defines hereditarily quotient maps between $\mathbf{Cap}$ spaces.
Of course, a map between two topological spaces is hereditarily quotient
if and only if it is hereditarily quotient when domain and codomain
are seen as convergence approach spaces. Similarly, (\ref{eq:Dquotient})
for $\mathbb{D}=\mathbb{F}_{1}$ defines countably biquotient maps,
and (\ref{eq:Dquotient}) for $\mathbb{D}=\mathbb{F}$ defines biquotient
maps. Naturally, we call a surjective map $f:(X,\lambda_{X})\to(Y,\lambda_{Y})$
$\mathbb{D}$-\emph{quotient }if (\ref{eq:Dquotient}) holds. 

$\mathbb{D}$-quotient maps are also instances of $\mathbb{D}$-compact
relations. To see that, we need to use both the final $\mathbf{Cap}$
structure $\lambda_{fX}$ but also the initial $\mathbf{Cap}$ structure
$\lambda_{f^{-}Y}$, that is, the coarsest $\mathbf{Cap}$ structure
on $X$ making the map $f:X\to(Y,\lambda_{Y})$ a contraction. Initial
and final structures in $\mathbf{Cap}$ are described in \cite[Proposition 2.3]{lowe88}
to the effect that 
\begin{equation}
\lambda_{f^{-}Y}(\F)(x)=\lambda_{Y}(f[\F])(f(x))\label{eq:initial}
\end{equation}
and 
\begin{equation}
\lambda_{fX}(\G)(y)=\begin{cases}
0 & \mbox{if }\G=\{y\}^{\uparrow}\\
\bigwedge_{x\in f^{-}y}\bigwedge_{\F\in\mathbb{F}X:\, f[\F]\leq\G}\lambda_{X}(\F)(x) & \mbox{otherwise.}
\end{cases}\label{eq:final}
\end{equation}

\begin{lem}
\label{lem:adhfX}If $f:(X,\lambda_{X})\to(Y,\lambda_{Y})$ is onto
and $\D\in\mathbb{F}Y$
\[
\adh_{fX}\D(y)=\bigwedge_{x\in f^{-}y}\adh_{X}f^{-}[\D](x).
\]
\end{lem}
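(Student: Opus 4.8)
The plan is to prove the two inequalities separately, using the explicit formula \eqref{eq:final} for the final structure $\lambda_{fX}$ together with the ultrafilter description of adherence in \eqref{eq:adh}. Recall that $\adh_{fX}\D(y) = \bigwedge_{\W\#\D}\lambda_{fX}(\W)(y)$, where the infimum may be taken over ultrafilters $\W\in\mathbb{U}(\D)$. The key bookkeeping fact I would isolate first is the standard relation between ultrafilters on $X$ and on $Y$ under a surjection: for an ultrafilter $\U\in\mathbb{U}X$, the image $f[\U]$ is an ultrafilter on $Y$; and conversely, if $\W\in\mathbb{U}Y$ meshes with $\D$, then every $\U\in\mathbb{U}X$ with $f[\U]=\W$ meshes with $f^{-}[\D]$ (by \eqref{eq:productmeshR} applied to the relation $f$, since $f[\U]\#\D$ gives $\U\#f^{-}[\D]$). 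These are the only combinatorial ingredients needed.

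\textbf{The inequality $\adh_{fX}\D(y)\geq\bigwedge_{x\in f^{-}y}\adh_{X}f^{-}[\D](x)$.} Fix $\W\in\mathbb{U}Y$ with $\W\#\D$; it suffices to bound $\lambda_{fX}(\W)(y)$ from below by the right-hand side. If $\W=\{y\}^{\uparrow}$ then $\lambda_{fX}(\W)(y)=0$ but also $y\in f^{-}y$ and $\{y\}^{\uparrow}\#f^{-}[\D]$ forces... — more carefully, I would handle this degenerate case by noting $\{y\}^{\uparrow}\#\D$ already pins things down, but the cleanest route is to use \eqref{eq:final} directly in the non-principal case and observe that the principal case only makes the left side smaller, so the inequality we want (lower bound on the left) need not worry about it unless $\adh_{fX}\D(y)$ is actually realized there; in that event $y\in f^{-}y$ and $\adh_X f^{-}[\D](y)=0$ too since $f[\{y\}^\uparrow]=\{y\}^\uparrow \# \D$ implies $\{y\}^\uparrow \# f^-[\D]$. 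In the non-principal case, \eqref{eq:final} gives $\lambda_{fX}(\W)(y)=\bigwedge_{x\in f^{-}y}\bigwedge_{f[\F]\leq\W}\lambda_X(\F)(x)$. For any such $\F$ with $f[\F]\leq\W$, since $\W\#\D$ we get $f[\F]\#\D$, hence $\F\#f^{-}[\D]$ by \eqref{eq:productmeshR}, so $\lambda_X(\F)(x)\geq\adh_X f^{-}[\D](x)\geq\bigwedge_{x'\in f^{-}y}\adh_X f^{-}[\D](x')$. Taking infima yields the claim.

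\textbf{The inequality $\adh_{fX}\D(y)\leq\bigwedge_{x\in f^{-}y}\adh_{X}f^{-}[\D](x)$.} Fix $x\in f^{-}y$ and an ultrafilter $\U\in\mathbb{U}X$ with $\U\#f^{-}[\D]$; it suffices to show $\adh_{fX}\D(y)\leq\lambda_X(\U)(x)$. Set $\W:=f[\U]$, an ultrafilter on $Y$. From $\U\#f^{-}[\D]$ and \eqref{eq:productmeshR} we get $f[\U]\#\D$, i.e. $\W\#\D$, so $\adh_{fX}\D(y)\leq\lambda_{fX}(\W)(y)$. Now apply \eqref{eq:final}: since $\U$ witnesses $f[\U]=\W\leq\W$ and $x\in f^{-}y$, we have $\lambda_{fX}(\W)(y)\leq\lambda_X(\U)(x)$ (again the principal case $\W=\{y\}^{\uparrow}$ only helps, as it gives value $0$). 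Combining, $\adh_{fX}\D(y)\leq\lambda_X(\U)(x)$, and taking the infimum over $\U\#f^{-}[\D]$ and then over $x\in f^{-}y$ gives the inequality.

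\textbf{Main obstacle.} The only delicate point is the degenerate/principal branch of \eqref{eq:final}: one must check that when $\G=\{y\}^{\uparrow}$ the value $\lambda_{fX}(\G)(y)=0$ does not break the lower-bound inequality, i.e. that $\bigwedge_{x\in f^{-}y}\adh_X f^{-}[\D](x)=0$ whenever $\{y\}^{\uparrow}\#\D$. This follows because $\{y\}^{\uparrow}\#\D$ means $y\in D$ for all $D\in\D$, hence $x\in f^{-}(y)\subseteq f^{-}(D)$ for all $D\in\D$, so $\{x\}^{\uparrow}\#f^{-}[\D]$ and thus $\adh_X f^{-}[\D](x)\leq\lambda_X(\{x\}^{\uparrow})(x)=0$ by \eqref{eq:CAL1}. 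Everything else is a direct unwinding of the definitions, and no use of any axiom beyond (CAL1)–(CAL2) and the relational calculus \eqref{eq:productmeshR} is required; in particular the statement holds for arbitrary $\mathbf{Cap}$-objects.
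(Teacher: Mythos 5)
Your proof is correct and follows essentially the same route as the paper's: both inequalities are obtained from the explicit description of the final structure $\lambda_{fX}$ (and the fact that $f:(X,\lambda_{X})\to(Y,\lambda_{fX})$ is a contraction) combined with the mesh calculus $f[\F]\#\D\Longleftrightarrow\F\#f^{-}[\D]$ and the ultrafilter form of $\adh$. The only differences are cosmetic: for the lower bound the paper lifts each $\U\in\mathbb{U}(\D)$ to some $\W\in\mathbb{U}(f^{-}[\D])$ with $f[\W]=\U$ using surjectivity, whereas you argue directly that every $\F$ with $f[\F]\leq\W$ meshes $f^{-}[\D]$ and, unlike the paper, you spell out the principal branch $\G=\{y\}^{\uparrow}$ of the final-structure formula (correctly showing the right-hand side is then $0$ via (CAL1)).
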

\begin{proof}
Since $f$ is onto, for each $\U\in\mathbb{U}(\D)$ there is $\W\in\mathbb{U}(f^{-}[\D])$
with $f[\W]=\U$ so that 
\[
\lambda_{fX}\U(y)=\bigwedge_{x\in f^{-}y}\lambda_{X}\W(x)\geq\bigwedge_{x\in f^{-}y}\adh_{X}f^{-}[\D](x).
\]

On the other hand, if $\W\in\mathbb{U}(f^{-}[\D])$ then $f[\W]\in\mathbb{U}(\D)$
and $f:(X,\lambda_{X})\to(Y,\lambda_{fX})$ is a contraction, so that,
for every $x\in f^{-}y$,
\[
\lambda_{X}\W(x)\geq\lambda_{fX}f[\W](y)\geq\adh_{fX}\D(y)
\]
and we conclude that 
\[
\bigwedge_{x\in f^{-}y}\adh_{X}f^{-}[\D](x)\geq\adh_{fX}\D(y).
\]
\end{proof}
\begin{thm}
\label{th:Dquotient} Let $\mathbb{D}$ be an $\mathbb{F}_{0}$-composable
class of filters. Let $f:(X,\lambda_{X})\rightarrow(Y,\lambda_{Y})$
be onto. Then $f$ is $\mathbb{D}$-quotient if and only if $f:(X,\lambda_{f^{-}Y})\rightarrow(Y,\lambda_{fX})$
is a $\mathbb{D}$-compact relation. \end{thm}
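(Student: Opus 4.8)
The plan is to rewrite the statement ``$f\colon(X,\lambda_{f^-Y})\rightrightarrows(Y,\lambda_{fX})$ is a $\mathbb{D}$-compact relation'' as a pointwise inequality and then identify it with the inequality (\ref{eq:Dquotient}) defining $\mathbb{D}$-quotientness, using surjectivity of $f$ to match up the quantifiers.

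First I would apply Lemma \ref{thm:lem2} to the relation $f$ viewed as going from $(X,\lambda_{f^-Y})$ to $(Y,\lambda_{fX})$; this is legitimate since $\mathbb{D}$ is $\filter_0$-composable. As the relevant relational images are the singleton $f(x)$ and the filter $f[\F]$, the lemma says that $f$ is a $\mathbb{D}$-compact relation between these two spaces if and only if
\[
\lambda_{f^-Y}(\F)(x)\ \geq\ c_{\mathbb{D}}^{\{f(x)\}}\bigl(f[\F]\bigr)
\]
for every $\F\in\filter X$ and $x\in X$, where the measure on the right is computed in $(Y,\lambda_{fX})$. Now I would simplify both sides. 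By (\ref{eq:initial}), the left-hand side equals $\lambda_Y(f[\F])(f(x))$. By Theorem \ref{th:AdhJarecompact} applied to $\lambda_{fX}$, the right-hand side equals $(\Adh_{\mathbb{D}}\lambda_{fX})(f[\F])(f(x))$ --- alternatively, one may expand the adherences involved using Lemma \ref{lem:adhfX}, but this is not needed. Hence $f\colon(X,\lambda_{f^-Y})\rightrightarrows(Y,\lambda_{fX})$ is a $\mathbb{D}$-compact relation if and only if
\[
\lambda_Y\bigl(f[\F]\bigr)(f(x))\ \geq\ (\Adh_{\mathbb{D}}\lambda_{fX})\bigl(f[\F]\bigr)(f(x))\qquad\text{for all }\F\in\filter X,\ x\in X.
\]

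It remains to recognize this family of inequalities as (\ref{eq:Dquotient}). One direction is immediate: if $\lambda_Y\geq\Adh_{\mathbb{D}}\lambda_{fX}$, then in particular the displayed inequality holds for the filter $f[\F]$ at the point $f(x)$. For the converse, fix an arbitrary $\G\in\filter Y$ and $y\in Y$; since $f$ is onto, choose $x\in f^-(y)$ and put $\F:=f^-[\G]$, so that $f(x)=y$ and $f[\F]=\G$. The displayed inequality for this choice reads $\lambda_Y(\G)(y)\geq(\Adh_{\mathbb{D}}\lambda_{fX})(\G)(y)$, and since $\G$ and $y$ were arbitrary this is precisely $\lambda_Y\geq\Adh_{\mathbb{D}}\lambda_{fX}$, i.e. $f$ is $\mathbb{D}$-quotient.

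The argument is essentially a matter of composing Lemma \ref{thm:lem2}, (\ref{eq:initial}) and Theorem \ref{th:AdhJarecompact}; the step most prone to slips --- and the only genuinely substantive one --- is the last, where writing every filter on $Y$ as $f[\F]$ and every point of $Y$ as $f(x)$ upgrades the (a priori weaker-looking) compact-relation condition to the full inequality $\lambda_Y\geq\Adh_{\mathbb{D}}\lambda_{fX}$; this is exactly where surjectivity of $f$ is used. I do not anticipate any real obstacle beyond carefully tracking in which space each adherence and each measure of compactness is being computed.
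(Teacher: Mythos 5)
Your proposal is correct and follows essentially the same route as the paper: both directions rest on Lemma \ref{thm:lem2} combined with the description (\ref{eq:initial}) of $\lambda_{f^{-}Y}$ and Theorem \ref{th:AdhJarecompact} identifying $c_{\mathbb{D}}^{\{y\}}$ in $(Y,\lambda_{fX})$ with $\Adh_{\mathbb{D}}\lambda_{fX}$, with surjectivity used exactly as in the paper (writing an arbitrary filter on $Y$ as $f[f^{-}[\G]]$ and an arbitrary point as $f(x)$). Your reformulation as a single pointwise inequality is only a cosmetic repackaging of the paper's two-implication argument.
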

\begin{proof}
Assume $f$ is $\mathbb{D}$-quotient. Then given $\F\in\mathbb{F}X$
and $x\in X$, 
\[
\lambda_{f^{-}Y}(\F)(x)=\lambda_{Y}(f[\F])(f(x))\geq\Adh_{\mathbb{D}}\lambda_{fX}(f[\F])(f(x))=c_{\mathbb{D}}^{\{f(x)\}}f[\F]
\]
where the measure of $\mathbb{D}$-compactness is in $(Y,\lambda_{fX})$.
In view of Lemma \ref{thm:lem2}, $f:(X,\lambda_{f^{-}Y})\rightarrow(Y,\lambda_{fX})$
is a $\mathbb{D}$-compact relation. 

Conversely, assume that $f:(X,\lambda_{f^{-}Y})\rightarrow(Y,\lambda_{fX})$
is a $\mathbb{D}$-compact relation. Let $\G\in\mathbb{F}Y$ and $y\in Y$.
Since $f$ is onto, $\G=f[f^{-}[\G]]$, and there is $x\in f^{-}y$,
so that 
\[
\lambda_{Y}(\G)(y)=\lambda_{f^{-}Y}(f^{-}[\G])(x)\geq c_{\mathbb{D}}^{\{f(x)\}}\G,
\]
where the right hand side is measured in $(Y,\lambda_{fX})$. In view
of Theorem \ref{th:AdhJarecompact}, $\lambda_{Y}\geq\Adh_{\mathbb{D}}\lambda_{fX}$.
\end{proof}
Note that Corollary \ref{cor:Dcompactrelationschar} does not apply
to $\mathbb{D}$-quotient maps in general, for even if $X$ is an
approach space, $(Y,\lambda_{fX})$ generally fails to be.

The table below gathers the terminology we use for various instances
of $\mathbb{F}_{0}$-composable classes of filters:\bigskip{}

\begin{table}[h!]
\centering{}%
\begin{tabular}{|c|c|c|c|c|}
\hline 
Class $\mathbb{D}$ of filters & $\Adh_{\mathbb{D}}$-fixed spaces  & $\mathbb{D}$-compact filter & \emph{$\mathbb{D}$-}quotient map & $\mathbb{D}$-perfect map \tabularnewline
\hline 
$\mathbb{F}$  & \textbf{Psap} & Compact  & biquotient & Perfect\tabularnewline
(All filters)  & (pseudo-approach) &  &  & \tabularnewline
\hline 
$\mathbb{F}_{\wedge1}$ & \textbf{Hypoap} & Lindel\"of & weakly & inversely \tabularnewline
(countably deep) & (hypo-approach) &  & biquotient & Lindel\"of\tabularnewline
\hline 
$\mathbb{F}_{1}$  & \textbf{Parap} & Countably  & countably & Countably \tabularnewline
(Countably Based)  & (para-approach) & compact  & biquotient & perfect\tabularnewline
\hline 
$\mathbb{F}_{0}$  & \textbf{Prap} & Finitely  & hereditarily & Closed\tabularnewline
(Principal)  & (pre-approach) & compact & quotient & \tabularnewline
\hline 
\end{tabular}
\end{table}

\begin{prop}
Let $\mathbb{D}$ be an $\mathbb{F}_{0}$-composable class of filters.
A $\mathbb{D}$-perfect surjective map is $\mathbb{D}$-quotient.
In particular, in $\mathbf{Cap}$, surjective perfect maps are biquotient
and surjective closed maps are hereditarily quotient, hence quotient
in $\mathbf{Ap}$.\end{prop}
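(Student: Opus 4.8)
The plan is to reduce the statement, via Theorem \ref{th:AdhJarecompact}, to a single inequality, and then to extract that inequality from the $\mathbb{D}$-compactness of $f^{-}$. So I would take $f:(X,\lambda_X)\to(Y,\lambda_Y)$ surjective and $\mathbb{D}$-perfect, meaning that $f^{-}:Y\rightrightarrows X$ is a $\mathbb{D}$-compact relation. Since Theorem \ref{th:AdhJarecompact} gives $(\Adh_{\mathbb{D}}\lambda_{fX})(\G)(y)=c_{\mathbb{D}}^{\{y\}}\G$, with the measure of $\mathbb{D}$-compactness evaluated in $(Y,\lambda_{fX})$, proving that $f$ is $\mathbb{D}$-quotient amounts to establishing
\[
\lambda_Y(\G)(y)\geq c_{\mathbb{D}}^{\{y\}}\G
\]
for every $\G\in\mathbb{F}Y$ and $y\in Y$, where on the right the measure of $\mathbb{D}$-compactness is taken in $(Y,\lambda_{fX})$.

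Next I would unfold the right-hand side. By (\ref{eq:measureatset}) and then Lemma \ref{lem:adhfX} (the first place where surjectivity of $f$ is used),
\[
c_{\mathbb{D}}^{\{y\}}\G=\bigvee_{\mathbb{D}\ni\D\#\G}\adh_{fX}\D(y)=\bigvee_{\mathbb{D}\ni\D\#\G}\ \bigwedge_{x\in f^{-}(y)}\adh_X f^{-}[\D](x).
\]
The crucial observation is that for each $\D\in\mathbb{D}$ meshing with $\G$, the filter $f^{-}[\D]$ again belongs to $\mathbb{D}$, since $\mathbb{D}$ is $\mathbb{F}_0$-composable and $f^{-}$ is a relation, and that $f^{-}[\D]\#f^{-}[\G]$, which follows from $\D\#\G$ by (\ref{eq:productmeshR}) together with surjectivity of $f$. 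Hence each inner term $\bigwedge_{x\in f^{-}(y)}\adh_X f^{-}[\D](x)$ is at most $c_{\mathbb{D}}^{f^{-}(y)}f^{-}[\G]$, this time computed in $(X,\lambda_X)$, so passing to the supremum over $\D$ gives $c_{\mathbb{D}}^{\{y\}}\G\leq c_{\mathbb{D}}^{f^{-}(y)}f^{-}[\G]$. Finally, since $f^{-}:Y\rightrightarrows X$ is a $\mathbb{D}$-compact relation, Lemma \ref{thm:lem2} yields $c_{\mathbb{D}}^{f^{-}(y)}f^{-}[\G]\leq\lambda_Y(\G)(y)$, which closes the chain.

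The particular cases are obtained by specializing the class: $\mathbb{D}=\mathbb{F}$ gives that a surjective perfect map is biquotient, and $\mathbb{D}=\mathbb{F}_0$ that a surjective closed map is hereditarily quotient. For the last clause I would recall that $\Adh_{\mathbb{F}_0}$ is the reflector onto $\mathbf{Prap}$ and that $\mathbf{Ap}$ is a reflective subcategory of $\mathbf{Prap}$, so the $\mathbf{Ap}$-reflection of any $\mathbf{Cap}$-object is coarser than its $\mathbf{Prap}$-reflection; applied to $\lambda_{fX}$, the inequality $\lambda_Y\geq\Adh_{\mathbb{F}_0}\lambda_{fX}$ then forces $\lambda_Y$ above the $\mathbf{Ap}$-reflection of $\lambda_{fX}$ as well, that is, $f$ is quotient in $\mathbf{Ap}$.

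I do not expect any single deep step; the real care is bookkeeping: one must keep track of which ambient space each adherence and each measure of compactness is computed in --- the domain $(X,\lambda_X)$ versus the quotient space $(Y,\lambda_{fX})$ --- and notice that it is precisely the principality of the relation $f^{-}$ that lets $\mathbb{F}_0$-composability return $f^{-}[\D]$ to $\mathbb{D}$. The two appeals to surjectivity, in Lemma \ref{lem:adhfX} and in the meshing $f^{-}[\D]\#f^{-}[\G]$, are routine but must not be skipped. An equivalent route runs through Theorem \ref{th:Dquotient}: one verifies instead that $f:(X,\lambda_{f^{-}Y})\to(Y,\lambda_{fX})$ is a $\mathbb{D}$-compact relation by combining Lemma \ref{thm:lem2} with (\ref{eq:initial}), which leads to exactly the same inequality.
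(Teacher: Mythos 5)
Your argument is correct and is essentially the paper's own proof read in the opposite direction: the same chain of inequalities built from Lemma \ref{lem:adhfX}, the $\mathbb{F}_{0}$-composability giving $f^{-}[\D]\in\mathbb{D}$ with $f^{-}[\D]\#f^{-}[\G]$, and the $\mathbb{D}$-compactness of $f^{-}$ via Lemma \ref{thm:lem2}, yielding $\lambda_{Y}\geq\Adh_{\mathbb{D}}\lambda_{fX}$. Your explicit justification of the final clause (quotient in $\mathbf{Ap}$ via the $\mathbf{Ap}$-reflection being coarser than the $\mathbf{Prap}$-reflection) is a small but welcome addition that the paper leaves implicit.
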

\begin{proof}
Let $\F\in\mathbb{F}X$ and $x\in X$, and let $y:=f(x)$, $\G:=f[\F]$.
Then, in view of (\ref{eq:initial}), 
\[
\lambda_{f^{-}Y}\F(x)=\lambda_{Y}(\G)(y)\geq c_{\mathbb{D}}^{f^{-}y}f^{-}[\G]
\]
because $f^{-}:Y\rightrightarrows X$ is $\mathbb{D}$-compact. Moreover,
\[
c_{\mathbb{D}}^{f^{-}y}f^{-}[\G]=\bigvee_{\mathbb{D}\ni\D\#f^{-}[\G]}\bigwedge_{x\in f^{-}y}\adh_{X}\D(x)
\]
so that, if $\D\in\mathbb{D}(Y)$ with $\D\#\G$ then $f^{-}[\D]\#f^{-}[\G]$
and $f^{-}[\D]\in\mathbb{D}(X)$ by $\mathbb{F}_{0}$-composability,
so that
\[
\bigwedge_{x\in f^{-}y}\adh_{X}f^{-}[\D](x)\leq c_{\mathbb{D}}^{f^{-}y}f^{-}[\G]\leq\lambda_{Y}(\G)(y).
\]
 By Lemma \ref{lem:adhfX}, $\adh_{fX}\D(y)=\bigwedge_{x\in f^{-}y}\adh_{X}f^{-}[\D](x)$,
and we conclude that $\Adh_{\mathbb{D}}\lambda_{fX}\leq\lambda_{Y}$. 
\end{proof}

\section{Product of measures of compactness}

The main result to be applied in the next section is the following
extension from $\mathbf{Conv}$ to $\mathbf{Cap}$ of \cite[Theorem 1]{myn.applofcompact}:
\begin{thm}
\label{thm:mainproduct} Let $(X,\lambda_{X})$ be a convergence approach
space, $A\subset X$, $\alpha\in[0,\infty),$ and let $\F\in\mathbb{F}X.$
Let $\mathbb{D}$ be a composable class of filters that contains principal
filters. The following are equivalent:
\begin{enumerate}
\item $c_{\mathbb{D}}^{A}(\F)\leq\alpha$; 
\item For every convergence approach space $(Y,\lambda_{Y}),$ every $B\subset Y$,
and every $\G\in\mathbb{D}(Y)$, 
\[
c_{\mathbb{D}}^{A\times B}(\F\times\G)\leq\alpha\vee c_{\mathbb{F}}^{B}(\G);
\]

\item For every $\mathbb{D}$-based atomic topological approach space $Y$,
with non-isolated point $\infty$ and neighborhood filter $\N(\infty)$,
\[
c_{\mathbb{F}_{0}}^{A\times\{\infty\}}(\F\times\N(\infty))\leq\alpha.
\]

\end{enumerate}
\end{thm}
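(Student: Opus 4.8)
The plan is to prove the chain $(1)\Rightarrow(2)\Rightarrow(3)\Rightarrow(1)$, mirroring the structure of the $\mathbf{Conv}$ proof in \cite{myn.applofcompact} but carrying the $[0,\infty]$-valued bookkeeping throughout. The implication $(2)\Rightarrow(3)$ is immediate: given a $\mathbb{D}$-based atomic topological approach space $Y$ with non-isolated point $\infty$, apply $(2)$ with $B=\{\infty\}$ and $\G=\N(\infty)$. Since $\G\in\mathbb{D}(Y)$ by the $\mathbb{D}$-basedness hypothesis, and since $\infty$ is the limit of $\N(\infty)$ in the (topological, hence approach) space $Y$, we have $c_{\mathbb{F}}^{\{\infty\}}(\N(\infty))=\adh_Y\N(\infty)(\infty)=0$; thus $c_{\mathbb{D}}^{A\times\{\infty\}}(\F\times\N(\infty))\leq\alpha$, and since $\mathbb{F}_0\subseteq\mathbb{D}$ we get $c_{\mathbb{F}_0}^{A\times\{\infty\}}(\F\times\N(\infty))\leq c_{\mathbb{D}}^{A\times\{\infty\}}(\F\times\N(\infty))\leq\alpha$.

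For $(1)\Rightarrow(2)$, fix $(Y,\lambda_Y)$, $B\subset Y$, $\G\in\mathbb{D}(Y)$, and set $\beta:=\alpha\vee c_{\mathbb{F}}^{B}(\G)$. Take any $\mathbb{D}$-filter $\J$ on $X\times Y$ with $\J\#(\F\times\G)$; I must bound $\bigwedge_{(a,b)\in A\times B}\adh_{X\times Y}\J(a,b)$ by $\beta$. The idea is to use the calculus of relations from \eqref{eq:productmeshgeneral}: since $\J\#(\F\times\G)$, the filter $\J^{-}[\F]$ on $Y$ meshes with $\G$, and by composability $\J^{-}[\F]\in\mathbb{D}(Y)$ (here viewing each $J\in\J$ as a relation $X\rightrightarrows Y$ and using that $\mathbb{D}$ is composable, so $\mathbb{D}$-filters push forward along $\mathbb{D}$-relations, in particular along members of $\J$). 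Hence $\bigwedge_{b\in B}\adh_Y\J^{-}[\F](b)\leq c_{\mathbb{F}}^{B}(\G)\leq\beta$, so for each $\varepsilon>0$ there is $b_\varepsilon\in B$ and an ultrafilter $\V_\varepsilon\in\mathbb{U}(\J^{-}[\F])$ with $\lambda_Y(\V_\varepsilon)(b_\varepsilon)\leq\beta+\varepsilon$. Now $\V_\varepsilon$ being finer than $\J^{-}[\F]$ means (again by \eqref{eq:productmeshgeneral}, applied componentwise) that $\J[\V_\varepsilon]\#\F$, and by composability $\J[\V_\varepsilon]\in\mathbb{D}(X)$; moreover $\J[\V_\varepsilon]$ still meshes with $\F$. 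Applying hypothesis $(1)$ at the set $A$ to the $\mathbb{D}$-filter $\J[\V_\varepsilon]$ gives $\bigwedge_{a\in A}\adh_X\J[\V_\varepsilon](a)\leq c_{\mathbb{D}}^{A}(\F)\leq\alpha\leq\beta$, so there is $a_\varepsilon\in A$ and $\U_\varepsilon\in\mathbb{U}(\J[\V_\varepsilon])$ with $\lambda_X(\U_\varepsilon)(a_\varepsilon)\leq\beta+\varepsilon$. The final step is to assemble an ultrafilter on $X\times Y$, finer than $\J$, whose image filters are $\U_\varepsilon$ and $\V_\varepsilon$ (or ultrafilters finer than these), so that by \eqref{eq:CAL3}-type estimates — concretely, using that on the product the limit of a product ultrafilter is controlled by the max of the coordinate limits, which follows from $\lambda_{X\times Y}\leq\lambda_{X}\circ\pi_X\vee\lambda_Y\circ\pi_Y$ being the product structure — we obtain $\adh_{X\times Y}\J(a_\varepsilon,b_\varepsilon)\leq\lambda_X(\U_\varepsilon)(a_\varepsilon)\vee\lambda_Y(\V_\varepsilon)(b_\varepsilon)\leq\beta+\varepsilon$. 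Letting $\varepsilon\to0$ and taking the infimum over $A\times B$ yields $c_{\mathbb{D}}^{A\times B}(\F\times\G)\leq\beta$.

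For $(3)\Rightarrow(1)$, I argue by contraposition: suppose $c_{\mathbb{D}}^{A}(\F)>\alpha$, so there is a $\mathbb{D}$-filter $\D$ on $X$ with $\D\#\F$ and $\bigwedge_{a\in A}\adh_X\D(a)>\alpha$. I build the test space: let $Y=|X|\cup\{\infty\}$ as a set (or more economically a quotient), declare every point of $X$ isolated, and put on $\infty$ the neighborhood filter $\N(\infty)$ generated by $\{D\cup\{\infty\}:D\in\D\}$, turning $Y$ into an atomic topological (hence approach) space which is $\mathbb{D}$-based precisely because $\D\in\mathbb{D}$. The diagonal-type relation sends $\F$ on $X$ and $\N(\infty)$ on $Y$ into a product filter on $X\times Y$ that meshes with a principal filter witnessing non-compactness; one checks directly from \eqref{eq:final}-style computations that $c_{\mathbb{F}_0}^{A\times\{\infty\}}(\F\times\N(\infty))=\bigwedge_{a\in A}\adh_X\D(a)>\alpha$, contradicting $(3)$. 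The main obstacle I anticipate is the product-limit estimate in $(1)\Rightarrow(2)$: one must be careful that the coordinate ultrafilters $\U_\varepsilon$, $\V_\varepsilon$ are simultaneously compatible with a single ultrafilter on $X\times Y$ finer than $\J$, which requires choosing $\U_\varepsilon\in\mathbb{U}(J(V_\varepsilon))$ for a common $J\in\J$ and $V_\varepsilon\in\V_\varepsilon$ and then taking an ultrafilter on $X\times Y$ refining $\J$ and containing the graph restrictions $J\cap(U\times V)$ — a meshing-and-Zorn argument — together with verifying that $\pi_X$ and $\pi_Y$ of this ultrafilter refine $\U_\varepsilon$ and $\V_\varepsilon$ respectively; the $[0,\infty]$-valued version of this coordinatewise control is exactly where the convergence-space proof needs to be upgraded, and it is where \eqref{eq:CAL3} and the explicit form of the product structure in $\mathbf{Cap}$ do the work.
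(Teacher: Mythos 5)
Your implication $(1)\Rightarrow(2)$ has a genuine gap at the point where you apply hypothesis (1). After choosing the ultrafilter $\V_{\varepsilon}$ on $Y$ finer than the image filter of $\F$, you pull it back to $X$ and assert ``by composability $\J[\V_{\varepsilon}]\in\mathbb{D}(X)$'' (you mean the preimage filter $\J^{-}[\V_{\varepsilon}]$), and then apply (1) to it. But composability only transports filters that are already in $\mathbb{D}$: from $\H\in\mathbb{D}(Y)$ and $\J\in\mathbb{D}(X\times Y)$ one gets $\J^{-}[\H]\in\mathbb{D}(X)$. The ultrafilter $\V_{\varepsilon}$ is an arbitrary ultrafilter, not a $\mathbb{D}$-filter (take $\mathbb{D}=\mathbb{F}_{1}$ or $\mathbb{F}_{0}$ and $\V_{\varepsilon}$ free), so $\J^{-}[\V_{\varepsilon}]$ need not lie in $\mathbb{D}$ and hypothesis (1), which only bounds adherences of $\mathbb{D}$-filters meshing $\F$, cannot be invoked. (Your earlier claim that the image of $\F$ on $Y$ lies in $\mathbb{D}(Y)$ is false for the same reason, though harmless there, since only meshing with $\G$ is used.) The asymmetry between $c_{\mathbb{D}}$ in (1) and $c_{\mathbb{F}}$ in (2) forces the opposite order of operations, which is what the paper's proof does: first pull back the $\mathbb{D}$-filter $\G$ through $\J^{-}$ — here composability legitimately gives $\J^{-}[\G]\in\mathbb{D}(X)$, and it meshes $\F$ by (\ref{eq:productmeshgeneral}) — apply (1) to obtain $u_{\varepsilon}\in A$ and $\U_{\varepsilon}\in\mathbb{U}(\J^{-}[\G])$ with $\lambda_{X}(\U_{\varepsilon})(u_{\varepsilon})\leq\alpha+\varepsilon$; then push forward: $\J[\U_{\varepsilon}]$ meshes $\G$, and since statement (2) uses $c_{\mathbb{F}}^{B}(\G)$ over the class of \emph{all} filters, no membership in $\mathbb{D}$ is needed on the $Y$-side, yielding $b_{\varepsilon}\in B$ and $\W_{\varepsilon}\in\mathbb{U}(\J[\U_{\varepsilon}])$ with $\lambda_{Y}(\W_{\varepsilon})(b_{\varepsilon})\leq c_{\mathbb{F}}^{B}(\G)+\varepsilon$. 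Your order cannot be repaired without effectively reverting to this one.

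Two smaller points. The ``meshing-and-Zorn assembly'' you anticipate as the main obstacle is a non-issue: once $\W_{\varepsilon}\#\J[\U_{\varepsilon}]$, relation (\ref{eq:productmeshgeneral}) gives $\J\#(\U_{\varepsilon}\times\W_{\varepsilon})$, and since $\adh_{X\times Y}\J$ is an infimum over all filters meshing $\J$, the product filter itself gives $\adh_{X\times Y}\J(u_{\varepsilon},b_{\varepsilon})\leq\lambda_{X\times Y}(\U_{\varepsilon}\times\W_{\varepsilon})(u_{\varepsilon},b_{\varepsilon})=\lambda_{X}(\U_{\varepsilon})(u_{\varepsilon})\vee\lambda_{Y}(\W_{\varepsilon})(b_{\varepsilon})$ by the initial (product) structure; no single ultrafilter refining $\J$ is needed. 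Finally, your $(3)\Rightarrow(1)$ uses the right construction (the paper's: isolated points of $X$, $\N(\infty)$ generated by $\D$), but the actual content — the verification that $\bigwedge_{a\in A}\adh_{X\times Y}\Delta(a,\infty)>\alpha$ for the principal diagonal filter $\Delta$, via the case distinction on whether $p_{Y}[\H]$ meshes $\D$ — is left unproved, and the formula you cite, (\ref{eq:final}), is the final (quotient) structure and is irrelevant here; what is needed is the product structure $\lambda_{X\times Y}(\H)=\lambda_{X}(p_{X}[\H])\vee\lambda_{Y}(p_{Y}[\H])$. Also only the inequality $c_{\mathbb{F}_{0}}^{A\times\{\infty\}}(\F\times\N(\infty))\geq\bigwedge_{a\in A}\adh_{X}\D(a)$ is needed (and true); the equality you assert is not.
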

Note that the case where $\alpha=\infty$ is trivially true.
\begin{proof}
(1) $\then$ (2) Assume that $c_{\mathbb{D}}^{A}(\F)\leq\alpha.$
Let $\D\in\mathbb{D}(X\times Y$) with $\D\#(\F\times\G)$. By (\ref{eq:productmeshgeneral}),
$\D^{-}[\G]\#\F$ and moreover, $\D^{-}[\G]\in\mathbb{D}X$ because
$\G\in\mathbb{D}Y$ and $\mathbb{D}$ is composable. Since $c_{\mathbb{D}}^{A}(\F)\leq\alpha,$
for every $\epsilon>0$ there is a $\U_{\epsilon}\#\D^{-}[\G]$ and
$u_{\epsilon}\in A$ such that $\lambda_{X}(\U_{\epsilon})(u_{\epsilon})\leq\alpha+\epsilon.$
By (\ref{eq:productmeshgeneral}), , so that there is $\W_{\epsilon}\in\mathbb{U}(\D[\U_{\epsilon}])$
and $b_{\epsilon}\in B$ such that $\lambda_{Y}(\W_{\epsilon})(b_{\epsilon})\leq c_{\mathbb{F}}^{B}(\G)+\epsilon.$
Moreover, $\W_{\epsilon}\#\D[\U_{\epsilon}]$ so that $\D\#(\W_{\epsilon}\times\U_{\epsilon}),$
and thus 
\[
\bigwedge_{(a,b)\in A\times B}\adh_{A\times B}\,\D(a,b)\leq(\alpha+\epsilon)\vee(c_{\filter}^{B}(\G)+\epsilon).
\]
Since this is true for every $\epsilon>0,$ we obtain the result.

(2) $\then$ (3) is obvious because $\mathbb{F}_{0}\subseteq\mathbb{D}.$

(3) $\then$ (1) Assume that $c_{\mathbb{D}}^{A}(\F)>\alpha$, and
let $\beta$ be such that $c_{\mathbb{D}}^{A}(\F)>\beta>\alpha.$
Then there is a $\D\in\mathbb{D}$ with $\D\#\F$ such that 
\[
\bigwedge_{a\in A}\adh_{X}\D(a)\geq\beta.
\]

We construct a $\mathbb{D}$-based topological approach space with
underlying set $Y=X\cup\{\infty\}$ where $\infty\notin X$. Set every
point of $X$ to be isolated in $Y$, that is, for every $x\in X$,
$\lambda_{Y}(\F)(x)=\infty$ for every $\F\neq\{x\}^{\uparrow}$ and
$\lambda_{Y}(\{x\}^{\uparrow})(x)=0$. Let $\N_{Y}(\infty):=\D\wedge\{x\}^{\uparrow}\in\mathbb{D},$
that is, $\lambda_{Y}(\F)(\infty)=0$ if $\F\geq\N_{Y}(\infty)$ and
$\lambda_{Y}(\F)=\infty$ otherwise.

Let 
\[
\Delta:=\{(x,x):x\in X\}^{\uparrow}\in\mathbb{F}_{0}(X\times Y).
\]
 Note that $\Delta\#(\F\times\N_{Y}(\infty))$ because $\D\#\F$ in
$X$. We claim that 
\begin{equation}
\bigwedge_{a\in A}\adh_{X\times Y}\Delta(a,\infty)>\alpha\vee\lambda_{Y}(\N_{Y}(\infty))(\infty)=\alpha,\label{eq:claim}
\end{equation}
which yields $c_{\mathbb{F}_{0}}^{A\times\{\infty\}}(\F\times\N_{Y}(\infty))>\alpha.$ 

To verify (\ref{eq:claim}), note that for every $a\in A$,

\begin{eqnarray*}
\adh_{X\times Y}\Delta(a,\infty) & = & \bigwedge_{\Delta\leq\H}\lambda_{X\times Y}(\H)(a,\infty)\\
 & = & \bigwedge_{\Delta\leq\H}\lambda_{X}(p_{X}[\H])(a)\vee\lambda(p_{Y}[\H])(\infty).
\end{eqnarray*}
If $p_{Y}[\H]\neg\#\D$, then $\lambda_{Y}(p_{Y}[\H])(\infty)=\infty$
so $\adh_{X\times Y}\Delta(a,\infty)>\beta.$ Otherwise, $\D\#p_{Y}[\H]$
and $\Delta\leq\H$ so that $\D\#p_{X}[\H]$, and thus, $\lambda_{X}(p_{X}[\H])(a)\geq\beta$.
Either way, $\adh_{X\times Y}\Delta(a,\infty)\geq\beta$, proving
our claim.
\end{proof}
In order to apply this result to product of maps, we need the following
extension from $\mathbf{Conv}$ to $\mathbf{Cap}$ of \cite[Corollary 12]{myn.applofcompact}:
\begin{thm}
\label{thm:mainformaps} Let $\mathbb{D}$ be a composable class of
filters containing principal filters, and let $X$ and $Y$ be two
convergence approach spaces. The following are equivalent: 
\begin{enumerate}
\item $R:X\rightrightarrows Y$ is a $\mathbb{D}$-compact relation. 
\item For every $\mathbb{D}$-based convergence \emph{approach} space $Z$,
$R\times Id_{Z}:X\times Z\rightrightarrows Y\times Z$ is a $\mathbb{D}$-compact
relation. 
\item For every atomic topological $\mathbb{D}$-based approach space $Z,$
$R\times Id_{Z}:X\times Z\rightrightarrows Y\times Z$ is an $\mathbb{F}_{0}$-compact
relation. 
\end{enumerate}
\end{thm}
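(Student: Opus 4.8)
The plan is to prove Theorem~\ref{thm:mainformaps} by leveraging Lemma~\ref{thm:lem2}, which reduces the statement that a relation is $\mathbb{D}$-compact to the pointwise inequality $\lambda(\F)(x)\geq c_{\mathbb{D}}^{R(x)}R[\F]$, together with Theorem~\ref{thm:mainproduct} applied to the filter $\F$ and the point $x$. Indeed, since $\mathbb{D}$ is composable and contains principal filters, $\mathbb{D}$ is in particular $\mathbb{F}_{0}$-composable, so Lemma~\ref{thm:lem2} is available for $R$, for $R\times Id_{Z}$, and Theorem~\ref{thm:mainproduct} is available as stated.

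First I would prove $(1)\then(2)$. Assume $R$ is $\mathbb{D}$-compact and let $Z$ be a $\mathbb{D}$-based convergence approach space. By Lemma~\ref{thm:lem2} it suffices to show that $\lambda_{X\times Z}(\H)(x,z)\geq c_{\mathbb{D}}^{(R\times Id_{Z})(x,z)}\,(R\times Id_{Z})[\H]$ for every $\H\in\mathbb{F}(X\times Z)$ and $(x,z)$. Writing $\F:=p_{X}[\H]$ and $\G:=p_{Z}[\H]$, one has $\H\geq\F\times\G$, hence $\lambda_{X\times Z}(\H)(x,z)\geq\lambda_{X}(\F)(x)\vee\lambda_{Z}(\G)(z)$, and $(R\times Id_{Z})[\H]\geq R[\F]\times\G$, so $c_{\mathbb{D}}^{R(x)\times\{z\}}\bigl(R[\F]\times\G\bigr)$ dominates the right side (note $(R\times Id_{Z})(x,z)=R(x)\times\{z\}$). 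Because $Z$ is $\mathbb{D}$-based, $\lambda_{Z}(\G)(z)=\bigwedge_{\mathbb{D}\ni\G'\leq\G}\lambda_{Z}(\G')(z)$, so for each $\epsilon>0$ pick $\G_{\epsilon}\in\mathbb{D}(Z)$, $\G_{\epsilon}\leq\G$, with $\lambda_{Z}(\G_{\epsilon})(z)\leq\lambda_{Z}(\G)(z)+\epsilon$; then $c_{\mathbb{D}}^{R(x)\times\{z\}}(R[\F]\times\G)\leq c_{\mathbb{D}}^{R(x)\times\{z\}}(R[\F]\times\G_{\epsilon})$. Now apply Theorem~\ref{thm:mainproduct}, $(1)\then(2)$, with base space $X$, set $R(x)$ wait --- more precisely, apply it in the codomain: taking the convergence approach space to be $Z$, the set $B=\{z\}$, the filter $\G_{\epsilon}\in\mathbb{D}(Z)$, and starting from $c_{\mathbb{D}}^{R(x)}R[\F]\leq\lambda_{X}(\F)(x)$ (which holds by $\mathbb{D}$-compactness of $R$ and Lemma~\ref{thm:lem2}), we get $c_{\mathbb{D}}^{R(x)\times\{z\}}(R[\F]\times\G_{\epsilon})\leq\lambda_{X}(\F)(x)\vee c_{\mathbb{F}}^{\{z\}}(\G_{\epsilon})\leq\lambda_{X}(\F)(x)\vee\lambda_{Z}(\G_{\epsilon})(z)\leq\lambda_{X}(\F)(x)\vee\lambda_{Z}(\G)(z)+\epsilon$. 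Letting $\epsilon\to0$ and chaining the inequalities gives $(R\times Id_{Z})[\H]$-compactness as required. The implication $(2)\then(3)$ is immediate since an atomic topological $\mathbb{D}$-based approach space is in particular a $\mathbb{D}$-based convergence approach space, and a $\mathbb{D}$-compact relation is $\mathbb{F}_{0}$-compact by Lemma~\ref{lem:decreaselevelofcompactness} (using $\mathbb{F}_{0}\subseteq\mathbb{D}$).

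The main work is $(3)\then(1)$, and this is where I expect the obstacle to lie. I would argue by contraposition: suppose $R$ is not $\mathbb{D}$-compact, so by Lemma~\ref{thm:lem2} there are $\F\in\mathbb{F}X$, $x_{0}\in X$, and $\alpha:=\lambda_{X}(\F)(x_{0})<\infty$ with $c_{\mathbb{D}}^{R(x_{0})}R[\F]>\alpha$. Then by Theorem~\ref{thm:mainproduct}, $(1)\then(3)$ fails for the pair $(\F,x_{0})$ wait --- rather, the negation of $(3)$ in Theorem~\ref{thm:mainproduct} applied with $A=R(x_{0})$, $\alpha$, and the filter $R[\F]\in\mathbb{F}Y$: since $c_{\mathbb{D}}^{R(x_{0})}(R[\F])>\alpha$, there is a $\mathbb{D}$-based atomic topological approach space $Z$ with non-isolated point $\infty$ such that $c_{\mathbb{F}_{0}}^{R(x_{0})\times\{\infty\}}(R[\F]\times\N(\infty))>\alpha$. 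I then need to transfer this to show $R\times Id_{Z}$ is not $\mathbb{F}_{0}$-compact: using Lemma~\ref{thm:lem2} (with $\mathbb{D}=\mathbb{F}_{0}$), it suffices to exhibit a filter $\H$ on $X\times Z$ and a point with $\lambda_{X\times Z}(\H)(x_{0},\infty)<c_{\mathbb{F}_{0}}^{(R\times Id_{Z})(x_{0},\infty)}(R\times Id_{Z})[\H]$; the natural candidate is $\H=\F\times\N(\infty)$, for which $\lambda_{X\times Z}(\H)(x_{0},\infty)=\lambda_{X}(\F)(x_{0})\vee\lambda_{Z}(\N(\infty))(\infty)=\alpha\vee 0=\alpha$ (using that $Z$ is a product of the discrete-like structure on $X$... no --- here $Z$ has underlying set $Y\cup\{\infty\}$ from the construction, and $\N(\infty)$ is its neighborhood filter with $\lambda_{Z}(\N(\infty))(\infty)=0$), while $(R\times Id_{Z})[\F\times\N(\infty)]=R[\F]\times\N(\infty)$ and $(R\times Id_{Z})(x_{0},\infty)=R(x_{0})\times\{\infty\}$, so the right side is exactly $c_{\mathbb{F}_{0}}^{R(x_{0})\times\{\infty\}}(R[\F]\times\N(\infty))>\alpha$. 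The delicate points, which I would check carefully, are: that $(R\times Id_{Z})[\F\times\N(\infty)]$ genuinely equals $R[\F]\times\N(\infty)$ as filters on $Y\times Z$ (a routine but not entirely trivial verification using the definition of $R[\cdot]$ on products and that $Id_{Z}$ acts trivially), and that the space $Z$ furnished by Theorem~\ref{thm:mainproduct}$(3)$ is indeed simultaneously atomic, topological, and $\mathbb{D}$-based as required by hypothesis $(3)$ of the present theorem --- this is exactly the class of test spaces appearing in item (3) of Theorem~\ref{thm:mainproduct}, so the match is by design, but I would state it explicitly. This contradiction with $(3)$ completes the cycle $(1)\then(2)\then(3)\then(1)$.
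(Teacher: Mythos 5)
Your proposal is correct and takes essentially the same route as the paper: Lemma \ref{thm:lem2} combined with Theorem \ref{thm:mainproduct} $(1\then2)$ and the $\mathbb{D}$-based hypothesis for $(1\then2)$, and Theorem \ref{thm:mainproduct} $(3\then1)$ applied to the filter $R[\F]$ and the set $R(x)$ in $Y$ for $(3\then1)$, which you merely phrase contrapositively with the test space on $Y\cup\{\infty\}$. The only cosmetic point: the inequality $\lambda_{X\times Z}(\H)(x,z)\geq\lambda_{X}(p_{X}[\H])(x)\vee\lambda_{Z}(p_{Z}[\H])(z)$ should be justified by the explicit initial (product) structure in $\mathbf{Cap}$, where it is in fact an equality, rather than deduced from $\H\geq p_{X}[\H]\times p_{Z}[\H]$, since monotonicity of $\lambda$ runs in the opposite direction.
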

\begin{proof}
$(1\Rightarrow2)$ Let $R:X\rightrightarrows Y$ be a $\mathbb{D}$-compact
relation. We  show that
\[
\lambda_{X}(\F)(x)\vee\lambda_{Z}(\G)(z)\geq c_{\mathbb{D}}^{R(x)\times\{z\}}(R[\F]\times\G)
\]
 for every $\F\in\filter X$, $\G\in\filter Z$, $x\in X$, and $z\in Z$.
To this end, note that for every $\D\in\mathbb{D}$ such that $\D\leq\G$, 

\begin{eqnarray*}
c_{\mathbb{D}}^{R(x)\times\{z\}}(R[\F]\times\G) & \leq & c_{\mathbb{D}}^{R(x)\times\{z\}}(R[\F]\times\D)\\
 & \leq & c_{\mathbb{D}}^{R(x)}(R[\F])\vee c_{\mathbb{F}}^{\{z\}}(\D)\mbox{ by Theorem \ref{thm:mainproduct}}\\
 & \leq & \lambda_{X}(\F)(x)\vee\lambda_{Z}(\D)(z)\mbox{ so that}\\
c_{\mathbb{D}}^{R(x)\times\{z\}}(R[\F]\times\G) & \leq & \bigwedge_{\mathbb{D}\ni\D\leq\G}\left(\lambda_{X}(\F)(x)\vee\lambda_{Z}(\D)(z)\right)\\
 & \leq & \lambda_{X}(\F)(x)\vee\lambda_{Z}(\G)(z)\mbox{, because \ensuremath{Z\mbox{ is \ensuremath{\mathbb{D}}-based}.}}
\end{eqnarray*}

$(2\Rightarrow3)$ Is trivial.

$(3\Rightarrow1)$ Let $\F\in\mathbb{F}X$ and $x\in X$. We want
to show that 
\[
c_{\mathbb{D}}^{R(x)}R[\F]\leq\alpha:=\lambda_{X}(\F)(x).
\]
Since $R\times Id_{Z}$ is $\mathbb{F}_{0}$-compact for every topological
$\mathbb{D}$-based atomic approach space $Z$, then for every such
$(Z,\lambda_{Z})$ with non-isolated point $\infty$, 
\[
c_{\mathbb{F}_{0}}^{R(x)\times\{\infty\}}(R[\F]\times\N(\infty))\leq\alpha\vee0=\alpha,
\]
and Theorem \ref{thm:mainproduct} applies to the effect that that
$c_{\mathbb{D}}^{R(x)}R[\F]\leq\alpha$. \end{proof}
\begin{rem}[on infinite products]
 \cite[Theorem 14]{mynardmeasureCAP} provides a Tychonoff Theorem
for the general measure $c_{\mathbb{F}}^{\A}(\F)$ as defined in (\ref{eq:Dcompmeasure}).
However, there is an obvious error in the proof tantamount to writing
that 
\[
\bigvee_{i\in I}a_{i}+\bigvee_{i\in I}b_{i}=\bigvee_{i\in I}(a_{i}+b_{i})
\]
which is obviously false. This problem disappears when $\A$ is restricted
to a family of subsets, rather than functions. In this case, $\bigvee_{i\in I}\A_{i}$
becomes $\prod_{i\in I}\A_{i}$. Thus \cite[Theorem 14]{mynardmeasureCAP}
should read:\end{rem}
\begin{thm}
Let $(X_{i},\lambda_{i})_{i\in I}$ be a family of convergence approach
spaces, let $\A_{i}\subset2^{X_{i}}$, and let $\F$ be a filter on
$\prod_{i\in I}X_{i}$. Then
\[
c^{\prod_{i\in I}\A_{i}}(\F)=\bigvee_{i\in I}c^{\A_{i}}(p_{i}[\F]),
\]
where $p_{i}:\prod_{i\in I}X_{i}\to X_{i}$ is the $i^{th}$-projection.
\end{thm}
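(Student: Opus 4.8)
The plan is to prove the two inequalities separately, using the product theorem (Theorem \ref{thm:mainproduct}) with $\mathbb{D}=\mathbb{F}$ as the main engine for the finite case and then bootstrapping to the infinite case via a standard directed-supremum argument over finite subsets of $I$. First I would reduce to the case of \emph{principal} filters on the factors: since the measure $c^{\A_i}(p_i[\F])$ only depends on $\A_i\subset 2^{X_i}$ through the sets involved, and since (\ref{eq:measureatset}) expresses the measure at a set as a supremum over filters meshing with $p_i[\F]$, the relevant quantities are governed by the identity $c^{A}_{\mathbb{F}}(\H)=\bigvee_{\mathbb{F}\ni\D\#\H}\bigwedge_{a\in A}\adh_{\lambda}\D(a)$ from (\ref{eq:measureatset}). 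The key observation making products work is the relation calculus: for a filter $\D$ on $\prod_{i\in I}X_i$ meshing with $\F$, each projection $p_i[\D]$ meshes with $p_i[\F]$, and conversely the meshing condition $(\prod_i \A_i)\#\D$ transfers to the factors via (\ref{eq:productmeshgeneral}) applied to the projections $p_i$.

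For the inequality $c^{\prod_i\A_i}(\F)\ge\bigvee_i c^{\A_i}(p_i[\F])$, I would fix $i$ and an arbitrary $\D_i\in\mathbb{F}X_i$ with $\D_i\#p_i[\F]$; then $p_i^{-}[\D_i]$ is a filter on $\prod X_j$ meshing with $\F$ (by (\ref{eq:productmeshR}) applied to the graph of $p_i$), and I would check that $\adh\,p_i^{-}[\D_i]$ at a point of $\prod\A_j$ dominates $\adh\,\D_i$ at its $i$th coordinate, which gives $\bigwedge_{a\in A_i}\adh_{\lambda_i}\D_i(a)\le c^{\prod_j\A_j}(\F)$ after taking infima over the product set (using that for any $t\in\prod_j\A_j$ one can vary only the $i$th coordinate). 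Taking suprema over $\D_i$ and then over $i$ yields the $\ge$ direction.

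For the reverse inequality $c^{\prod_i\A_i}(\F)\le\bigvee_i c^{\A_i}(p_i[\F])$, I would take $\D\in\mathbb{F}(\prod X_i)$ with $\D\#\F$ and a point $t=(t_i)_{i}\in\prod_i\A_i$, and use the characterization of adherence via ultrafilters: $\adh\,\D(t)=\bigwedge_{\W\in\mathbb{U}(\D)}\lambda(\W)(t)$, combined with the fact (from the product convergence-approach structure on $\prod X_i$, which is the initial structure for the projections, cf.\ (\ref{eq:initial})) that $\lambda(\W)(t)=\bigvee_i\lambda_i(p_i[\W])(t_i)$. Since each $p_i[\W]\#p_i[\F]$, this reduces estimating $\adh\,\D(t)$ to a coordinatewise computation; the subtlety is that the infimum over $t\in\prod_i\A_i$ must be taken \emph{after} forming the supremum over $i$, and here the finite product theorem is invoked to control $c^{A_i\times(\text{rest})}$. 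Concretely, for finite $I$ an easy induction using Theorem \ref{thm:mainproduct} (with $\mathbb{D}=\mathbb{F}$, $\alpha=c^{A_i}(p_i[\F])$, $B=\prod_{j\ne i}\A_j$, $\G=p_j$'s pushforward-type filter) gives the inequality; for infinite $I$, one writes $\prod_i\A_i$ as the directed intersection of the cylinder families over finite $J\subset I$, notes that $c^{\prod_{i\in J}\A_i\times\prod_{i\notin J}X_i}(\F)=c^{\prod_{i\in J}\A_i}(p_J[\F])$ by a projection argument, and passes to the infimum over $J$.

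The main obstacle I anticipate is the infinite-product step: the product convergence-approach structure on $\prod_{i\in I}X_i$ is the initial structure for all projections simultaneously, and one must verify that an ultrafilter $\W$ refining $\D$ on the product satisfies $\lambda(\W)(t)=\bigvee_{i\in I}\lambda_i(p_i[\W])(t_i)$ \emph{including} the possibility that this supremum over an infinite index set is attained or approached only in the limit — this is exactly where the flawed argument in \cite[Theorem 14]{mynardmeasureCAP} broke down for general $\A\subset[0,\infty]^X$, and the reason restricting to $\A_i\subset 2^{X_i}$ repairs it is that each summand $\phi=\theta_{A_i}$ contributes only the values $0$ or $\infty$, so the offending identity $\bigvee a_i+\bigvee b_i=\bigvee(a_i+b_i)$ is replaced by the harmless $\bigvee_i\adh\D(t_i)\vee\bigvee_i\theta_{A_i}(t_i)=\bigvee_i(\adh\D(t_i)\vee\theta_{A_i}(t_i))$, which is valid since $\vee$ distributes over $\bigvee$. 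I would make sure to highlight this point, as it is the crux of why the corrected statement is true.
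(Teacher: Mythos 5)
Your $\ge$ direction is fine, and your diagnosis of why restricting $\A$ to families of \emph{sets} neutralizes the sup-additivity problem is essentially right; but the finite-to-infinite bootstrap that carries your $\le$ direction has a genuine gap. The identity you rely on, $c^{\prod_{i\in J}\A_{i}\times\prod_{i\notin J}X_{i}}(\F)=c^{\prod_{i\in J}\A_{i}}(p_{J}[\F])$, is false: the left-hand side is computed in the full product, where $\adh\D$ sees \emph{all} coordinates, so it also carries the quantities $c^{X_{i}}(p_{i}[\F])$ for $i\notin J$. For instance, take $I=\{1,2\}$, $J=\{1\}$, $X_{1}=\{*\}$, $\A_{1}=\{X_{1}\}$, $X_{2}=\mathbb{N}$ discrete (viewed as a topological approach space), and $\F=\{X_{1}\times X_{2}\}^{\uparrow}$: any ultrafilter $\U$ of the product finer than $\{X_{1}\}^{\uparrow}\times\V$, with $\V$ a free ultrafilter on $X_{2}$, meshes with $\F$ and satisfies $\lambda(\U)(t)=\infty$ for every $t$, so the left-hand side is $\infty$, while $c^{\A_{1}}(p_{1}[\F])=0$. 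Moreover, even with a corrected identification, ``passing to the infimum over $J$'' is not available: as $J$ grows the cylinder sets shrink, so the corresponding measures \emph{increase}, and what you would need is that the measure at the box $\prod_{i\in I}A_{i}$ is bounded above by the supremum over finite $J$ of the cylinder measures; this does not follow formally from the box being the decreasing intersection of the cylinders, since an infimum of $\adh\D$ over a set does not pass to a decreasing intersection of sets. That non-formal passage is exactly the content of the theorem for infinite $I$ (a Tychonoff-type statement is never a formal consequence of its finite case), and your sketch supplies no argument for it. A smaller slip in the same direction: for an arbitrary $\W\in\mathbb{U}(\D)$ with merely $\D\#\F$, the projection $p_{i}[\W]$ need \emph{not} mesh with $p_{i}[\F]$, as your sketch asserts.

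The statement can be proved directly and uniformly in $I$ (which is what the paper intends: the argument of \cite[Theorem 14]{mynardmeasureCAP} with the offending interchange rendered harmless by indicator functions), so no finite/infinite split is needed. Given $\D\#\F$ and a box $\prod_{i}A_{i}$ with $A_{i}\in\A_{i}$, choose a \emph{single} ultrafilter $\U$ finer than both $\D$ and $\F$. Then $\adh\D\leq\lambda(\U)$ pointwise, the initial product structure gives $\lambda(\U)(t)=\bigvee_{i\in I}\lambda_{i}(p_{i}[\U])(t_{i})$, and the interchange $\bigwedge_{t\in\prod_{i}A_{i}}\bigvee_{i\in I}f_{i}(t_{i})=\bigvee_{i\in I}\bigwedge_{a\in A_{i}}f_{i}(a)$ (for $\epsilon>0$ pick $t_{i}^{\epsilon}\in A_{i}$ with $f_{i}(t_{i}^{\epsilon})\leq\bigwedge_{a\in A_{i}}f_{i}(a)+\epsilon$ simultaneously in all coordinates) yields $\bigwedge_{t\in\prod_{i}A_{i}}\adh\D(t)\leq\bigvee_{i\in I}\bigwedge_{a\in A_{i}}\lambda_{i}(p_{i}[\U])(a)\leq\bigvee_{i\in I}c^{\A_{i}}(p_{i}[\F])$, because each $p_{i}[\U]$ is an ultrafilter finer than $p_{i}[\F]$, so that $\adh p_{i}[\U]=\lambda_{i}(p_{i}[\U])$. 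Taking suprema over boxes and over $\D\#\F$ gives the $\leq$ inequality for arbitrary $I$, and your $\geq$ argument completes the proof. This simultaneous-$\epsilon$ interchange is the legitimate substitute for the false identity $\bigvee_{i}a_{i}+\bigvee_{i}b_{i}=\bigvee_{i}(a_{i}+b_{i})$ discussed in the remark; a finite-subproduct limit argument does not reproduce it.
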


\section{Applications}

Taking $\F=\{X\}$, $A=X$, $\G=\{Y\}$, $B=Y$ in Theorem \ref{thm:mainproduct},
we obtain an extension to $\mathbf{Cap}$ in terms of measure of compactness
of the classical topological fact that a product of a compact space
with a space that is respectively compact, countably compact, or Lindel\"of
is also compact, countably compact, or Lindel\"of, respectively:
\begin{cor}
Let $\mathbb{D}$ be a composable class of filters containing principal
filters, and let $X$ and $Y$ be two convergence approach spaces.
\[
c_{\mathbb{D}}(X\times Y)\leq c_{\mathbb{D}}X\vee c_{\mathbb{F}}Y.
\]

\end{cor}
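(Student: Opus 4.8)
The plan is to read the inequality off directly from Theorem~\ref{thm:mainproduct}, so the only real work is bookkeeping: identifying the three quantities in the statement as instances of those appearing in that theorem. Recall, via the abuse of notation introduced after~(\ref{eq:measureatset}), that $c_{\mathbb{D}}X$ stands for $c_{\mathbb{D}}^{X}(\{X\}^{\uparrow})$, that $c_{\mathbb{F}}Y$ stands for $c_{\mathbb{F}}^{Y}(\{Y\}^{\uparrow})$, and that $c_{\mathbb{D}}(X\times Y)$ stands for $c_{\mathbb{D}}^{X\times Y}(\{X\times Y\}^{\uparrow})$, where the whole space sits in the ``set'' slot and the corresponding principal filter sits in the ``filter'' slot. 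Since the product filter $\{X\}^{\uparrow}\times\{Y\}^{\uparrow}$ on $X\times Y$ is generated by $X\times Y$, it coincides with $\{X\times Y\}^{\uparrow}$, so all three quantities are genuinely of the form occurring in Theorem~\ref{thm:mainproduct} for the choices $A=X$, $\F=\{X\}^{\uparrow}$, $B=Y$, $\G=\{Y\}^{\uparrow}$.

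Next I would set $\alpha:=c_{\mathbb{D}}X$. If $\alpha=\infty$ the claimed inequality is trivial (as noted right after Theorem~\ref{thm:mainproduct}), so assume $\alpha\in[0,\infty)$. Then $c_{\mathbb{D}}^{X}(\{X\}^{\uparrow})\leq\alpha$, which is exactly item~(1) of Theorem~\ref{thm:mainproduct} for the filter $\F=\{X\}^{\uparrow}$ and the set $A=X$; and the hypotheses of that theorem ($\mathbb{D}$ composable, containing the principal filters) are precisely those assumed in the corollary. I would then invoke the implication $(1)\then(2)$ of Theorem~\ref{thm:mainproduct} with the convergence approach space $(Y,\lambda_Y)$, the subset $B=Y$, and the filter $\G=\{Y\}^{\uparrow}$, which lies in $\mathbb{D}(Y)$ exactly because $\mathbb{D}$ contains the principal filters. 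This gives
\[
c_{\mathbb{D}}^{X\times Y}\bigl(\{X\}^{\uparrow}\times\{Y\}^{\uparrow}\bigr)\leq\alpha\vee c_{\mathbb{F}}^{Y}(\{Y\}^{\uparrow}),
\]
and rewriting both sides via the identifications recorded above yields $c_{\mathbb{D}}(X\times Y)\leq c_{\mathbb{D}}X\vee c_{\mathbb{F}}Y$.

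There is no genuine obstacle here: the entire content is already packaged in Theorem~\ref{thm:mainproduct}, and the proof is a one-line specialization. The only points requiring a moment's care are the unraveling of the notation $c_{\mathbb{D}}X$, $c_{\mathbb{F}}Y$, $c_{\mathbb{D}}(X\times Y)$ into the general measure of~(\ref{eq:Dcompmeasure}), the observation that $\{X\}^{\uparrow}\times\{Y\}^{\uparrow}=\{X\times Y\}^{\uparrow}$, and the verification that the principal filter $\{Y\}^{\uparrow}$ is an admissible choice of $\G$ in part~(2) of the theorem, i.e. that $\{Y\}^{\uparrow}\in\mathbb{D}(Y)$.
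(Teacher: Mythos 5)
Your proposal is correct and is essentially the paper's own argument: the corollary is obtained by specializing Theorem~\ref{thm:mainproduct} with $\F=\{X\}^{\uparrow}$, $A=X$, $\G=\{Y\}^{\uparrow}$, $B=Y$, exactly as you do. Your extra bookkeeping (handling $\alpha=\infty$, noting $\{Y\}^{\uparrow}\in\mathbb{D}(Y)$, and identifying $\{X\}^{\uparrow}\times\{Y\}^{\uparrow}=\{X\times Y\}^{\uparrow}$) is just a careful spelling-out of what the paper leaves implicit.
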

In particular, (for $\mathbb{D}=\mathbb{F}_{1}$) the measure of countable
compactness (in the sense of \cite{baekelandlowen}) of a product
of two spaces is not larger than the supremum of the measure of countable
compactness and measure of compactness of the factors. Similarly,
for $\mathbb{D}=\mathbb{F}_{\wedge1}$, the measure of Linde\"of
(in the sense of \cite{baekelandlowen.LS}) of a product of two spaces
is not larger than the supremum of the measure of Linde\"of and measure
of compactness of the factors. 

Both instances appear to be new, even if they are probably part of
the folklore on approach spaces.

On the other hand, applying Theorem \ref{thm:mainproduct} with $\F=\{X\}$,
$\alpha=0$, and $A=X$, yields the following generalization of the
Kuratowski-Mr\`owka characterization of compactness:
\begin{cor}
Let $\mathbb{D}$ be a composable class of filters containing principal
filters, and let $X$ be a convergence approach space. Then the following
are equivalent:
\begin{enumerate}
\item $X$ is $\mathbb{D}$-compact; 
\item For every $\mathbb{D}$-based convergence approach space $(Y,\lambda_{Y}),\, p_{Y}:X\times Y\to Y$
is $\mathbb{D}$-perfect; 
\item For every atomic $\mathbb{D}$-based atomic topological (approach)
space $Y,\, p_{Y}:X\times Y\to Y$ is closed. 
\end{enumerate}
\end{cor}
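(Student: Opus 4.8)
The plan is to derive the corollary from Theorem~\ref{thm:mainformaps}, applied to one well-chosen relation. Let $\{*\}$ denote the terminal (one-point) object of $\mathbf{Cap}$ (it exists since $\mathbf{Cap}$ is cartesian closed), let $!:X\to\{*\}$ be the unique contraction, and put $R:=\;!^{-}:\{*\}\rightrightarrows X$, so that $R(*)=X$. The first task is to observe that $R$ is a $\mathbb{D}$-compact relation if and only if $X$ is $\mathbb{D}$-compact: since $\{*\}$ carries a single non-degenerate filter and only the subsets $\emptyset$ and $\{*\}$, the inequality defining $\mathbb{D}$-compactness of $R$ reduces to the single instance
\[
c_{\mathbb{D}}^{R(*)}\bigl(R[\{*\}^{\uparrow}]\bigr)\le c_{\mathbb{D}}^{\{*\}}(\{*\}^{\uparrow}),
\]
that is, $c_{\mathbb{D}}^{X}(\{X\}^{\uparrow})\le 0$, the right-hand side being $0$ by (\ref{eq:CAL1}). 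So this instance is exactly the statement that $c_{\mathbb{D}}^{X}X=0$, i.e.\ that $X$ is $\mathbb{D}$-compact, which identifies condition (1) of the corollary with condition (1) of Theorem~\ref{thm:mainformaps} for $R$.

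Next I would match conditions (2) and (3). For any $\mathbf{Cap}$-space $Y$ there is the canonical isomorphism $\{*\}\times Y\cong Y$ in $\mathbf{Cap}$, and under it the relation $R\times Id_{Y}:\{*\}\times Y\rightrightarrows X\times Y$ corresponds exactly to $p_{Y}^{-}:Y\rightrightarrows X\times Y$ — both assign to a point its whole fibre $X\times\{y\}$. A $\mathbf{Cap}$-isomorphism preserves every property in sight: $\{*\}\times Y$ is $\mathbb{D}$-based iff $Y$ is, and $R\times Id_{Y}$ is a $\mathbb{D}$-compact (resp.\ $\mathbb{F}_{0}$-compact) relation iff $p_{Y}^{-}$ is. By definition $p_{Y}$ is $\mathbb{D}$-perfect exactly when $p_{Y}^{-}$ is a $\mathbb{D}$-compact relation, and by Proposition~\ref{prop:closedF0compact} $p_{Y}$ is closed exactly when $p_{Y}^{-}$ is an $\mathbb{F}_{0}$-compact relation. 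Hence condition (2) (resp.\ (3)) of Theorem~\ref{thm:mainformaps} for $R$ is precisely condition (2) (resp.\ (3)) of the corollary. Since $\mathbb{D}$ is a composable class containing the principal filters, Theorem~\ref{thm:mainformaps} applies and delivers the equivalence of (1), (2) and (3).

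The two points that need care are both in these reductions rather than being substantive. In the first step one must still dispatch the degenerate filter on $\{*\}$ and the subset $A=\emptyset$; both are harmless, since the degenerate filter meshes with no filter (so it contributes $0$) and, when $X\neq\emptyset$, both sides of the inequality at $A=\emptyset$ equal $\infty$ (the case $X=\emptyset$ being trivial). In the second step one should confirm at the level of underlying set maps that $y\leftrightarrow(*,y)$ does send $(R\times Id_{Y})(*,y)=R(*)\times\{y\}=X\times\{y\}$ onto $p_{Y}^{-}(y)=X\times\{y\}$, which is transparent. Alternatively, one can bypass Theorem~\ref{thm:mainformaps} and argue straight from Theorem~\ref{thm:mainproduct} with $\F=\{X\}^{\uparrow}$, $A=X$ and $\alpha=0$: for (1)$\Rightarrow$(2) one uses Lemma~\ref{thm:lem2} to replace $\mathbb{D}$-compactness of $p_{Y}^{-}$ by the pointwise bound $\lambda_{Y}(\G)(y)\ge c_{\mathbb{D}}^{X\times\{y\}}(\{X\}^{\uparrow}\times\G)$ and the $\mathbb{D}$-basedness of $Y$ to reduce an arbitrary $\G$ to $\mathbb{D}$-filters, while for (3)$\Rightarrow$(1), negating (3), the contrapositive of the implication (3)$\Rightarrow$(1) of Theorem~\ref{thm:mainproduct} produces an atomic $\mathbb{D}$-based topological approach space $Y$ with $c_{\mathbb{F}_{0}}^{X\times\{\infty\}}(\{X\}^{\uparrow}\times\N(\infty))>0$, at which $p_{Y}$ fails to be closed since $\lambda_{Y}(\N(\infty))(\infty)=0$.
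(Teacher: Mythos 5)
Your proposal is correct, but it takes a genuinely different route from the paper's. The paper proves the corollary directly from Theorem \ref{thm:mainproduct}: for $(1\Rightarrow 2)$ it applies that theorem with $\F=\{X\}$, $A=X$, $\alpha=0$ to each $\mathbb{D}$-filter $\D\leq\G$ and then uses $\mathbb{D}$-basedness of $Y$ to pass to an arbitrary $\G$, and for $(3\Rightarrow 1)$ it reads closedness of $p_{Y}$ as the bound $c_{\mathbb{F}_{0}}^{X\times\{\infty\}}(X\times\N(\infty))\leq 0$ and invokes the implication $(3\Rightarrow 1)$ of Theorem \ref{thm:mainproduct}; this is in substance exactly the ``alternative'' argument you sketch at the end. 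Your primary route instead exhibits the corollary as a formal instance of Theorem \ref{thm:mainformaps} applied to the relation $!^{-}:\{*\}\rightrightarrows X$: the identification of $\mathbb{D}$-compactness of this relation with $\mathbb{D}$-compactness of $X$ (you correctly dispose of the degenerate filter and the case $A=\emptyset$), and the transfer of conditions (2) and (3) along the $\mathbf{Cap}$-isomorphism $\{*\}\times Y\cong Y$, using Proposition \ref{prop:closedF0compact} for (3), are all routine; note only that the isomorphism-invariance of ``$\mathbb{D}$-compact relation'' tacitly uses that images of $\mathbb{D}$-filters under (graphs of) bijections stay in $\mathbb{D}$, which holds because $\mathbb{D}$ is composable and contains the principal filters, hence is $\mathbb{F}_{0}$-composable. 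What your route buys is conceptual economy: the Kuratowski--Mr\'owka statement appears literally as the terminal-object case of the product theorem for relations (equivalently, the $Y=\{*\}$ case of Corollary \ref{cor:Dperfectprod}), which reinforces the paper's unifying theme; what the paper's direct route buys is that it avoids the terminal-object and isomorphism bookkeeping and shows explicitly where $\mathbb{D}$-basedness of $Y$ enters. (A small aside: the terminal object of $\mathbf{Cap}$ exists because $\mathbf{Cap}$ is a topological construct over $\mathbf{Set}$; cartesian closedness is not what provides it.)
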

\begin{proof}
$\left(1\then2\right):$ To see that $p_{Y}^{-}:Y\rightrightarrows X\times Y$
is $\mathbb{D}$-compact, we need to show that 
\[
c_{\mathbb{D}}^{X\times\{y\}}(X\times\G)\leq\lambda_{Y}(\G)(y)
\]
for any $\G\in\mathbb{F}Y$ and $y\in Y$. For each $\D\in\mathbb{D}$
with $\D\leq\G$, Theorem \ref{thm:mainproduct} applies for $\F=\{X\}$,
$A=\{X\}$, $\G=\D$, $B=\{y\}$ and $\alpha=c_{\mathbb{D}}^{X}X=0$
to the effect that 
\[
c_{\mathbb{D}}^{X\times\{y\}}(X\times\G)\leq c_{\mathbb{D}}^{X\times\{y\}}(X\times\D)\leq c_{\mathbb{F}}^{\{y\}}(\D)\leq\lambda_{Y}(\D)(y)
\]
so that 
\[
c_{\mathbb{D}}^{X\times\{y\}}(X\times\G)\leq\bigwedge_{\mathbb{D\ni\D\leq\G}}\lambda_{Y}(\D)(y)=\lambda_{Y}(\G)(y).
\]

$\left(2\then3\right)$ is clear, and $(3\then1)$ because $(3)$
means that for every $\mathbb{D}$-based atomic topological approach
space $Y$, with non-isolated point $\infty,$ 
\[
c_{\mathbb{F}_{0}}^{A\times\{\infty\}}(X\times\N(\infty))\leq c_{\mathbb{F}}^{\{\infty\}}(\N(\infty))=0
\]
 so that Theorem \ref{thm:mainproduct} applies to the effect that
$c_{\mathbb{D}}^{X}X=0$.
\end{proof}
In particular, when $\mathbb{D}$ ranges over the classes $\mathbb{F}$,
$\mathbb{F}_{1}$ and $\mathbb{F}_{\wedge1}$ respectively, we obtain
the instances below. By analogy with the case of topological spaces,
we call \emph{first-countable }an $\mathbb{F}_{1}$-based convergence
approach space, and a $P$-\emph{convergence approach space }one that
is $\mathbb{F}_{\wedge1}$-based. On the other hand, we call an $\mathbb{F}_{0}$-based
convergence approach space \emph{finitely generated}, because a pre-approach
space is finitely generated in the sense of \cite{Lowen97} if and
only if it is finitely generated in this sense.

\begin{cor}
Let $(X,\lambda_{X})$ be a convergence approach space. Then the following
are equivalent:
\begin{enumerate}
\item $X$ is compact; 
\item For every convergence approach space $(Y,\lambda_{Y}),\, p_{Y}:X\times Y\to Y$
is perfect; 
\item For every atomic topological (approach) space $Y,\, p_{Y}:X\times Y\to Y$
is closed. 
\end{enumerate}
\end{cor}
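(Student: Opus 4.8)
The plan is to read off this statement as the special case $\mathbb{D}=\mathbb{F}$ of the preceding corollary; all that is needed is to check that $\mathbb{F}$ satisfies the hypotheses there and that the $\mathbb{F}$-decorated terminology reduces to the plain terminology.

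First I would check that $\mathbb{F}$, the class of all filters, is a composable class containing the principal filters. That $\mathbb{F}_0\subseteq\mathbb{F}$ is immediate, and composability is automatic: for $\D\in\mathbb{F}X$ and $\J\in\mathbb{F}(X\times Y)$, the family $\J[\D]$ is by construction a (possibly degenerate) filter on $Y$, hence lies in $\mathbb{F}Y$ by the standing convention that every class of filters contains the degenerate filter of each set.

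Next I would record three identifications. A subset $A\subseteq X$ is $\mathbb{F}$-compact precisely when it is compact, by the terminological convention fixed right after~(\ref{eq:measureatset}); a map between convergence approach spaces is $\mathbb{F}$-perfect precisely when it is perfect, by the definition of $\mathbb{D}$-perfect specialized to $\mathbb{D}=\mathbb{F}$ (equivalently, the corresponding entry of the terminology table); and every convergence approach space is $\mathbb{F}$-based, because for $\F\in\mathbb{F}X$ one has $(\Base_{\mathbb{F}}\lambda)(\F)(\cdot)=\bigwedge_{\mathbb{F}\ni\G\leq\F}\lambda(\G)(\cdot)$, and since $\F\geq\G$ forces $\lambda(\G)\geq\lambda(\F)$ by~(\ref{eq:CAL2}), this infimum is attained at $\G=\F$ and equals $\lambda(\F)$. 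In particular $\Base_{\mathbb{F}}$ is the identity functor, so ``$\mathbb{F}$-based convergence approach space'' just means ``convergence approach space'' and ``atomic $\mathbb{F}$-based topological approach space'' just means ``atomic topological approach space''.

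With these identifications the three conditions of the preceding corollary, instantiated at $\mathbb{D}=\mathbb{F}$, are verbatim conditions~(1), (2), (3) of the present statement, so the equivalence follows directly and no further argument is required. There is no real obstacle here; the only step deserving a line of proof is the remark that $\Base_{\mathbb{F}}$ is the identity, which, as noted, is an immediate consequence of the monotonicity axiom~(\ref{eq:CAL2}).
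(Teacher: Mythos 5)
Your proposal is correct and coincides with the paper's own treatment: the paper gives no separate proof of this corollary, presenting it precisely as the instance $\mathbb{D}=\mathbb{F}$ of the preceding general corollary. The identifications you verify (composability of $\mathbb{F}$ and $\mathbb{F}_{0}\subseteq\mathbb{F}$, $\mathbb{F}$-compact $=$ compact, $\mathbb{F}$-perfect $=$ perfect, and $\Base_{\mathbb{F}}\lambda=\lambda$ via (CAL2), so that the $\mathbb{F}$-based hypotheses are vacuous) are exactly what is needed to read the statement off that corollary.
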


\begin{cor}
Let $(X,\lambda_{X})$ be a convergence approach space. The following
are equivalent:
\begin{enumerate}
\item $X$ is countably compact; 
\item For every first-countable (%
\footnote{As noted in \cite{myn.applofcompact} in the context of $\mathbf{Conv}$,
we could extend this to spaces based in contours of countably based
filters, but this is of little importance here. In the case of topological
spaces, they turn out to be exactly subsequential spaces, that is,
subspaces of sequential spaces. See \cite{myn.applofcompact} for
details.%
}) convergence approach space $(Y,\lambda_{Y}),p_{Y}:X\times Y\to Y$
is countably perfect; 
\item For every first-countable atomic topological (approach) space $Y,\, p_{Y}:X\times Y\to Y$
is closed.
\end{enumerate}
\end{cor}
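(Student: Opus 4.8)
The plan is to read off this statement as the instance $\mathbb{D}=\mathbb{F}_1$ of the general Kuratowski--Mr\`owka Corollary above (the one valid for an arbitrary composable class $\mathbb{D}$ of filters containing the principal filters). First I would verify that $\mathbb{F}_1$, the class of countably based filters, does satisfy those hypotheses: it is composable and contains $\mathbb{F}_0$, as recorded in Section 1. Hence that Corollary applies verbatim with $\mathbb{D}=\mathbb{F}_1$, and all that remains is to translate its three conditions into the present vocabulary.

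The dictionary is as follows. ``$X$ is $\mathbb{F}_1$-compact'' means $c_{\mathbb{F}_1}^{X}X=0$, which by \cite[Examples 4--8]{mynardmeasureCAP} (see also the table above) is precisely the vanishing of the measure of countable compactness of $X$ in the sense of \cite{baekelandlowen}, i.e.\ condition (1). An $\mathbb{F}_1$-based convergence approach space is what we have just agreed to call a first-countable convergence approach space, and an $\mathbb{F}_1$-perfect map is by definition a countably perfect map, so condition (2) of the general Corollary becomes condition (2) here. Likewise an $\mathbb{F}_1$-based atomic topological approach space is a first-countable atomic topological approach space, and closedness is unchanged, so condition (3) of the general Corollary becomes condition (3) here.

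With this substitution the three conditions of the general Corollary are verbatim the three conditions of the present statement, so the equivalences follow immediately. I do not expect any genuine obstacle: the only point worth a remark is the one recorded in the footnote to (2), namely that the machinery of Theorem \ref{thm:mainproduct} only probes $X\times Y$ against atomic topological approach spaces whose unique non-isolated point carries a countably based neighborhood filter, so that first-countability of $Y$ is exactly the right hypothesis and no refinement of it (e.g.\ to spaces based in contours of countably based filters) is needed for the equivalence.
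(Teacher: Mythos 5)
Your proposal is correct and is exactly the paper's intended argument: the corollary is stated as the instance $\mathbb{D}=\mathbb{F}_{1}$ of the general Kuratowski--Mr\`owka corollary, using that $\mathbb{F}_{1}$ is composable and contains the principal filters, together with the dictionary ($\mathbb{F}_{1}$-compact $=$ countably compact, $\mathbb{F}_{1}$-based $=$ first-countable, $\mathbb{F}_{1}$-perfect $=$ countably perfect) recorded in the table and the paragraph preceding these corollaries. Nothing further is needed.
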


\begin{cor}
Let $(X,\lambda_{X})$ be a convergence approach space. The following
are equivalent:
\begin{enumerate}
\item $X$ is Lindel\"of; 
\item For every $P$-convergence approach space $(Y,\lambda_{Y}),p_{Y}:X\times Y\to Y$
is inversely Lindel\"of; 
\item For every atomic topological $P$-space $Y$ (seen as an approach
space) $p_{Y}:X\times Y\to Y$ is closed.
\end{enumerate}
\end{cor}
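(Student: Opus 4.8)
The plan is to obtain this corollary as nothing more than the instance $\mathbb{D}=\mathbb{F}_{\wedge1}$ of the generalized Kuratowski--Mr\`owka characterization established just above, exactly as the preceding two corollaries (for compactness and for countable compactness) were obtained for $\mathbb{D}=\mathbb{F}$ and $\mathbb{D}=\mathbb{F}_{1}$. The first step is simply to check that the hypotheses of the general corollary hold for this choice of $\mathbb{D}$: as recalled earlier, the class $\mathbb{F}_{\wedge1}$ of countably deep filters is composable and contains the principal filters, so the general corollary applies verbatim with $\mathbb{D}=\mathbb{F}_{\wedge1}$.

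The remaining work is purely terminological. Consulting the table and the definitions introduced after it, an $\mathbb{F}_{\wedge1}$-compact space is by definition a Lindel\"of space, an $\mathbb{F}_{\wedge1}$-based convergence approach space is by definition a $P$-convergence approach space, and an $\mathbb{F}_{\wedge1}$-perfect map is by definition an inversely Lindel\"of map. For condition~(3) I would note that an atomic topological approach space is $\mathbb{F}_{\wedge1}$-based precisely when the neighborhood filter of its unique non-isolated point is stable under countable intersections, that is, precisely when it is an atomic topological $P$-space; thus ``atomic $\mathbb{F}_{\wedge1}$-based topological approach space'' and ``atomic topological $P$-space'' name the same class of objects. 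Substituting these four identifications into conditions (1)--(3) of the general Kuratowski--Mr\`owka corollary yields conditions (1)--(3) of the present statement word for word, and the equivalences transfer.

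Because every step is either a direct invocation of an equivalence already proved or the unwinding of a definition, there is no real obstacle to overcome here; the only point meriting a moment's care is the last identification, namely that an atomic topological $P$-space is exactly an atomic $\mathbb{F}_{\wedge1}$-based topological approach space, which is immediate once one recalls that a topological space is a $P$-space precisely when each point has a neighborhood filter closed under countable intersections.
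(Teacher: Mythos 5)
Your proposal is correct and matches the paper's own treatment: the paper obtains this corollary with no separate argument, precisely as the instance $\mathbb{D}=\mathbb{F}_{\wedge1}$ of the general Kuratowski--Mr\`owka corollary, using the fact that $\mathbb{F}_{\wedge1}$ is composable and contains $\mathbb{F}_{0}$, together with the terminological identifications ($\mathbb{F}_{\wedge1}$-compact $=$ Lindel\"of, $\mathbb{F}_{\wedge1}$-perfect $=$ inversely Lindel\"of, $\mathbb{F}_{\wedge1}$-based $=$ $P$-convergence approach space, and atomic topological $P$-space $=$ atomic $\mathbb{F}_{\wedge1}$-based topological approach space). Your care about the last identification is exactly the right point to check, and it holds for the reason you give.
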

On the other hand, Theorem \ref{thm:mainformaps} combined with Proposition
\ref{prop:closedF0compact} readily gives:
\begin{cor}
\label{cor:Dperfectprod}Let $\mathbb{D}$ be a composable class of
filters containing principal filters. Let $(X,\lambda_{X})$ and $(Y,\lambda_{Y})$
be two convergence approach spaces, and let $f:X\to Y$. Then the
following are equivalent: 
\begin{enumerate}
\item $f$ is $\mathbb{D}$-perfect; 
\item For every $\mathbb{D}$-based convergence approach space $Z$, $f\times Id_{Z}:X\times Z\to Y\times Z$
is $\mathbb{D}$-perfect; 
\item For every atomic topological $\mathbb{D}$-based approach space $Z,$
$f\times Id_{Z}:X\times Z\to Y\times Z$ is closed. 
\end{enumerate}
\end{cor}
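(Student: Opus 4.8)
The plan is to deduce everything from Theorem \ref{thm:mainformaps} applied to the relation $R=f^{-}:Y\rightrightarrows X$, together with Proposition \ref{prop:closedF0compact}. First I would record the one preliminary observation that makes this work: for any set $Z$, the inverse of the product map $f\times Id_Z:X\times Z\to Y\times Z$ coincides with the product relation $f^{-}\times Id_Z:Y\times Z\rightrightarrows X\times Z$, since both send a point $(y,z)$ to $f^{-}(y)\times\{z\}$. Thus $(f\times Id_Z)^{-}=f^{-}\times Id_Z$ as relations from $Y\times Z$ to $X\times Z$. Moreover, since $\mathbf{Cap}$ is a topological category, $X\times Z$ and $Y\times Z$ carry convergence approach structures and $f\times Id_Z$ is a morphism between them, so Proposition \ref{prop:closedF0compact} is applicable to it.

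With this in hand, here is how the argument would go. By definition, $f$ is $\mathbb{D}$-perfect exactly when $f^{-}:Y\rightrightarrows X$ is a $\mathbb{D}$-compact relation. Applying Theorem \ref{thm:mainformaps} with $R=f^{-}$ (the theorem is symmetric in the naming of the two spaces), this holds if and only if for every $\mathbb{D}$-based convergence approach space $Z$ the relation $f^{-}\times Id_Z$ is $\mathbb{D}$-compact; by the identification above this relation is $(f\times Id_Z)^{-}$, so the condition says precisely that $f\times Id_Z$ is $\mathbb{D}$-perfect. This yields the equivalence $(1)\Leftrightarrow(2)$.

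For $(1)\Leftrightarrow(3)$ I would again invoke Theorem \ref{thm:mainformaps}: $f^{-}$ is $\mathbb{D}$-compact if and only if for every atomic topological $\mathbb{D}$-based approach space $Z$ the relation $f^{-}\times Id_Z=(f\times Id_Z)^{-}$ is $\mathbb{F}_{0}$-compact. By Proposition \ref{prop:closedF0compact}, an inverse relation being $\mathbb{F}_{0}$-compact is exactly the assertion that the map $f\times Id_Z$ is closed, so condition $(3)$ is equivalent to $(1)$, and the proof is complete.

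I do not expect any real obstacle here: the substance is already packaged in Theorem \ref{thm:mainformaps} and Proposition \ref{prop:closedF0compact}, and the only genuinely new ingredient is the routine verification that passing to inverse relations commutes with forming products with an identity, which is what lets the ``$\mathbb{D}$-compact relation'' clauses of Theorem \ref{thm:mainformaps} be rephrased as the ``$\mathbb{D}$-perfect map'' and ``closed map'' clauses in the statement.
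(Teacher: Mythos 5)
Your proposal is correct and is exactly the argument the paper intends: the paper derives Corollary \ref{cor:Dperfectprod} by combining Theorem \ref{thm:mainformaps} (applied to the relation $f^{-}$) with Proposition \ref{prop:closedF0compact}, which is precisely what you do. Your explicit verification that $(f\times Id_{Z})^{-}=f^{-}\times Id_{Z}$ just spells out the routine identification the paper leaves implicit.
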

Similarly, Theorem \ref{thm:mainformaps} combines with Theorem \ref{th:Dquotient}
to the effect that:

\begin{cor}
\label{cor:Dquotientprod}Let $\mathbb{D}$ be a composable class
of filters containing principal filters. Let $(X,\lambda_{X})$ and
$(Y,\lambda_{Y})$ be two convergence approach spaces, and let $f:X\to Y$
be a surjective map. Then the following are equivalent: 
\begin{enumerate}
\item $f$ is $\mathbb{D}$-quotient; 
\item For every $\mathbb{D}$-based convergence approach space $Z$, $f\times Id_{Z}:X\times Z\to Y\times Z$
is $\mathbb{D}$-quotient; 
\item For every atomic topological $\mathbb{D}$-based approach space $Z,$
$f\times Id_{Z}:X\times Z\to Y\times Z$ is hereditarily quotient. 
\end{enumerate}
\end{cor}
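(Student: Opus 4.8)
The plan is to deduce this from Theorem \ref{thm:mainformaps} by exactly the device already used in Theorem \ref{th:Dquotient}: replace the map $f$ by the relation it induces from the initial $\mathbf{Cap}$-structure $\lambda_{f^{-}Y}$ on $X$ (formula (\ref{eq:initial})) to the final, i.e.\ quotient, $\mathbf{Cap}$-structure $\lambda_{fX}$ on $Y$ (formula (\ref{eq:final})). First I would record that, by Theorem \ref{th:Dquotient}, $f$ is $\mathbb{D}$-quotient exactly when $R:=f\colon(X,\lambda_{f^{-}Y})\rightrightarrows(Y,\lambda_{fX})$ is a $\mathbb{D}$-compact relation. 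Then I would apply Theorem \ref{thm:mainformaps} to $R$ (between the convergence approach spaces $(X,\lambda_{f^{-}Y})$ and $(Y,\lambda_{fX})$): it asserts that $R$ is $\mathbb{D}$-compact if and only if $f\times Id_{Z}\colon(X,\lambda_{f^{-}Y})\times Z\rightrightarrows(Y,\lambda_{fX})\times Z$ is $\mathbb{D}$-compact for every $\mathbb{D}$-based convergence approach space $Z$, and if and only if that same relation is $\mathbb{F}_{0}$-compact for every atomic topological $\mathbb{D}$-based approach space $Z$.

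The heart of the argument is then to recognize, for an arbitrary convergence approach space $Z$, the product structure $\lambda_{f^{-}Y}\times\lambda_{Z}$ on $X\times Z$ as the initial structure induced by $f\times Id_{Z}\colon X\times Z\to(Y\times Z,\lambda_{Y}\times\lambda_{Z})$, and the product structure $\lambda_{fX}\times\lambda_{Z}$ on $Y\times Z$ as the final (quotient) structure induced by $f\times Id_{Z}\colon(X\times Z,\lambda_{X}\times\lambda_{Z})\to Y\times Z$. The first recognition is formal: a product is an initial source, $\lambda_{f^{-}Y}$ is initial for $f$, and initial structures compose, so that both $\lambda_{f^{-}Y}\times\lambda_{Z}$ and the initial structure for $f\times Id_{Z}$ into $(Y\times Z,\lambda_{Y}\times\lambda_{Z})$ coincide with the initial structure on $X\times Z$ for the source $\{f\circ p_{X},\,p_{Z}\}$. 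The second is where the only non-routine input enters: since $\mathbf{Cap}$ is cartesian closed, the functor $-\times Z$ is a left adjoint, hence preserves regular epimorphisms, i.e.\ quotient maps; applying it to the quotient map $f\colon(X,\lambda_{X})\to(Y,\lambda_{fX})$ shows that $f\times Id_{Z}$ is a quotient map onto $(Y,\lambda_{fX})\times Z$, which forces $\lambda_{fX}\times\lambda_{Z}$ to be the final $\mathbf{Cap}$-structure induced by $f\times Id_{Z}$ from $(X\times Z,\lambda_{X}\times\lambda_{Z})$. I expect this preservation-of-quotients step, i.e.\ the cartesian closedness of $\mathbf{Cap}$, to be the main obstacle; everything else is bookkeeping with initial and final structures and with the cited theorems.

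Finally I would close the loop by invoking Theorem \ref{th:Dquotient} once more, this time applied to the surjection $f\times Id_{Z}$ (onto because $f$ is): with the two structural identifications in hand, ``$f\times Id_{Z}\colon(X,\lambda_{f^{-}Y})\times Z\rightrightarrows(Y,\lambda_{fX})\times Z$ is a $\mathbb{D}$-compact relation'' translates into ``$f\times Id_{Z}\colon X\times Z\to Y\times Z$ is $\mathbb{D}$-quotient'', which gives $(1)\Leftrightarrow(2)$; and, specializing to $\mathbb{D}=\mathbb{F}_{0}$ there (so that $\mathbb{F}_{0}$-quotient reads ``hereditarily quotient''), ``$f\times Id_{Z}$ between the initial and final structures is $\mathbb{F}_{0}$-compact'' translates into ``$f\times Id_{Z}$ is hereditarily quotient'', which, matched term-by-term against the third clause of Theorem \ref{thm:mainformaps}, yields the equivalence with $(3)$.
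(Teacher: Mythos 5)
Your proof is correct and takes essentially the same route as the paper, whose (unwritten) argument is precisely the combination of Theorem \ref{th:Dquotient} with Theorem \ref{thm:mainformaps} applied to the relation $f:(X,\lambda_{f^{-}Y})\rightrightarrows(Y,\lambda_{fX})$. You merely make explicit the two structural identifications the paper leaves implicit, namely $\lambda_{(f\times Id_{Z})^{-}(Y\times Z)}=\lambda_{f^{-}Y}\times\lambda_{Z}$ and $\lambda_{(f\times Id_{Z})(X\times Z)}=\lambda_{fX}\times\lambda_{Z}$, the latter justified correctly by the cartesian closedness of $\mathbf{Cap}$ (so that $-\times Z$ preserves quotients).
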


In particular, when $\mathbb{D=F}$ is the class of all filters, we
obtain:

\begin{cor}
\label{cor:biquotient} Let $(X,\lambda_{X})$ and $(Y,\lambda_{Y})$
be two convergence approach spaces, and let $f:X\to Y$ be a surjective
map. Then the following are equivalent: 
\begin{enumerate}
\item $f$ is biquotient; 
\item For every convergence approach space $Z$, $f\times Id_{Z}:X\times Z\to Y\times Z$
is biquotient; 
\item For every atomic topological approach space $Z,$ $f\times Id_{Z}:X\times Z\to Y\times Z$
is hereditarily quotient. 
\end{enumerate}
\end{cor}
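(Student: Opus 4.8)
The plan is to recognize that Corollary~\ref{cor:biquotient} is simply the instance of Corollary~\ref{cor:Dquotientprod} obtained by taking $\mathbb{D}=\mathbb{F}$, the class of \emph{all} filters, so the proof consists entirely in verifying that this specialization is legitimate and that the three conditions of Corollary~\ref{cor:Dquotientprod} reduce, for $\mathbb{D}=\mathbb{F}$, to the three conditions stated here. First I would check the hypotheses of Corollary~\ref{cor:Dquotientprod}: the class $\mathbb{F}$ of all filters contains the principal filters by definition, and it is composable because for any relation $\J\in\filter(X\times Y)$ and any $\F\in\filter X$ the object $\J[\F]$ is (by construction, as recalled in the subsection on calculus of relations) a filter on $Y$, hence lies in $\mathbb{F}Y$; in particular $\mathbb{F}$ is $\mathbb{F}_0$-composable as well. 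So Corollary~\ref{cor:Dquotientprod} applies with $\mathbb{D}=\mathbb{F}$.

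Next I would translate the terminology. By the definition given immediately after~\eqref{eq:Dquotient}, a surjective $\mathbb{F}$-quotient map is exactly a biquotient map, so condition (1) of Corollary~\ref{cor:Dquotientprod} becomes ``$f$ is biquotient,'' and likewise ``$f\times Id_Z$ is $\mathbb{F}$-quotient'' in condition (2) becomes ``$f\times Id_Z$ is biquotient.'' It remains to see that the qualifier ``$\mathbb{D}$-based'' on the test space $Z$ is vacuous when $\mathbb{D}=\mathbb{F}$. Indeed, from~\eqref{eq:BaseJ} we have
\[
(\mathrm{Base}_{\mathbb{F}}\lambda)(\mathcal{G})(\cdot)=\bigwedge_{\mathbb{F}\ni\mathcal{H}\leq\mathcal{G}}\lambda(\mathcal{H})(\cdot)=\lambda(\mathcal{G})(\cdot),
\]
the last equality because $\mathcal{G}$ itself is a filter with $\mathcal{G}\leq\mathcal{G}$, so the infimum is attained at $\mathcal{H}=\mathcal{G}$; thus $\Base_{\mathbb{F}}$ is the identity on $\mathbf{Cap}$-objects (matching the remark in the introduction that $\Base_{\mathbb{F}}$ is the identity functor on $\mathbf{Conv}$), and \emph{every} convergence approach space is $\mathbb{F}$-based. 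Consequently condition (2) of Corollary~\ref{cor:Dquotientprod} reads ``for every convergence approach space $Z$, $f\times Id_Z$ is biquotient,'' and in condition (3) ``atomic topological $\mathbb{F}$-based approach space'' is just ``atomic topological approach space,'' while ``$f\times Id_Z$ is hereditarily quotient'' is unchanged. This yields exactly the three equivalent conditions of Corollary~\ref{cor:biquotient}.

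There is no real obstacle here: the only points requiring a word of justification are the composability of $\mathbb{F}$ and the triviality of $\Base_{\mathbb{F}}$, both of which are immediate from the definitions recalled in the excerpt. Accordingly I would phrase the proof as a one-paragraph deduction: ``Apply Corollary~\ref{cor:Dquotientprod} with $\mathbb{D}=\mathbb{F}$, noting that $\mathbb{F}$ is composable and contains the principal filters, that $\mathbb{F}$-quotient means biquotient, and that $\Base_{\mathbb{F}}$ is the identity so that every convergence approach space is $\mathbb{F}$-based.''
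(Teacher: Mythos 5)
Your proposal is correct and matches the paper exactly: the paper states Corollary~\ref{cor:biquotient} as the instance $\mathbb{D}=\mathbb{F}$ of Corollary~\ref{cor:Dquotientprod}, which is precisely your argument. Your explicit checks (that $\mathbb{F}$ is composable and contains principal filters, that $\mathbb{F}$-quotient means biquotient, and that $\Base_{\mathbb{F}}$ is the identity so ``$\mathbb{F}$-based'' is vacuous) are exactly the routine verifications the paper leaves implicit.
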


\begin{cor}
\label{cor:perfectproper} Let $(X,\lambda_{X})$ and $(Y,\lambda_{Y})$
be two convergence approach spaces, and let $f:X\to Y$. Then the
following are equivalent: 
\begin{enumerate}
\item $f$ is perfect; 
\item For every convergence approach space $Z$, $f\times Id_{Z}:X\times Z\to Y\times Z$
is perfect; 
\item For every atomic topological approach space $Z,$ $f\times Id_{Z}:X\times Z\to Y\times Z$
is closed. 
\end{enumerate}
\end{cor}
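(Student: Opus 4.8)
The plan is to read this off as the special case $\mathbb{D}=\mathbb{F}$ of Corollary \ref{cor:Dperfectprod}; essentially nothing new needs to be proved, and the work is confined to unwinding the definitions for this particular class of filters.

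First I would check that $\mathbb{F}$, the class of all filters, is a composable class containing the principal filters, as Corollary \ref{cor:Dperfectprod} requires. Containment of $\mathbb{F}_{0}$ is immediate, and composability is trivial: for any $\mathcal{J}\in\mathbb{F}(X\times Y)$ and any $\mathcal{D}\in\mathbb{F}X$ the family $\mathcal{J}[\mathcal{D}]$ is by construction a (possibly degenerate) filter on $Y$, hence belongs to $\mathbb{F}Y$ by the standing convention that every class of filters contains the degenerate filter of each set. Thus Corollary \ref{cor:Dperfectprod} is applicable with $\mathbb{D}=\mathbb{F}$.

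It then remains to translate the three conditions of Corollary \ref{cor:Dperfectprod} for $\mathbb{D}=\mathbb{F}$. Since $\mathcal{F}\leq\mathcal{F}$ for every filter, formula (\ref{eq:BaseJ}) gives $\Base_{\mathbb{F}}\lambda=\lambda$, so \emph{every} convergence approach space is $\mathbb{F}$-based; the qualifier ``$\mathbb{F}$-based'' is therefore vacuous, and quantifying over $\mathbb{F}$-based spaces $Z$ in conditions (2) and (3) of Corollary \ref{cor:Dperfectprod} amounts to quantifying over all convergence approach spaces, respectively over all atomic topological approach spaces. By definition an $\mathbb{F}$-perfect map is a perfect map, and by Proposition \ref{prop:closedF0compact} an $\mathbb{F}_{0}$-perfect map is precisely a closed map (which is already the wording used in condition (3) of Corollary \ref{cor:Dperfectprod}). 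With these identifications Corollary \ref{cor:Dperfectprod} becomes exactly the equivalence stated here, so I anticipate no genuine obstacle; the only point worth a line of comment is the vacuity of the $\mathbb{F}$-based restriction, which is what lets the universal quantifiers in (2) and (3) range over all spaces of the indicated kind.
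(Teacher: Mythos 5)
Your proposal is correct and matches the paper's treatment exactly: the paper obtains Corollary \ref{cor:perfectproper} simply by specializing Corollary \ref{cor:Dperfectprod} to $\mathbb{D}=\mathbb{F}$, which is composable and contains the principal filters, with the ``$\mathbb{F}$-based'' hypothesis vacuous since $\Base_{\mathbb{F}}\lambda=\lambda$ for every convergence approach structure. Your explicit verification of these points is a harmless elaboration of what the paper leaves implicit.
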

In \cite{lowen.Mrowka}, Lowen and al. call a map $f:X\to Y$ between
two \emph{approach }spaces \emph{proper }if $f\times Id_{Z}$ is closed
for every approach space $Z$. In view of Corollary \ref{cor:perfectproper},
our perfect maps extend to $\mathbf{Cap}$ the concept of proper maps
of \cite{lowen.Mrowka}. Additionally, the equivalence between (1)
and (2) in \cite[Proposition 3.3]{lowen.Mrowka} states that a map
between two approach spaces is proper if and only if it is closed
and has compact fibers ($0$-compact in the terminology of \cite{lowen.Mrowka}).
Theorem \ref{thm:Dperfect} for $\mathbb{D=F}$ and Corollary \ref{cor:perfectproper}
recover this equivalence, and delineate the conditions of an extension
of this result to $\mathbf{Cap}$ (namely, $X$ needs to remain an
approach space, but $Y$ can be an arbitrary convergence approach
space). At any rate, the proper (no punn intended) notion yielding
a characterization of maps whose product with every identity map is
closed (in $\mathbf{Cap}$ and not only $\mathbf{Ap}$) appears to
be that of perfect maps, which ultimately depends on that of compact
relation. That the condition reduces to the closedness of the map
and compactness of the fibers is specific to $\mathbf{Ap}$, as shows
Theorem \ref{thm:Dperfect}. 

Maybe more importantly, the viewpoint in terms of $\mathbb{D}$-compact
relations unveils the relationships between similar characterizations
in terms of products of variants of perfects maps on one hand (Corollary
\ref{cor:Dperfectprod}) and variants of quotient maps on the other
hand (Corollary \ref{cor:Dquotientprod}) as two instances of the
same result. While this was already observed in \cite{myn.applofcompact}
in the concept of $\mathbf{Conv}$, it is remarkable that this turns
out to extend fully to $\mathbf{Cap}$.

On the other hand, letting $\mathbb{D}$ range over other classes
($\mathbb{F}_{1}$, $\mathbb{F}_{\wedge1}$, $\mathbb{F}_{0}$) yields
other variants of Corollaries \ref{cor:biquotient} and \ref{cor:perfectproper}:

\begin{cor}
\label{cor:countbiquotient} Let $(X,\lambda_{X})$ and $(Y,\lambda_{Y})$
be two convergence approach spaces, and let $f:X\to Y$ be a surjective
map. Then the following are equivalent: 
\begin{enumerate}
\item $f$ is countably biquotient; 
\item For every first-countable convergence approach space $Z$, $f\times Id_{Z}:X\times Z\to Y\times Z$
is countably biquotient; 
\item For every atomic first-countable topological approach space $Z,$
$f\times Id_{Z}:X\times Z\to Y\times Z$ is hereditarily quotient. 
\end{enumerate}
\end{cor}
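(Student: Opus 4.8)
The plan is simply to recognize this statement as the specialization of Corollary~\ref{cor:Dquotientprod} to the class $\mathbb{D}=\mathbb{F}_1$ of countably based filters, after unwinding the terminology. First I would check that the hypotheses of Corollary~\ref{cor:Dquotientprod} are met: the class $\mathbb{F}_1$ was noted, right after the definition of composability, to be a composable class containing the principal filters $\mathbb{F}_0$, so it is admissible there.

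Next I would invoke the dictionary already set up. By the reading of (\ref{eq:Dquotient}) recorded just before the table (and in the table itself), an $\mathbb{F}_1$-quotient map is by definition a countably biquotient map, and an $\mathbb{F}_0$-quotient map is a hereditarily quotient map; and by the definition given after the Kuratowski--Mr\`owka-type corollary, a first-countable convergence approach space is precisely an $\mathbb{F}_1$-based one. Substituting $\mathbb{D}=\mathbb{F}_1$ into the three clauses of Corollary~\ref{cor:Dquotientprod} then produces, word for word: (1) $f$ is countably biquotient; (2) for every first-countable (i.e.\ $\mathbb{F}_1$-based) convergence approach space $Z$, $f\times Id_Z$ is countably biquotient; (3) for every atomic first-countable (i.e.\ $\mathbb{F}_1$-based) topological approach space $Z$, $f\times Id_Z$ is hereditarily quotient. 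These are exactly the three clauses of the present corollary, so the equivalence is immediate from that of Corollary~\ref{cor:Dquotientprod}.

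Since this is purely a matter of translation once Corollary~\ref{cor:Dquotientprod} is in hand, I do not expect any real obstacle. The only point worth a remark is that ``first-countable'' for an atomic topological approach space is to be read, as the definition dictates, as ``$\mathbb{F}_1$-based'', i.e.\ the neighborhood filter at the unique non-isolated point is countably based; with that understood, clause~(3) matches clause~(3) of Corollary~\ref{cor:Dquotientprod} verbatim and no further argument is needed.
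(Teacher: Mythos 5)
Your proposal is correct and matches the paper exactly: the corollary is stated there as the instance $\mathbb{D}=\mathbb{F}_{1}$ of Corollary~\ref{cor:Dquotientprod}, with precisely the terminological dictionary you invoke ($\mathbb{F}_{1}$-quotient $=$ countably biquotient, $\mathbb{F}_{0}$-quotient $=$ hereditarily quotient, $\mathbb{F}_{1}$-based $=$ first-countable, and $\mathbb{F}_{1}$ composable and containing $\mathbb{F}_{0}$). Nothing further is needed.
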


\begin{cor}
Let $(X,\lambda_{X})$ and $(Y,\lambda_{Y})$ be two convergence approach
spaces, and let $f:X\to Y$. Then the following are equivalent: 
\begin{enumerate}
\item $f$ is countably perfect; 
\item For every first-countable convergence approach space $Z$, $f\times Id_{Z}:X\times Z\to Y\times Z$
is countably perfect; 
\item For every atomic first-countable topological approach space $Z,$
$f\times Id_{Z}:X\times Z\to Y\times Z$ is closed. 
\end{enumerate}
\end{cor}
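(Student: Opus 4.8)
The plan is to obtain this corollary as the special case $\mathbb{D}=\mathbb{F}_1$ of Corollary \ref{cor:Dperfectprod}, so essentially nothing new needs to be proved: one only has to unwind the dictionary between the abstract terminology and the concrete one. First I would invoke Corollary \ref{cor:Dperfectprod} with $\mathbb{D}=\mathbb{F}_1$, the class of countably based filters. This class is composable and contains the principal filters, as recorded in the discussion of composability in the introduction, so the hypotheses of Corollary \ref{cor:Dperfectprod} are met. It therefore yields the equivalence of the three statements obtained from Corollary \ref{cor:Dperfectprod} by substituting $\mathbb{D}=\mathbb{F}_1$.

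Next I would translate each clause. By the table of terminology, $\mathbb{F}_1$-perfect means countably perfect, which gives clause (1) and the ``countably perfect'' occurrences in clauses (2) and (3). An $\mathbb{F}_1$-based convergence approach space is, by the definition introduced just before the previous batch of corollaries, exactly what we call a first-countable convergence approach space; this converts ``$\mathbb{D}$-based convergence approach space $Z$'' into ``first-countable convergence approach space $Z$'' in clause (2). Finally, an atomic topological $\mathbb{F}_1$-based approach space is precisely an atomic first-countable topological approach space, which is the phrasing in clause (3); and in Corollary \ref{cor:Dperfectprod} clause (3) the conclusion for such $Z$ is that $f\times Id_Z$ is closed, matching clause (3) here. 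Thus the three statements of the present corollary coincide verbatim with the three statements of Corollary \ref{cor:Dperfectprod} specialized to $\mathbb{D}=\mathbb{F}_1$, and their equivalence is immediate.

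There is essentially no obstacle: the only thing to be careful about is the verification that $\mathbb{F}_1$ is a composable class containing $\mathbb{F}_0$, but this is stated explicitly in the paragraph on $\mathbb{J}$-composable classes (``the classes $\mathbb{F}_0$ of principal filters, $\mathbb{F}_1$ of countably based filters, and $\mathbb{F}_{\wedge1}$ of countably deep filters are all composable classes containing $\mathbb{F}_0$''). Hence the proof is a single sentence: apply Corollary \ref{cor:Dperfectprod} with $\mathbb{D}=\mathbb{F}_1$ and read off the terminology from the table together with the definition of first-countable convergence approach space.

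\begin{proof}
Apply Corollary \ref{cor:Dperfectprod} with $\mathbb{D}=\mathbb{F}_1$, the composable class of countably based filters, which contains the principal filters. By the terminology recorded in the table, an $\mathbb{F}_1$-perfect map is a countably perfect map, and, by definition, an $\mathbb{F}_1$-based convergence approach space is a first-countable one. Substituting these into the three conditions of Corollary \ref{cor:Dperfectprod} gives exactly the three conditions above.
\end{proof}
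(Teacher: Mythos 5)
Your proposal is correct and coincides with the paper's own (implicit) argument: the paper presents this corollary precisely as the instance $\mathbb{D}=\mathbb{F}_{1}$ of Corollary \ref{cor:Dperfectprod}, using that $\mathbb{F}_{1}$ is composable and contains $\mathbb{F}_{0}$, that $\mathbb{F}_{1}$-perfect means countably perfect, and that $\mathbb{F}_{1}$-based means first-countable. Nothing further is needed.
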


\begin{cor}
\label{cor:weaklybiq} Let $(X,\lambda_{X})$ and $(Y,\lambda_{Y})$
be two convergence approach spaces, and let $f:X\to Y$ be a surjective
map. Then the following are equivalent: 
\begin{enumerate}
\item $f$ is weakly biquotient; 
\item For every $P$-convergence approach space $Z$, $f\times Id_{Z}:X\times Z\to Y\times Z$
is weakly biquotient; 
\item For every atomic topological approach $P$-space $Z,$ $f\times Id_{Z}:X\times Z\to Y\times Z$
is hereditarily quotient. 
\end{enumerate}
\end{cor}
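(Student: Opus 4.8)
The plan is to read this off as the special case $\mathbb{D}=\mathbb{F}_{\wedge 1}$ of Corollary~\ref{cor:Dquotientprod}, where $\mathbb{F}_{\wedge 1}$ is the class of countably deep filters. First I would check that $\mathbb{F}_{\wedge 1}$ satisfies the standing hypotheses of that corollary: as recalled in Section~1, $\mathbb{F}_{\wedge 1}$ is a composable class of filters and it contains the class $\mathbb{F}_{0}$ of principal filters (so that it is in particular $\mathbb{F}_{0}$-composable, which is what feeds Theorem~\ref{th:Dquotient}, while composability feeds Theorem~\ref{thm:mainformaps}). With these hypotheses in hand, Corollary~\ref{cor:Dquotientprod} applies verbatim and yields the equivalence of: (a) $f$ is $\mathbb{F}_{\wedge 1}$-quotient; (b) $f\times Id_{Z}$ is $\mathbb{F}_{\wedge 1}$-quotient for every $\mathbb{F}_{\wedge 1}$-based convergence approach space $Z$; and (c) $f\times Id_{Z}$ is hereditarily quotient for every atomic topological $\mathbb{F}_{\wedge 1}$-based approach space $Z$.

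It then remains to rewrite (a)--(c) in the present terminology. By the $\mathbb{D}=\mathbb{F}_{\wedge 1}$ row of the terminology table following Theorem~\ref{th:Dquotient}, an $\mathbb{F}_{\wedge 1}$-quotient map is by definition a weakly biquotient map, so (a) is (1) and (b) is (2). For (c), one uses the convention fixed in Section~4 that an $\mathbb{F}_{\wedge 1}$-based convergence approach space is precisely a $P$-convergence approach space; the only point needing a word of justification is that, among \emph{atomic topological} approach spaces, being $\mathbb{F}_{\wedge 1}$-based is the same as being a $P$-space. This is immediate: an atomic space has a single non-isolated point $\infty$, every other point being isolated (hence with countably deep neighborhood filter $\{x\}^{\uparrow}$), so such a space is $\mathbb{F}_{\wedge 1}$-based exactly when $\N(\infty)$ is countably deep, which is the defining property of a topological $P$-space. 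With this identification, (c) becomes (3), completing the translation.

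I do not expect any genuine obstacle here: all the mathematical content sits in Corollary~\ref{cor:Dquotientprod} (ultimately in Theorems~\ref{thm:mainproduct}, \ref{thm:mainformaps} and~\ref{th:Dquotient}), and the remaining work is purely a matter of unwinding definitions. The one place to be careful is not to forget to verify, for $\mathbb{D}=\mathbb{F}_{\wedge 1}$, the two distinct hypotheses used upstream --- composability and containment of $\mathbb{F}_{0}$ --- both of which are recorded in Section~1; everything else is bookkeeping.
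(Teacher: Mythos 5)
Your proposal is correct and coincides with the paper's own (implicit) argument: the corollary is stated precisely as the instance $\mathbb{D}=\mathbb{F}_{\wedge 1}$ of Corollary~\ref{cor:Dquotientprod}, using the facts recorded in Section~1 that $\mathbb{F}_{\wedge 1}$ is composable and contains $\mathbb{F}_{0}$, together with the terminology table ($\mathbb{F}_{\wedge 1}$-quotient $=$ weakly biquotient) and the convention that $\mathbb{F}_{\wedge 1}$-based means $P$-convergence approach space. Your extra remark identifying atomic topological $\mathbb{F}_{\wedge 1}$-based spaces with atomic topological $P$-spaces is a correct and harmless unwinding of definitions.
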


\begin{cor}
Let $(X,\lambda_{X})$ and $(Y,\lambda_{Y})$ be two convergence approach
spaces, and let $f:X\to Y$. Then the following are equivalent: 
\begin{enumerate}
\item $f$ is inversely Lindel\"of; 
\item For every $P$-convergence approach space $Z$, $f\times Id_{Z}:X\times Z\to Y\times Z$
is inversely Lindel\"of; 
\item For every atomic topological approach $P$-space $Z,$ $f\times Id_{Z}:X\times Z\to Y\times Z$
is closed. 
\end{enumerate}
\end{cor}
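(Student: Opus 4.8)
The plan is to read this corollary off from Corollary~\ref{cor:Dperfectprod} by specializing the composable class $\mathbb{D}$ to $\mathbb{F}_{\wedge1}$, the class of countably deep filters. I would first note that, as recalled in the introductory section, $\mathbb{F}_{\wedge1}$ is a composable class of filters containing the principal filters, so the standing hypotheses of Corollary~\ref{cor:Dperfectprod} (equivalently, of Theorem~\ref{thm:mainformaps} applied to $f^{-}$ via Proposition~\ref{prop:closedF0compact}) are satisfied, and nothing further needs to be proved about the class itself.

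It then remains to match the three conditions to the terminology fixed in the table and the surrounding discussion. First, an $\mathbb{F}_{\wedge1}$-perfect map is by definition an inversely Lindel\"of map, so conditions~(1) of the two statements agree, and so do conditions~(2), since a $P$-convergence approach space is exactly an $\mathbb{F}_{\wedge1}$-based one. For condition~(3), an atomic topological $\mathbb{F}_{\wedge1}$-based approach space is what we have named an atomic topological approach $P$-space, and by Proposition~\ref{prop:closedF0compact} the closedness of $f\times Id_Z$ is the same as $(f\times Id_Z)^{-}$ being an $\mathbb{F}_{0}$-compact relation, i.e. $f\times Id_Z$ being $\mathbb{F}_{0}$-perfect, which is precisely condition~(3) of Corollary~\ref{cor:Dperfectprod}. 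Thus the three conditions are literally the $\mathbb{D}=\mathbb{F}_{\wedge1}$ instances of those in Corollary~\ref{cor:Dperfectprod}, and the equivalence is immediate.

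I expect no genuine obstacle here: the entire content sits in Theorem~\ref{thm:mainformaps} and its Corollary~\ref{cor:Dperfectprod}, and the only thing to be careful about is the purely lexical bookkeeping of the names ``inversely Lindel\"of'', ``$P$-convergence approach space'', and ``atomic topological approach $P$-space'' against the classes $\mathbb{F}_{\wedge1}$-perfect, $\mathbb{F}_{\wedge1}$-based, and atomic topological $\mathbb{F}_{\wedge1}$-based. If one wished to spell it out rather than cite, one would unwind $\mathbb{D}$-perfect via Proposition~\ref{prop:closedF0compact} and re-run the three implications of Theorem~\ref{thm:mainformaps}, but that is exactly the proof already given at the appropriate level of generality.
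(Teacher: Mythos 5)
Your proposal is correct and is exactly how the paper obtains this corollary: it is the instance $\mathbb{D}=\mathbb{F}_{\wedge1}$ of Corollary~\ref{cor:Dperfectprod} (itself Theorem~\ref{thm:mainformaps} combined with Proposition~\ref{prop:closedF0compact}), using that $\mathbb{F}_{\wedge1}$ is composable and contains $\mathbb{F}_{0}$, together with the paper's terminological conventions (inversely Lindel\"of $=$ $\mathbb{F}_{\wedge1}$-perfect, $P$-convergence approach space $=$ $\mathbb{F}_{\wedge1}$-based). Nothing further is needed.
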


\begin{cor}
\label{cor:weaklybiq-1} Let $(X,\lambda_{X})$ and $(Y,\lambda_{Y})$
be two convergence approach spaces, and let $f:X\to Y$ be a surjective
map. Then the following are equivalent: 
\begin{enumerate}
\item $f$ is hereditarily biquotient; 
\item For every finitely generated ocnvergence approach space $Z$, $f\times Id_{Z}:X\times Z\to Y\times Z$
is hereditarily quotient;
\item For every atomic topological finitely generated approach space $Z,$
$f\times Id_{Z}:X\times Z\to Y\times Z$ is hereditarily quotient. 
\end{enumerate}
\end{cor}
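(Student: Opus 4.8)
The plan is to derive this corollary as the instance $\mathbb{D}=\mathbb{F}_0$ of the general scheme already in place, exactly as the preceding corollaries were obtained for $\mathbb{F}$, $\mathbb{F}_1$, and $\mathbb{F}_{\wedge 1}$. The key reference points are Corollary \ref{cor:Dquotientprod}, which gives, for any composable class $\mathbb{D}$ containing principal filters, the equivalence of ``$f$ is $\mathbb{D}$-quotient'', ``$f\times Id_Z$ is $\mathbb{D}$-quotient for every $\mathbb{D}$-based convergence approach space $Z$'', and ``$f\times Id_Z$ is hereditarily quotient for every atomic topological $\mathbb{D}$-based approach space $Z$''. So the first step is simply to check that $\mathbb{F}_0$ is a composable class containing principal filters: it is $\mathbb{F}_0$ itself, and composability of $\mathbb{F}_0$ is recorded explicitly in the paper (the classes $\mathbb{F}_0,\mathbb{F}_1,\mathbb{F}_{\wedge 1}$ are all composable). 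Hence Corollary \ref{cor:Dquotientprod} applies verbatim with $\mathbb{D}=\mathbb{F}_0$.

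The second step is translating terminology. By the table and the surrounding text, ``$\mathbb{F}_0$-quotient'' is by definition ``hereditarily quotient'', and an $\mathbb{F}_0$-based convergence approach space is what the paper calls ``finitely generated''. Plugging these names into the three clauses of Corollary \ref{cor:Dquotientprod} for $\mathbb{D}=\mathbb{F}_0$ yields precisely: $f$ is hereditarily quotient $\iff$ for every finitely generated convergence approach space $Z$, $f\times Id_Z$ is hereditarily quotient $\iff$ for every atomic topological finitely generated approach space $Z$, $f\times Id_Z$ is hereditarily quotient. This matches the statement, up to the typo ``ocnvergence'' and the fact that the statement labels the property ``hereditarily biquotient'' in clause (1) rather than ``hereditarily quotient''; here I would note that in this convention ``hereditarily biquotient'' is the name for the $\mathbb{F}_0$-analogue of the biquotient/countably biquotient/weakly biquotient hierarchy, i.e. it \emph{is} the hereditarily quotient notion, so the equivalence is literally clause-by-clause the $\mathbb{D}=\mathbb{F}_0$ reading of Corollary \ref{cor:Dquotientprod}.

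There is essentially no obstacle: the mathematical content is entirely carried by Corollary \ref{cor:Dquotientprod} (hence ultimately by Theorem \ref{thm:mainformaps}, Theorem \ref{th:Dquotient}, and Theorem \ref{thm:mainproduct}), and the only thing to verify by hand is the membership $\mathbb{F}_0\in\{\text{composable classes}\ni\mathbb{F}_0\}$, which is immediate. The one place requiring a sentence of care is making sure that ``atomic topological $\mathbb{F}_0$-based approach space'' in Corollary \ref{cor:Dquotientprod} coincides with ``atomic topological finitely generated approach space'' in the target statement, and that a finitely generated atomic topological approach space is the same as what one would expect from \cite{Lowen97}; but the paper has already fixed this vocabulary. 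So the proof is the single line: apply Corollary \ref{cor:Dquotientprod} with $\mathbb{D}=\mathbb{F}_0$, noting that $\mathbb{F}_0$ is composable and contains principal filters, and unwind the terminology (hereditarily biquotient $=$ $\mathbb{F}_0$-quotient, finitely generated $=$ $\mathbb{F}_0$-based).
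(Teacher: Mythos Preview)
Your proposal is correct and matches exactly what the paper does: this corollary is simply the instance $\mathbb{D}=\mathbb{F}_{0}$ of Corollary \ref{cor:Dquotientprod}, with the terminology (``hereditarily quotient'' $=\mathbb{F}_{0}$-quotient, ``finitely generated'' $=\mathbb{F}_{0}$-based) read off from the table. The paper gives no separate proof, and your observation that ``hereditarily biquotient'' in clause (1) is just the $\mathbb{F}_{0}$-quotient notion (i.e., hereditarily quotient) is the right way to reconcile the wording.
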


\begin{cor}
Let $(X,\lambda_{X})$ and $(Y,\lambda_{Y})$ be two convergence approach
spaces, and let $f:X\to Y$. Then the following are equivalent: 
\begin{enumerate}
\item $f$ is closed; 
\item For every finitely generated convergence approach space $Z$, $f\times Id_{Z}:X\times Z\to Y\times Z$
is closed; 
\item For every atomic finitely generated topological approach space $Z,$
$f\times Id_{Z}:X\times Z\to Y\times Z$ is closed. 
\end{enumerate}
\end{cor}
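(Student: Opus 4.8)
The plan is to deduce this statement as the special case $\mathbb{D}=\mathbb{F}_{0}$ of Corollary \ref{cor:Dperfectprod}; essentially no new argument is required beyond unwinding the terminology. First I would note that $\mathbb{F}_{0}$, the class of principal filters, is a composable class containing principal filters (as recalled in the preliminaries), so Corollary \ref{cor:Dperfectprod} applies with $\mathbb{D}=\mathbb{F}_{0}$.

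Next I would match the three conditions via the dictionary ``$\mathbb{F}_{0}$-perfect $=$ closed'' and ``$\mathbb{F}_{0}$-based $=$ finitely generated''. For (1): by Proposition \ref{prop:closedF0compact} together with the definition of $\mathbb{D}$-perfect map, ``$f$ is $\mathbb{F}_{0}$-perfect'' means precisely that $f^{-}$ is an $\mathbb{F}_{0}$-compact relation, i.e.\ that $f$ is closed. For (2): an $\mathbb{F}_{0}$-based convergence approach space is exactly what we have called a finitely generated space, and by Proposition \ref{prop:closedF0compact} again ``$f\times Id_{Z}$ is $\mathbb{F}_{0}$-perfect'' means ``$f\times Id_{Z}$ is closed'', so condition (2) of Corollary \ref{cor:Dperfectprod} becomes condition (2) here. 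For (3): an atomic topological $\mathbb{F}_{0}$-based approach space is precisely an atomic finitely generated topological approach space (its unique non-isolated point has a smallest neighborhood), and the conclusion ``$f\times Id_{Z}$ is closed'' is already phrased that way in Corollary \ref{cor:Dperfectprod}(3); so condition (3) there is word-for-word condition (3) here.

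Assembling these identifications, Corollary \ref{cor:Dperfectprod} with $\mathbb{D}=\mathbb{F}_{0}$ is exactly the asserted equivalence, and the proof reduces to a single line. The only point requiring (minimal) care is the bookkeeping: one must check that each of the three occurrences of the dictionary is legitimate, which it is by Proposition \ref{prop:closedF0compact} and the definition of finitely generated convergence approach spaces recalled just before the corollary. There is no genuine obstacle here; the substance of the result is entirely contained in Theorem \ref{thm:mainformaps} and Corollary \ref{cor:Dperfectprod}.
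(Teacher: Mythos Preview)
Your proposal is correct and matches the paper's approach exactly: this corollary is simply the instance $\mathbb{D}=\mathbb{F}_{0}$ of Corollary~\ref{cor:Dperfectprod}, using Proposition~\ref{prop:closedF0compact} for the identification ``$\mathbb{F}_{0}$-perfect $=$ closed'' and the definition ``$\mathbb{F}_{0}$-based $=$ finitely generated''. The paper does not supply a separate argument either; it lists the result among the specializations obtained by letting $\mathbb{D}$ range over $\mathbb{F}$, $\mathbb{F}_{1}$, $\mathbb{F}_{\wedge1}$, $\mathbb{F}_{0}$.
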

Finally, let us note that applying Theorem \ref{thm:mainproduct}
for $A=\{x\}$ and $B=\{y\}$, yields, via Theorem \ref{th:AdhJarecompact},
the following extension to $\mathbf{Cap}$ of \cite[Theorem 8]{myn.applofcompact}:
\begin{cor}
\label{cor:reflector}Let $\mathbb{D}$ be a composable class of filters
containing principal filters. Let $(X,\lambda_{X})$ be a convergence
approach space and let $\lambda_{2}$ be another convergence approach
structure on $X$. The following are equivalent:
\begin{enumerate}
\item $\lambda_{2}\geq\Adh_{\mathbb{D}}\lambda_{X}$;
\item For every $\mathbb{D}$-based convergence approach space $(Y,\lambda_{Y})$,
\[
\Adh_{\mathbb{D}}(\lambda_{X}\times\lambda_{Y})\leq\lambda_{2}\times\Adh_{\mathbb{F}}\lambda_{Y}
\]

\item For every $\mathbb{D}$-based atomic topological approach space $(Y,\lambda_{Y})$,
\[
\Adh_{\mathbb{F}_{0}}(\lambda_{X}\times\lambda_{Y})\leq\lambda_{2}\times\lambda_{Y}.
\]

\end{enumerate}
\end{cor}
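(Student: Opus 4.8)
The plan is to rewrite the three conditions in terms of the measure $c_{\mathbb{D}}$ via Theorem~\ref{th:AdhJarecompact}, and to prove the cycle $(1)\then(2)\then(3)\then(1)$. Recall that the product structure in $\mathbf{Cap}$ satisfies $(\mu\times\nu)(\mathcal{H})(x,y)=\mu(p_X[\mathcal{H}])(x)\vee\nu(p_Y[\mathcal{H}])(y)$, so that by Theorem~\ref{th:AdhJarecompact} condition $(1)$ reads $c_{\mathbb{D}}^{\{x\}}(\F)\le\lambda_2(\F)(x)$ for all $\F\in\mathbb{F}X$ and $x\in X$, while $(2)$ and $(3)$ become the inequalities $c_{\mathbb{D}}^{\{(x,y)\}}(\mathcal{H})\le\lambda_2(p_X[\mathcal{H}])(x)\vee(\Adh_{\mathbb{F}}\lambda_Y)(p_Y[\mathcal{H}])(y)$ and $c_{\mathbb{F}_0}^{\{(x,y)\}}(\mathcal{H})\le\lambda_2(p_X[\mathcal{H}])(x)\vee\lambda_Y(p_Y[\mathcal{H}])(y)$, for every filter $\mathcal{H}$ on $X\times Y$ and every $(x,y)$, where $\{(x,y)\}=\{x\}\times\{y\}$.

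The implication $(2)\then(3)$ is immediate: since $\mathbb{F}_0\subseteq\mathbb{D}$ we have $\Adh_{\mathbb{F}_0}\le\Adh_{\mathbb{D}}$, and since $\Adh_{\mathbb{F}}$ is a reflection, $\Adh_{\mathbb{F}}\lambda_Y\le\lambda_Y$; chaining these gives $\Adh_{\mathbb{F}_0}(\lambda_X\times\lambda_Y)\le\Adh_{\mathbb{D}}(\lambda_X\times\lambda_Y)\le\lambda_2\times\Adh_{\mathbb{F}}\lambda_Y\le\lambda_2\times\lambda_Y$, in particular over $\mathbb{D}$-based atomic topological approach spaces. For $(3)\then(1)$, fix $\F\in\mathbb{F}X$ and $x\in X$; we may assume $\alpha:=\lambda_2(\F)(x)<\infty$. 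For any $\mathbb{D}$-based atomic topological approach space $Y$ with non-isolated point $\infty$, apply $(3)$ to the filter $\mathcal{H}=\F\times\N(\infty)$ at the point $(x,\infty)$: since $p_X[\mathcal{H}]=\F$, $p_Y[\mathcal{H}]=\N(\infty)$ and $\lambda_Y(\N(\infty))(\infty)=0$, this gives $c_{\mathbb{F}_0}^{\{x\}\times\{\infty\}}(\F\times\N(\infty))\le\alpha$. As this holds for every such $Y$, Theorem~\ref{thm:mainproduct} in the form $(3)\then(1)$, applied with $A=\{x\}$, yields $c_{\mathbb{D}}^{\{x\}}(\F)\le\alpha$, i.e.\ $(1)$.

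The implication $(1)\then(2)$ is where the real work lies. Fix a $\mathbb{D}$-based space $Y$, a filter $\mathcal{H}$ on $X\times Y$, and $(x,y)$; set $\F=p_X[\mathcal{H}]$, $\G=p_Y[\mathcal{H}]$ and $\alpha=\lambda_2(\F)(x)$, which we may assume finite, so that $(1)$ gives $c_{\mathbb{D}}^{\{x\}}(\F)\le\alpha$. Since $\mathcal{H}$ is finer than $\F\times\G$, antitonicity of $c_{\mathbb{D}}$ in its filter argument gives $c_{\mathbb{D}}^{\{(x,y)\}}(\mathcal{H})\le c_{\mathbb{D}}^{\{x\}\times\{y\}}(\F\times\G)$. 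Now for each $\D\in\mathbb{D}(Y)$ with $\D\le\G$, Theorem~\ref{thm:mainproduct} in the form $(1)\then(2)$, applied with $A=\{x\}$ and $B=\{y\}$, yields $c_{\mathbb{D}}^{\{x\}\times\{y\}}(\F\times\D)\le\alpha\vee c_{\mathbb{F}}^{\{y\}}(\D)$, and since $\F\times\D\le\F\times\G$ this also bounds $c_{\mathbb{D}}^{\{x\}\times\{y\}}(\F\times\G)$. Taking the infimum over all $\D\in\mathbb{D}(Y)$ with $\D\le\G$ and distributing $\vee$ over $\wedge$, one is left to identify $\bigwedge_{\mathbb{D}\ni\D\le\G}c_{\mathbb{F}}^{\{y\}}(\D)$.

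I expect the main obstacle to be exactly this last step: one wants this infimum to equal $c_{\mathbb{F}}^{\{y\}}(\G)=(\Adh_{\mathbb{F}}\lambda_Y)(\G)(y)$, whereas $\mathbb{D}$-basedness of $Y$ together with $c_{\mathbb{F}}^{\{y\}}(\D)=(\Adh_{\mathbb{F}}\lambda_Y)(\D)(y)$ only yields, a priori, the possibly larger $(\Base_{\mathbb{D}}\Adh_{\mathbb{F}}\lambda_Y)(\G)(y)$. The point I would exploit is that $c_{\mathbb{D}}^{\{(x,y)\}}(\mathcal{H})$ depends on $\lambda_Y$ only through its restriction to ultrafilters — where $\lambda_Y$ and $\Adh_{\mathbb{F}}\lambda_Y$ agree — so that one may replace $\lambda_Y$ by the pseudo-approach structure $\rho:=\Adh_{\mathbb{F}}\lambda_Y$ from the outset; it then remains to show that for a $\mathbb{D}$-based $Y$ the structure $\rho$ is again $\mathbb{D}$-based, so that $\bigwedge_{\mathbb{D}\ni\D\le\G}\rho(\D)(y)=\rho(\G)(y)$, and here the composability of $\mathbb{D}$ should be the decisive hypothesis. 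Granting this, $c_{\mathbb{D}}^{\{(x,y)\}}(\mathcal{H})\le\alpha\vee(\Adh_{\mathbb{F}}\lambda_Y)(\G)(y)=(\lambda_2\times\Adh_{\mathbb{F}}\lambda_Y)(\mathcal{H})(x,y)$, which is $(2)$, closing the cycle.
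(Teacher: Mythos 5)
Your translation of (1)--(3) via Theorem \ref{th:AdhJarecompact} and the product formula is correct, and your arguments for $(2)\then(3)$ and $(3)\then(1)$ are sound and are essentially the intended application of Theorem \ref{thm:mainproduct} with $A=\{x\}$, $B=\{y\}$. The genuine gap is in $(1)\then(2)$, exactly where you flag it: after taking the infimum you are left with $\bigwedge_{\mathbb{D}\ni\D\leq\G}c_{\mathbb{F}}^{\{y\}}(\D)=(\Base_{\mathbb{D}}\Adh_{\mathbb{F}}\lambda_{Y})(\G)(y)$, and your proposed repair --- replace $\lambda_{Y}$ by $\rho:=\Adh_{\mathbb{F}}\lambda_{Y}$ (which is legitimate, since the measures on the product only see $\lambda_{Y}$ on ultrafilters) and then claim that $\rho$ is again $\mathbb{D}$-based --- is not proved but only conjectured (``composability should be the decisive hypothesis'', ``granting this''). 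That claim is not a formal consequence of composability: it asserts that the \textbf{Psap}-reflector commutes with $\Base_{\mathbb{D}}$, which already in the $\mathbf{Conv}$ case $\mathbb{D}=\mathbb{F}_{1}$ would say that the pseudotopological modification of every first-countable convergence is first-countable. That is a gluing statement (a single $\mathbb{D}$-filter coarser than $\G$ must witness at once the convergence of \emph{all} ultrafilters of $\G$), and nothing in the hypotheses supplies it; so the central implication remains unproved as written.

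The statement is nevertheless true, and the fix is to use $\mathbb{D}$-basedness of $\lambda_{Y}$ pointwise, adapted to each meshing $\mathbb{D}$-filter, by rerunning the meshing argument of the proof of Theorem \ref{thm:mainproduct} rather than quoting its statement. Fix $\D\in\mathbb{D}(X\times Y)$ with $\D\#\H$, and set $\F:=p_{X}[\H]$, $\G:=p_{Y}[\H]$, so that $\D\#(\F\times\G)$ and hence $\G\#\D[\F]$ by (\ref{eq:productmeshgeneral}). Choose an ultrafilter $\V$ finer than both $\G$ and $\D[\F]$; then $\lambda_{Y}(\V)(y)\leq(\Adh_{\mathbb{F}}\lambda_{Y})(\G)(y)$ because $\V\in\mathbb{U}(\G)$, and $\mathbb{D}$-basedness of $\lambda_{Y}$ yields $\D'\in\mathbb{D}(Y)$ with $\D'\leq\V$ and $\lambda_{Y}(\D')(y)\leq\lambda_{Y}(\V)(y)+\epsilon$. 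Since $\D'$ and $\D[\F]$ are both coarser than $\V$ they mesh, so $\F\#\D^{-}[\D']$ and $\D^{-}[\D']\in\mathbb{D}(X)$ by composability; condition (1) then gives $\adh_{X}\D^{-}[\D'](x)\leq c_{\mathbb{D}}^{\{x\}}(\F)\leq\lambda_{2}(\F)(x)$, hence an ultrafilter $\U\in\mathbb{U}(\D^{-}[\D'])$ with $\lambda_{X}(\U)(x)\leq\lambda_{2}(\F)(x)+\epsilon$. Finally $(\U\times\D')\#\D$, and any ultrafilter of $\D$ finer than $\U\times\D'$ projects onto $\U$ and onto a filter finer than $\D'$, so $\adh_{X\times Y}\D(x,y)\leq(\lambda_{2}(\F)(x)+\epsilon)\vee(\lambda_{Y}(\V)(y)+\epsilon)$; taking the supremum over such $\D$ and letting $\epsilon\to0$ gives $c_{\mathbb{D}}^{\{(x,y)\}}(\H)\leq\lambda_{2}(\F)(x)\vee(\Adh_{\mathbb{F}}\lambda_{Y})(\G)(y)$, which is (2). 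Note that in this argument $\mathbb{D}$-basedness is applied to ultrafilters of $\G$ chosen in dependence on $\D$, which is strictly weaker than requiring $\Adh_{\mathbb{F}}\lambda_{Y}$ itself to be $\mathbb{D}$-based; also note that your weaker chain (stopping at $\lambda_{Y}(\D)(y)$ instead of $c_{\mathbb{F}}^{\{y\}}(\D)$) would only prove the inequality with $\lambda_{Y}$, not $\Adh_{\mathbb{F}}\lambda_{Y}$, on the right of (2).
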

The significance of this type of results appears fully in the context
of \emph{modified duality }as developed in \cite{mynard,Mynard.survey}.
For instance, when $\mathbb{D}$ is the class of all filters, $(1\then2)$
simply shows that the reflector on pseudo-approach spaces $\Adh_{\mathbb{F}}$
communtes with (finite) products. As a result $\mathbf{Psap}$ is
cartesian-closed. More importantly, since $\Adh_{\mathbb{F}_{0}}$
is the projector on $\mathbf{Prap}$, $(3\then1)$ shows (see \cite{mynard,Mynard.survey}
for details) that $\mathbf{Psap}$ is the cartesian-closed hull of
$\mathbf{Prap}$, which is \cite[Theorem 5.9]{lowe88}. See the aforementioned
references for details and other applications of results akin to Corollary
\ref{cor:reflector}.

\bibliographystyle{plain}

\end{document}